%
%
%
%
%

\documentclass[11pt]{amsart}
\usepackage{amsmath,amscd,amssymb,amsfonts,amsthm}

\textwidth15.8 cm
\oddsidemargin.4cm
\evensidemargin.4cm
%
\usepackage[cmtip,matrix,arrow]{xy}


\newtheorem{theorem}{Theorem}[section]
\newtheorem{corollary}[theorem]{Corollary}
\newtheorem{proposition}[theorem]{Proposition}

\newtheorem{lemma}[theorem]{Lemma}
\theoremstyle{definition}
\newtheorem{definition}[theorem]{Definition}
\theoremstyle{remark}

\newtheorem{remark}[theorem]{Remark}

\newtheorem{example}[theorem]{Example}

%

\renewcommand{\AA}{\mathbb{A}}
\newcommand{\Cour}[1]      {[\![#1]\!]}

\newcommand\G{\mathcal{G}}

\renewcommand{\L}{\mathcal{L}}

\renewcommand{\O}{\mathcal{O}}
\newcommand{\T}{\mathbb{T}}
\newcommand{\J}{\mathcal{J}}

\newcommand{\ca}{\mathcal}

\newcommand{\E}{\ca{E}}

\newcommand{\R}{\mathbb{R}}

\newcommand{\Z}{\mathbb{Z}}

\newcommand\pt{\on{pt}}
\renewcommand{\P}{\mathsf{P}}

\newcommand\lie[1]{\mathfrak{#1}}

\newcommand{\h}{\lie{h}}
\newcommand{\g}{\lie{g}}

\renewcommand{\a}{\mathsf{a}}

\newcommand{\on}{\operatorname}
\newcommand{\Aut}{ \on{Aut} }

\newcommand{\ad}{\on{ad}}

\renewcommand{\subset}{\subseteq}

\renewcommand{\ker}{ \on{ker}}



\newcommand\qu{/\kern-.7ex/} 
\newcommand{\lra}{\longrightarrow}
\newcommand{\hra}{\hookrightarrow}

\renewcommand{\d}{{\mbox{d}}}
\newcommand{\ol}{\overline}

\newcommand{\dd}{\mf{d}}

\newcommand\Om{\Omega}

\newcommand{\f}{\frac}

\newcommand{\ran}{\on{ran}}
\newcommand{\p}{\partial}
\renewcommand{\l}{\langle}
\renewcommand{\r}{\rangle}
\newcommand\hh{{\f{1}{2}}}

\newcommand{\eeq}{\end{eqnarray*}}
\newcommand{\beq}{\begin{eqnarray*}}

\newcommand{\D}{\ca{D}}

\newcommand{\pr}{\on{pr}}
\newcommand{\wh}{\widehat}
\newcommand{\wt}{\widetilde}
\newcommand{\mf}{\mathfrak}

\newcommand{\rra}{\rightrightarrows}

\newcommand{\aut}{\mf{aut}}

\newcommand{\da}{\dasharrow}


\setcounter{tocdepth}{1}

%
%

\begin{document}
\sloppy
\title[Normal forms around transversals]{Deformation spaces\\ and  normal forms around transversals}
\author{Francis Bischoff}
\address{Department of Mathematics, University of Toronto, 40 St. George Street, Toronto, Ontario,
M5S 2E4}
\email{bischoff@math.toronto.edu}
\email{mein@math.toronto.edu}

\author{Henrique Bursztyn}
\address{IMPA, Estrada Dona Castorina 110, Rio de Janeiro, 22460-320, Brazil.}
\email{henrique@impa.br}

\author{Hudson Lima}
\address{Departamento de Matem\'atica - UFPR,
Centro Polit\'ecnico, Curitiba, 81531-980, Brazil.}
\email{hudsonlima@ufpr.br}

\author{Eckhard Meinrenken}

\begin{abstract}
Given a manifold $M$ with a submanifold $N$, the deformation space $\D(M,N)$ is a manifold with a submersion to $\R$ whose zero fiber is the normal bundle $\nu(M,N)$, and all other fibers are
equal to $M$. This article uses deformation spaces to study the local behavior of various geometric structures associated with singular foliations, with $N$ a submanifold transverse to the foliation. New examples include
$L_\infty$-algebroids, Courant algebroids, and Lie bialgebroids.
In each case, we obtain a normal form theorem around $N$, in terms of a model structure over
$\nu(M,N)$.
\end{abstract}
\maketitle

\section{Introduction}\label{sec:intro}
This article studies the local behavior of various types of geometric structures associated with singular foliations. While much is known about this problem e.g. for Poisson manifolds and Lie algebroids, this paper develops new methods to handle more elaborate situations,  including $L_\infty$-algebroids, Courant algebroids and Lie bialgebroids.

Given a submanifold transverse to the singular foliation, we will prove
normal form theorems identifying the given geometric structure around the submanifold with its linear approximation on the normal bundle. These results imply \emph{local splitting theorems}: Given a leaf
$S\subset M$ of the foliation and a point $m \in S$,  any `small' transversal of complementary dimension passing through $m$ inherits a `transverse structure' for which $m$ is a critical point (in the sense that the transverse structure has $\{m\}$ as a leaf), and the given geometric structure is locally a direct product of the transverse structure and a `trivial' structure along the leaf. More concretely:
\begin{itemize}
\item Any $L_\infty$-algebroid is locally $L_\infty$-isomorphic, near $m\in M$, to a direct product of the tangent bundle $TS$ to the leaf through $m$ and a `transverse' $L_\infty$-algebroid having $m$ as a critical point.

\item Any Courant algebroid is locally isomorphic, near $m\in M$, to a direct product of the standard Courant algebroid $TS\oplus T^*S$ over the leaf through $m$ and a `transverse' Courant algebroid having $m$ as a critical point.

\item Any Lie bialgebroid is locally isomorphic, near $m\in M$,  to a direct product of the standard Lie bialgebroid associated with the symplectic structure on the leaf through $m$ and a `transverse' Lie bialgebroid having $m$ as a critical point.
\end{itemize}
In full generality, our normal form theorems extend these results to neighborhoods of arbitrary transversals.
Our techniques also give new proofs of previously known results of similar nature e.g.
for Poisson manifolds \cite{fre:nor,wei:loc} and Lie algebroids  \cite{bur:spl,duf:nor,fer:con,wei:alm}, and we expect that they apply to many other settings such as Jacobi manifolds, Nijenhuis structures, and so on.

To describe our method in more detail, let $\mf{S}$ denote the geometric structure of interest, determining a singular foliation of the given manifold $M$. Let $N \subseteq M$ be a submanifold 
 transverse to the foliation. We start by defining a  \emph{linear approximation} to $\mf{S}$ along $N$, which is a structure $\nu(\mf{S})$ over the normal bundle $\nu(M,N)$. The desired result is an isomorphism $\tilde{\psi}\colon  \nu(\mf{S}) \to \mf{S}$ covering a tubular neighborhood embedding $\psi \colon \nu(M,N) \to M$.  To prove such a result, our strategy is to construct an interpolating family between the geometric structure $\mf{S}$ and its linear approximation $\nu(\mf{S})$, and then to show that all the members of the family are isomorphic.

The main ingredient to make precise the idea of an interpolating family is the \emph{deformation space} $\D(M,N)$ \cite{hil:mor}.
Recall that this is a manifold with a surjective submersion $\pi\colon \D(M,N)\to \R$, with fibers
\[ \pi^{-1}(t)=\begin{cases} M & \ \ t\neq 0,\\ \nu(M,N) &\ \  t=0.\end{cases}\]
To visualize the deformation space, one may think of the directions normal to $N$ as being `magnified' when $t\to 0$. An important observation is that for the cases at hand, there is a natural
lift $\D(\mf{S}) \to \D(M,N)$ of the geometric structure $\mf{S}$ on $M$ to the deformation space, in such a way that  its pullback
to the fibers $\pi^{-1}(t)$ are given by
\[ \D(\mf{S})|_{\pi^{-1}(t)}=\begin{cases} \mf{S} & \ \ t\neq 0,\\ \nu(\mf{S}) &\ \  t=0.\end{cases}\]
The key step towards the normal form result is to produce an Ehresmann connection on this family of structures, given by an infinitesimal automorphism of
$\D(\mf{S})$ `lifting' the coordinate vector field on $\mathbb{R}$. The desired isomorphism between the fibres of $\D(\mf{S})$ is then given by the parallel transport of this connection. We will construct these connections by means of
\emph{Euler-like sections} of $\mf{S}$, which are lifts of \emph{Euler-like vector fields} on $M$ with respect
to $N$ (that is, complete vector fields vanishing along $N$ and with linear approximation the Euler vector field on $\nu(M,N)$).

Our approach in this paper  was inspired by the work of Haj Saeedi Sadegh  and Higson \cite{hig:eul}, who explained the geometric significance of the fundamental lemma in \cite{bur:spl} asserting that germs of Euler-like vector fields along $N$ are in
1-1 correspondence with germs of tubular neighborhood embeddings $\psi\colon \nu(M,N)\to M$. Viewing the deformation space as an interpolating family between $M$ and the normal bundle of $N$, Haj Saeedi Sadegh and Higson found that every Euler-like vector field along $N$ canonically determines a vector field $W$ on the deformation space. This vector field $W$ lifts the coordinate vector field on $\mathbb{R}$, and its flow induces the corresponding tubular neighbourhood embedding. The main insight of the present paper is that this picture generalizes to a powerful method to prove linearization results for a wide class of geometric structures, including those treated in \cite{bur:spl}, but going much further.

It is interesting to ask for the most general framework in which our methods are applicable. For example, one
might anticipate a general unifying normal form result around transversals in graded geometry or higher stacks with shifted symplectic forms.

\bigskip

The paper is organized as follows. Section \ref{sec:euler} reviews deformation spaces, Euler-like vector fields, and tubular neighborhood embeddings. As an application, we prove a normal form theorem for singular foliations, which generalizes an integrability result of Hermann. In Section \ref{sec:splitla}, we use the deformation space approach to prove a general splitting theorem for anchored vector bundles and Lie algebroids -- this recovers results obtained in \cite{bur:spl}, but the new viewpoint is needed to set the stage for the more complicated structures treated in the subsequent sections. Section \ref{sec:splitinfinity} generalizes the results for Lie algebroids to $L_\infty$-algebroids.  In Section \ref{sec:splitcourant} we prove a normal form theorem for Courant algebroids, as well as for Dirac structures therein. Section~\ref{sec:localsplit} discusses the local structure of Courant algebroids and linear approximations in transverse directions. Finally, Section \ref{sec:splitliebialgebroid} is concerned with a normal form theorem for Lie bialgebroids. Appendix \ref{app:coord} gives coordinate descriptions of deformation spaces, and in  Appendix \ref{app:Moser} we prove a Moser lemma for Lie bialgebroids (needed in Section \ref{sec:splitliebialgebroid}).

\bigskip
\noindent{\bf Acknowledgment.} We are grateful to Nigel Higson, Marco Gualtieri, Alfonso Garmendia and Marco Zambon
for fruitful discussions.
F.B. is supported by an NSERC CGS Doctoral Award. H.B and H.L. thank CNPq, Capes and Faperj for financial support. E.M. is supported by an NSERC Research Grant.
\bigskip

\section{Euler-like vector fields}\label{sec:euler}

This section reviews the relationship between Euler-like vector fields and tubular neighborhood embeddings, which is our main tool to prove splitting theorems. We will mostly follow \cite[Sec.~2]{bur:spl}, but with a different viewpoint based on {\em deformation spaces} from \cite{hig:eul}.
\subsection{Conventions} \label{subsec:convent}
We regard vector fields $X\in\mf{X}(M)$ as derivations $f\mapsto \L_Xf\equiv X(f)$ of the algebra $C^\infty(M)$.
We denote by $\varphi^X_s,\ s\in\R$ the (local) flow,
with the sign convention $\L_X=\f{\p}{\p s}|_{s=0}(\varphi_{-s}^X)^*$ as operators on
functions. For example, the flow of $X=\f{\p}{\p t}$ on $\R$ is given by
$\varphi_s^X(t)=t-s$. Given a vector bundle $E\to M$, we denote by $\mf{aut}_{\ca{VB}}(E)$
the infinitesimal vector bundle automorphisms, consisting of vector fields $\wt{X}\in\mf{X}(E)$ whose (local) flow is by vector bundle automorphisms. By \cite{gra:hig}, these are exactly the vector fields homogeneous of degree $0$ under the scalar multiplication.
As a consequence of the homogeneity, the domain of definition of $\varphi^{\wt{X}}_s$ for a given $s$ is simply the domain of definition of $\varphi_s^X$, where $X$ is the restriction of
$\wt{X}$ to the zero section. In particular, $\wt{X}$ is complete if and only if $X$ is complete.

A vector bundle automorphism $\wt{\varphi}$ of $E$, covering $\varphi: M\to M$, acts on the space of sections $\Gamma(E)$ by $\wt{\varphi}\cdot \sigma = \wt{\varphi}\circ \sigma \circ \varphi^{-1}$. We can use this action to see that
infinitesimal automorphisms $\wt{X}\in \mf{aut}_{\ca{VB}}(E)$ are equivalently expressed as linear operators $D$ on $\Gamma(E)$ for which there exists $X\in\mf{X}(M)$ with
\begin{equation}\label{eq:leibnitz}
D(f\sigma)=f D(\sigma)+(\L_X f)\,\sigma,
\end{equation}
for all $\sigma\in \Gamma(E)$ and $f\in C^\infty(M)$; here $X$, called the {\em symbol} of $D$, is uniquely determined (as the restriction of the corresponding $\wt{X}\in\mf{aut}_{\ca{VB}}(E)$ to the zero section). The relation between $\wt{X}$ and $D$ is
$D (\sigma) =\frac{d}{ds}\big |_{s=0} \varphi^{\wt{X}}_s \cdot \sigma$.

\subsection{Deformation spaces}\label{subsec:def}
In the $C^\infty$-category, the deformation space of a manifold $M$ with respect to a submanifold $N$  (also called the \emph{deformation to the normal cone}) was introduced by Hilsum-Skandalis in \cite[Section 3.1]{hil:mor}, as a generalization of Connes' tangent groupoid \cite{con:non}. The analogous construction in algebraic geometry goes back to work  by MacPherson and Verdier in the 1970s; see the historical comments in \cite[Chapter 4]{ful:int}. Other references for  the material in this section include \cite{hig:eul, hig:tan,kas:ma,wei:hei}.

For any pair $(M,N)$ consisting of a manifold $M$ and a submanifold $i\colon  N\hookrightarrow M$, let $\nu(M,N)=TM|_N/TN$
be the normal bundle:
\begin{equation}\label{eq:pi}
 \xymatrixcolsep{10pt}
\xymatrixrowsep{18pt}
\xymatrix{ \nu(M,N) \ar[d]_p & \\ N\ar[r]_i & M. }
\end{equation}
We denote by $\E\in \mf{X}(\nu(M,N))$ the Euler vector field on $\nu(M,N)$, characterized as the unique vector field such that $\L_\E f=f$ for all linear functions $f\in C^\infty(\nu(M,N))$.

The \emph{deformation space} for $(M,N)$ is a manifold $\D(M,N)$, with a decomposition into two submanifolds:
\begin{equation}\label{eq:decomp}
 \D(M,N)=\nu(M,N)\sqcup (M\times \R^\times).\end{equation}
The manifold structure on $\D(M,N)$ is uniquely determined by the following properties:
\begin{itemize}
\item[(i)] The map \[ \pi\colon \D(M,N)\to \R,\] given by the constant map to $0$ on $\nu(M,N)$ and by $(m,t)\mapsto t$
on $M\times \R^\times$, is a smooth submersion.
\item[(ii)] The map \[ \kappa\colon \D(M,N)\to M,\]
given on $M\times \R^\times$ by $(m,t)\mapsto m$ and on $\nu(M,N)$ by
$i\circ p$, is  smooth.
\item[(iii)] For any function $f\in C^\infty(M)$ with $f|_N=0$, the function $\wt{f}\colon \D(M,N)\to \R$, given on $M\times \R^\times$ by $(m,t)\mapsto \f{1}{t}f(m)$ and on $\nu(M,N)$ by $(v\!\mod TN)\mapsto v(f)$, is  smooth.
\end{itemize}
Intuitively, one may think of the fibers $\pi^{-1}(t)\cong M$ for $t\neq 0$ as being
`stretched in directions normal to $N$' as $t\to 0$, so that the fiber over $0$ is the normal bundle itself. By abuse of notation, we will denote the map $\pi$, regarded as an element of
$C^\infty(\D(M,N))$, simply by `$t$'. The algebra of smooth functions of $\D(M,N)$
is generated by this function $t$, together with functions of the form $\kappa^*f$ for $f\in C^\infty(M)$ and functions $\wt{f}$ for $f|_N=0$. One can use these three types of functions
to define local coordinates on the deformation space; see Appendix~\ref{app:coord}. We will use the notation
\begin{equation} j_t\colon \pi^{-1}(t)\to \D(M,N)\end{equation}
for the inclusions of the fibers of $\pi$; for $t\neq 0$ this may be viewed as an inclusion of $M$.
For $t=0$ we will write
$$
j=j_0\colon \nu(M,N)\to \D(M,N).
$$

We list some properties of the deformation space, which are easily verified using the definition, or using local coordinates (see Appendix~\ref{app:coord}):
\begin{enumerate}
\item\label{it:i} For any morphism $\varphi\colon (M_1,N_1)\to (M_2,N_2)$ (that is, a smooth map
$\varphi\colon M_1\to M_2$ taking $N_1$ into $N_2$) the map $\varphi\times \on{id}\colon M_1\times \R^\times \to M_2\times \R^\times$ extends to a smooth map
\[ \D(\varphi)\colon \D(M_1,N_1)\to \D(M_2,N_2).\]
Its restriction to the zero fiber is the natural map $\nu(\varphi)\colon \nu(M_1,N_1)\to \nu(M_2,N_2)$.
\item\label{it:ii}
For a vector bundle $V\to N$, the map $V\times \R^\times \to V\times \R$,
$(v,t)\mapsto (\f{1}{t}v,t)$, extends to a global diffeomorphism
\[ \D(V,N)\to V\times \R.\]
Its restriction to the zero fiber is the standard identification $ \nu(V,N)\to V$.
\item\label{it:iii}
There is a unique vector field $\Theta\in\mf{X}(\D(M,N))$ such that
\[\L_\Theta t=t,\ \ \L_\Theta \kappa^*f=0,\ \ \L_\Theta \wt{f}=-\wt{f}\ \ \mbox{(for $f|_N=0$).}
\]
This vector field is tangent to the zero fiber, with
$-\E\sim_j \Theta$, while its restriction to $M\times\R^\times$ is the vector field $t\f{\p}{\p t}$.
It generates the $\R^\times$-action on $\D(M,N)$, where $a \in \R^\times$ acts
on $M\times \R^\times$ by $a\cdot
(m,t)=(m,at)$ and on
$\nu(M,N)$ by scalar multiplication by $a^{-1}$.

\item \label{it:v}
Every vector field $Y\in\mf{X}(M)$ determines a vector field
$\widehat{Y}$ on $\D(M,N)$ with the properties
\[ \L_{\widehat{Y}}t=0,\ \ \ \ \L_{\widehat{Y}}\kappa^*(f)=t\kappa^*(\L_Yf), \ \ \ \
\L_{\widehat{Y}}\wt{f}=\kappa^*\L_{Y}f\ \mbox{(for $f|_N=0$). }\]
This vector field is vertical for the submersion $\pi$; its restriction to the zero fiber
 is the equivalence class  $[Y|_N]\in \Gamma(\nu(M,N))$, regarded as a fiberwise constant vector field on $\nu(M,N)$, while the restriction to $\pi^{-1}(t)=M$ for $t\neq 0$ is $tY$.
 \item\label{it:iv}
 If $Y\in\mf{X}(M)$ is tangent to $N$, then the vector field in \eqref{it:v} vanishes along the zero fiber, and hence is divisible by $t$. The vector field $\D(Y)=t^{-1}\wh{Y}\in\mf{X}(\D(M,N))$ is  characterized by
 \[ \L_{\D(Y)}t=0,\ \ \L_{\D(Y)}\kappa^*f=\kappa^*\L_Yf,\ \ \
 \L_{\D(Y)}\wt{f}
 =\wt{\L_Yf}\ \ \mbox{(for $f|_N=0$)}.\]
 Its restriction to the zero fiber is the  \emph{linear approximation} $\nu(Y)$, while the restriction to $M\times \R^\times$ is simply $Y\times 0$.
 (Equivalently, $\nu(Y)$ is obtained by applying the normal bundle functor to $Y\colon (M,N)\to (TM,TN)$, see \cite[Sec.~2.2]{bur:spl}.)
 \item If a differential form $\alpha\in \Omega^k(M)$ vanishes along $N$,  then $\f{1}{t}\alpha\in \Omega^k(M\times \R^\times)$ extends smoothly to a form $\D(\alpha)\in \Omega^k(\D(M,N))$ (see Appendix~\ref{app:coord}). Its restriction to the zero fiber is a linear $k$-form on $\nu(M,N)$,  the \emph{linear approximation $\nu(\alpha)$} of $\alpha$. For $k=0$, i.e., $\alpha=f \in C^\infty(M)$, we have
 $\D(f)=\wt{f}$.
 \end{enumerate}
 %

\subsection{Tubular neighborhood embeddings}\label{subsec:setup}
We define a \emph{tubular neighborhood embedding} for the pair $(M,N)$ to be an embedding
\begin{equation}\label{eq:tub}
 \psi\colon \nu(M,N)\to M,\end{equation}
taking the zero section of the normal bundle to $N\subset M$, and such that the induced map $\nu(\psi)$
is the canonical identification $\nu(V,N)\cong V$ for the vector bundle $V=\nu(M,N)$.
A vector field $X\in\mf{X}(M)$ is called \emph{Euler-like} for $(M,N)$  if it is complete, and
\begin{equation}\label{eq:eulerlikecond}
 X|_N=0,\ \nu(X)=\E.\end{equation}
The conditions \eqref{eq:eulerlikecond} are equivalent to the requirement that if $f\in C^\infty(M)$ vanishes along $N$, then $\L_Xf-f$ vanishes to second order along $N$.  Note that an incomplete vector field with the properties \eqref{eq:eulerlikecond} can be made complete, by multiplying with a function supported on a suitable open neighborhood of $N$, and equal to $1$ near $N$ (see \cite[Rem.~2.9]{bur:spl}). A tubular neighborhood embedding \eqref{eq:tub} determines an Euler-like vector field on its image, by pushing forward the Euler vector field of the normal bundle.
In the other direction, we have \cite[Sec.~2]{bur:spl}:
\begin{theorem} \label{th:euler}
An Euler-like vector field $X$ for $(M,N)$ determines a
unique tubular neighborhood embedding $\psi\colon \nu(M,N)\to M$ with
the property $\E\sim_\psi X$.
\end{theorem}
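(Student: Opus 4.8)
The plan is to use the deformation space $\D(M,N)$ as the bridge between $\nu(M,N)$ (the $t=0$ fiber) and $M$ (the $t=1$ fiber), following the strategy sketched in the introduction. Given the Euler-like vector field $X$ on $M$, I first produce a vector field $W\in\mf{X}(\D(M,N))$ with $\L_W t = 1$, so that $W$ lifts the coordinate vector field $\tfrac{\p}{\p t}$ on $\R$. The natural candidate is $W = \tfrac{\p}{\p t} + \tfrac1t\wh{X}$ on $M\times\R^\times$ (where $\wh{X}$ is the vertical lift of $X$ from property \eqref{it:v}), which makes sense because $X$ vanishes along $N$, so $\wh{X}$ is divisible by $t$; in the notation of the excerpt, $W = \tfrac{\p}{\p t} + \D(X) + (\text{correction})$. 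More precisely, I would check in local coordinates (Appendix~\ref{app:coord}) that $\tfrac1t\wh{X}$ extends smoothly across $t=0$: on the zero fiber its value is $\nu(X) = \E$ by property \eqref{it:iv} applied to $X$ tangent to $N$, combined with the definition of Euler-like. So $W$ is smooth on all of $\D(M,N)$, tangent to the zero fiber $\nu(M,N)$ where it restricts to $\E$.

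Next I establish completeness of $W$. The flow of $W$ must cover the flow $t\mapsto t+s$ of $\tfrac{\p}{\p t}$ on $\R$, and on each fiber $\pi^{-1}(t)=M$ ($t\neq 0$) the flow is driven by $\tfrac1t X$, a rescaling of the complete vector field $X$; since $X$ is complete, its rescalings are complete, and one checks that the flow lines do not escape to infinity as $t\to 0$ because of the smooth extension of $W$ over the zero fiber (together with the $\R^\times$-equivariance relating $W$ to $\Theta$ from property \eqref{it:iii}). Thus $W$ is a complete vector field, and its time-one flow $\varphi^W_1$ maps the fiber $\pi^{-1}(0)=\nu(M,N)$ diffeomorphically onto the fiber $\pi^{-1}(1)=M$. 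Define $\psi := \varphi^W_1\circ j_0\colon \nu(M,N)\to M$; this is an embedding since $\varphi^W_1$ is a diffeomorphism of the total space restricting to a diffeomorphism between fibers.

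Then I verify the defining properties of a tubular neighborhood embedding and the intertwining relation. That $\psi$ takes the zero section to $N$ follows because the zero section of $\nu(M,N)$ is cut out by the functions $\wt f$ ($f|_N=0$), which are $W$-related appropriately — concretely, $W$ is tangent to the subset $\kappa^{-1}(N)$ since $X|_N=0$ implies $\wh X$ vanishes there, so the flow preserves $\kappa^{-1}(N)$, whose intersection with each fiber is $N$ (or the zero section on the $t=0$ fiber). For the relation $\E\sim_\psi X$: the Euler vector field $\E$ on $\nu(M,N)$ is $W$-related to... not quite $X$, but rather one uses that $\Theta$ (which restricts to $-\E$ on the zero fiber) interacts with $W$ in a controlled way. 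The cleanest route is to observe that $j_0^*$ intertwines $\E$ with $-\Theta|_{\nu(M,N)}$, that $j_1^*$ intertwines $X$ with the restriction of $-\Theta + \text{something}$... — actually the right statement is that $[\Theta, W]$ is computable from the Leibniz relations on the three types of generating functions, and pushing forward by $\varphi^W_1$ converts the $t=0$ data into the $t=1$ data. I would carry this out by direct computation on generators: apply $\L_{\varphi^W_1\cdot(-)}$ to $t$, $\kappa^*f$, and $\wt f$, using the explicit bracket relations, to confirm that the pushforward of $\E$ under $\psi$ is exactly $X$. Finally, uniqueness: if $\psi'$ is another tubular neighborhood embedding with $\E\sim_{\psi'} X$, then $\psi^{-1}\circ\psi'$ is a diffeomorphism of (a neighborhood in) $\nu(M,N)$ commuting with $\E$ and inducing the identity on $\nu$, hence — since a diffeomorphism commuting with the Euler vector field and fixing the zero section to first order must be the identity — we get $\psi=\psi'$.

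The main obstacle I anticipate is \emph{completeness of $W$ near the zero fiber}, i.e.\ ensuring the flow exists for all time $s$ down to and across $t=0$. Completeness of $X$ gives completeness fiberwise for $t\neq 0$, but one must rule out flow lines running off to infinity in the normal directions as $t\to 0$; this is where the homogeneity/$\R^\times$-equivariance of the deformation space (property \eqref{it:iii}) and the smooth extension of $\tfrac1t\wh X$ across $t=0$ do the essential work — the remark in the excerpt that homogeneous-degree-$0$ vector fields inherit completeness from their restriction to the zero section is the model for the argument. The verification of $\E\sim_\psi X$ via the generator computation is bookkeeping but needs care with signs, given the flow sign convention $\L_X=\tfrac{\p}{\p s}|_{s=0}(\varphi^X_{-s})^*$ fixed in Section~\ref{subsec:convent}.
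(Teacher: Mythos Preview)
Your overall strategy --- build a vector field $W$ on $\D(M,N)$ lifting $\tfrac{\p}{\p t}$ and flow the zero fiber to a nonzero fiber --- is exactly the paper's approach. But the execution has a genuine gap in the construction of $W$, and two further errors downstream.

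\textbf{The construction of $W$ is wrong.} Your candidate $W=\tfrac{\p}{\p t}+\tfrac1t\wh{X}$, evaluated on $M\times\R^\times$, is $\tfrac{\p}{\p t}+X$ (since $\wh{X}|_{M\times\R^\times}=tX$), not the correct $\tfrac{\p}{\p t}+\tfrac1t X$. More fundamentally, $\tfrac{\p}{\p t}$ (in the product coordinates on $M\times\R^\times$) does \emph{not} extend smoothly across $t=0$: in adapted coordinates $(x,\wt y,t)$ it picks up a $\tfrac1t\sum\wt y_j\,\p/\p\wt y_j$ singularity. So you cannot build $W$ as ``(a smooth lift of $\p/\p t$) $+$ (something involving $X$)''; no such smooth lift exists a priori. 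The paper's construction is $W=\tfrac1t(\D(X)+\Theta)$: neither $\tfrac1t\D(X)$ nor $\tfrac1t\Theta$ extends individually (they restrict to $\E$ and $-\E$ on the zero fiber), but their sum $\D(X)+\Theta$ vanishes along $\{t=0\}$ precisely because $X$ is Euler-like, hence is divisible by $t$. Your parenthetical ``$+(\text{correction})$'' is hiding the entire mechanism.

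\textbf{$W$ is not tangent to the zero fiber.} You assert both $\L_W t=1$ and that $W$ is tangent to $\{t=0\}$ restricting to $\E$; these are contradictory. The flow of $W$ is transverse to every fiber, which is how it carries $\nu(M,N)$ into $M$.

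\textbf{$W$ is not complete, and $\psi$ is not onto $M$.} The paper shows only that integral curves \emph{starting on the zero fiber} exist for all time (via the explicit formula $\varphi_s(m,t)=(\varphi^X_{-\log(1-s/t)}(m),\,t-s)$ on $M\times\R^\times$). The flow $\varphi_{-1}$ takes $\nu(M,N)$ onto an open set $U\subset M$ (the attracting set of $N$ under $X$), not all of $M$; so $\psi=\kappa\circ\varphi_{-1}\circ j$ is an embedding, not a diffeomorphism between fibers. (Note also the sign: with the paper's flow convention $\varphi_s$ sends $t\mapsto t-s$, so one uses $\varphi_{-1}$ to reach $\pi^{-1}(1)$.) Your verification that $\psi$ is a tubular neighborhood embedding and intertwines $\E$ with $X$ is sketched correctly in spirit; the paper does the latter via $[W,\D(X)]=0$ and the former via the commutation relation $[W,\wh Y]=\D(Y+[X,Y])$.
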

The proof in \cite{bur:spl} is rather short, but its geometry was further clarified in \cite{hig:eul}, using the deformation space.  We will review the viewpoint from \cite{hig:eul}, since it provides an appropriate framework for the applications to more complicated settings, and since we would like to elaborate on some aspects.

\begin{lemma}\label{lem:W}
\cite{hig:eul} If $X$ is Euler-like, then
the vector field $\f{\p}{\p t}+\f{1}{t} X$ on $M\times \R^\times $ extends to a  vector field  $W\in\mf{X}(\D(M,N))$.
\end{lemma}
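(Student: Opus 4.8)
The plan is to write $W$ in the explicit form $W=t^{-1}\big(\Theta+\D(X)\big)$, where $\Theta\in\mf{X}(\D(M,N))$ is the canonical vector field of \eqref{it:iii} and $\D(X)=t^{-1}\wh X\in\mf{X}(\D(M,N))$ is the lift of $X$ from \eqref{it:iv}. The point is that $\Theta+\D(X)$ will be a genuinely smooth vector field on $\D(M,N)$ which in addition \emph{vanishes along the zero fiber}, so that dividing by $t$ again yields a smooth vector field. First I would rewrite the given field over $M\times\R^\times$ as
\[
\frac{\p}{\p t}+\frac{1}{t}X=\frac{1}{t}\Big(t\frac{\p}{\p t}+X\Big),
\]
so that it suffices to extend $t\frac{\p}{\p t}+X$ to a smooth vector field on $\D(M,N)$ that is divisible by $t$.

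For the extension: by \eqref{it:iii} the vector field $t\frac{\p}{\p t}$ on $M\times\R^\times$ is the restriction of $\Theta$. Since $X$ is Euler-like we have $X|_N=0$, so $X$ is in particular tangent to $N$ and \eqref{it:iv} applies: the restriction of $\D(X)$ to $M\times\R^\times$ is $X\times 0$, i.e.\ the field $X$ itself viewed on $M\times\R^\times$ with no $\p/\p t$-component. Hence $\Theta+\D(X)\in\mf{X}(\D(M,N))$ is smooth and restricts to $t\frac{\p}{\p t}+X$ over $M\times\R^\times$.

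Next I would check that $\Theta+\D(X)$ vanishes identically along the zero fiber $j\colon\nu(M,N)\hra\D(M,N)$. By \eqref{it:iii}, $\Theta$ is tangent to the zero fiber with $-\E\sim_j\Theta$, and by \eqref{it:iv}, $\D(X)$ is tangent to the zero fiber with restriction the linear approximation $\nu(X)$. The Euler-like hypothesis says precisely $\nu(X)=\E$, so the restriction of $\Theta+\D(X)$ to the zero fiber is $-\E+\E=0$. To finish, I would invoke the elementary fact that a smooth vector field on $\D(M,N)$ vanishing along the regular hypersurface $\pi^{-1}(0)=\{t=0\}$ is divisible by $t$ with smooth quotient (Hadamard's lemma applied componentwise in a chart adapted to $\{t=0\}$, or via the explicit coordinates of Appendix~\ref{app:coord}). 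This divisibility is the only step requiring real work; conceptually, everything hinges on the fact that the Euler-like condition $\nu(X)=\E$ is exactly what makes $\Theta+\D(X)$ vanish on the zero fiber. Setting $W:=t^{-1}\big(\Theta+\D(X)\big)$ then gives $W\in\mf{X}(\D(M,N))$ with $W|_{M\times\R^\times}=\frac{\p}{\p t}+\frac1t X$, as desired.

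An alternative route, which also yields uniqueness of $W$, is to define it directly through its action on the generating functions $t,\ \kappa^*f,\ \wt f$ of $C^\infty(\D(M,N))$: one is forced to set $\L_W t=1$, $\L_W\kappa^*f=\wt{\L_Xf}$, and $\L_W\wt f=t^{-1}\wt{\L_Xf-f}$ for $f|_N=0$. Here $\wt{\L_Xf}$ is legitimate because $X|_N=0$ forces $(\L_Xf)|_N=0$, and $t^{-1}\wt{\L_Xf-f}$ is smooth precisely because the Euler-like condition says $\L_Xf-f$ vanishes to second order along $N$, so that $\wt{\L_Xf-f}$ is divisible by $t$ (write $\L_Xf-f$ locally as a sum of products of two functions vanishing on $N$). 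One then checks that these prescriptions are consistent with the relations among the generators and hence define a derivation of $C^\infty(\D(M,N))$, i.e.\ the vector field $W$.
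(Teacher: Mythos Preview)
Your proposal is correct and follows essentially the same approach as the paper: the paper's proof is a terse version of your first argument, observing that $\D(X)+\Theta$ restricts to $\E-\E=0$ along $\pi^{-1}(0)$ and is therefore divisible by $t$, then setting $W=t^{-1}(\D(X)+\Theta)$. Your alternative route via the action on the generators $t,\kappa^*f,\wt f$ is a nice supplement not present in the paper's proof (though it is in the spirit of the coordinate description in Appendix~\ref{app:coord}).
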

\begin{proof}
Since $\E=\nu(X)\sim_j \D(X)$ while $-\E\sim_j \Theta$, the sum $\D(X)+\Theta$ vanishes along the hypersurface $\pi^{-1}(0)$, and is hence divisible by $t$. The vector field $W=\f{1}{t}(\D(X)+\Theta)$ has the desired property.
\end{proof}
See Appendix~\ref{app:coord} for a coordinate description of $W$. We will denote the (local) flow of $W$ by
$\varphi_s=\varphi_s^W$.
\begin{lemma} For all $Y\in\mf{X}(M)$, the vector field $Y+[X,Y]$ is tangent to $N$, and
\begin{equation}\label{eq:yhat}
 [W,\wh{Y}]=\D(Y+[X,Y]).
\end{equation}
If $Y$ is tangent to $N$, then
\begin{equation}\label{eq:dy} [W,\D(Y)]=\f{1}{t} \D([X,Y]).\end{equation}
\end{lemma}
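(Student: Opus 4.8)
The plan is to check both identities by evaluating the relevant vector fields on the three types of functions that generate $C^\infty(\D(M,N))$, namely $t$, the functions $\kappa^*f$ with $f\in C^\infty(M)$, and the functions $\wt f$ with $f|_N=0$. Equivalently — and more transparently — since $M\times\R^\times$ is open and dense in $\D(M,N)$ and every vector field in sight is globally smooth, it suffices to compare restrictions to $M\times\R^\times$, where $W=\f{\p}{\p t}+\f{1}{t}X$ by Lemma \ref{lem:W}, where $\wh Y$ is the $\pi$-vertical field $tY$ by \eqref{it:v}, and where $\D(Z)=Z\times 0$ for $Z$ tangent to $N$ by \eqref{it:iv}. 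The algebraic route needs the bookkeeping $\L_Wt=1$, $\L_W\kappa^*f=\f{1}{t}\kappa^*\L_Xf$ and $\L_W\wt f=\f{1}{t}\wt{\L_Xf-f}$, all immediate from $W=\f{1}{t}(\D(X)+\Theta)$ together with \eqref{it:iii} and \eqref{it:iv}.

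First I would settle the tangency claim, which is a purely local computation and is the one place where the Euler-like hypothesis is used in an essential way. For $f\in C^\infty(M)$ with $f|_N=0$, the Euler-like condition on $X$ (in the reformulation given just above Theorem \ref{th:euler}) gives $\L_Xf=f+g$ with $g$ vanishing to second order along $N$, so that $\L_{Y+[X,Y]}f=\L_Yf+\L_X\L_Yf-\L_Y\L_Xf=\L_X(\L_Yf)-\L_Yg$. Restricted to $N$, the first term vanishes because $X|_N=0$ and the second because the Lie derivative of a function vanishing to second order along $N$ vanishes along $N$. Hence $Y+[X,Y]$ is tangent to $N$, and $\D(Y+[X,Y])$ is defined.

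Next, for \eqref{eq:yhat} I would restrict to $M\times\R^\times$: using $\L_Xt=0$ and $Y(1/t)=0$ one gets $[W,\wh Y]=\bigl[\f{\p}{\p t}+\f{1}{t}X,\,tY\bigr]=Y+[X,Y]$, which is exactly the restriction of $\D(Y+[X,Y])$; density then yields the identity on all of $\D(M,N)$. The alternative algebraic check is just as quick: on $\kappa^*f$, for example, one finds $\L_{[W,\wh Y]}\kappa^*f=\kappa^*\bigl(\L_Yf+\L_X\L_Yf-\L_Y\L_Xf\bigr)=\kappa^*\L_{Y+[X,Y]}f=\L_{\D(Y+[X,Y])}\kappa^*f$, and one verifies the analogous equalities on $t$ and on $\wt f$ the same way.

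Finally, \eqref{eq:dy} I would deduce formally from \eqref{eq:yhat}. Since $\D(Y)=t^{-1}\wh Y$ and $\L_Wt=1$, the Leibniz rule gives $[W,\D(Y)]=t^{-1}[W,\wh Y]-t^{-2}\wh Y=t^{-1}\D(Y+[X,Y])-t^{-1}\D(Y)=t^{-1}\D([X,Y])$, using that $\D$ is additive on vector fields tangent to $N$ (clear from the characterizing relations in \eqref{it:iv}) and that $[X,Y]$ is tangent to $N$. I expect the only real obstacle to be the tangency bookkeeping of the second step, which genuinely needs the \emph{second-order} vanishing of $\L_Xf-f$, not merely $X|_N=0$; a secondary subtlety — that $t^{-1}\D([X,Y])$ is smooth, i.e.\ $\nu([X,Y])=[\E,\nu(Y)]=0$ by homogeneity of $\nu(Y)$ — is sidestepped automatically by routing \eqref{eq:dy} through $\wh Y$ as above, and everything else is linear algebra on the generators of $C^\infty(\D(M,N))$.
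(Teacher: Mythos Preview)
Your proof is correct and follows essentially the same approach as the paper: establish the tangency of $Y+[X,Y]$ to $N$ by exploiting that $\L_Xf-f$ vanishes to second order (the paper groups the terms as $\L_Y(f-\L_Xf)+\L_X\L_Yf$, which is the same computation), and then verify both commutation relations on the open dense set $M\times\R^\times$ and extend by continuity. The only cosmetic difference is that the paper checks \eqref{eq:dy} directly on $M\times\R^\times$ rather than deducing it from \eqref{eq:yhat} via $\D(Y)=t^{-1}\wh Y$, but this is not a substantive change.
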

\begin{proof}
If $f\in C^\infty(M)$ vanishes on $N$, then so does $\L_{Y+[X,Y]}f=\L_Y(f-\L_Xf)+\L_X\L_Yf$, since $f-\L_Xf$ vanishes to second order on $N$ and $X|_N=0$. Hence $Y+[X,Y]$ is tangent to $N$. The two commutation relations are evident on $M\times \R^\times$, hence by continuity they hold everywhere on $\D(M,N)$.
\end{proof}

\begin{lemma}
Given $x\in \nu(M,N)\subset \D(M,N)$, the integral curve $\varphi_s(x)$ of $W$ is defined for all $s\in\R$.
\end{lemma}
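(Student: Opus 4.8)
The plan is to show that the integral curve $\varphi_s(x)$ through a point $x$ in the zero fiber $\nu(M,N)$ cannot escape to infinity in finite time, by producing a reason that it stays in a region where $W$ is complete. First I would observe that $W$ is $\pi$-related to $\f{\p}{\p t}$ on $\R$ (indeed $\L_W t = \f{1}{t}(\L_{\D(X)}t + \L_\Theta t) = \f{1}{t}(0 + t) = 1$), so along an integral curve the $\R$-coordinate evolves as $t(\varphi_s(x)) = s$ (using $\pi(x) = 0$ for $x$ in the zero fiber). Hence the curve, as long as it exists, moves transversally through the fibers and never returns to $\pi^{-1}(0)$ for $s \neq 0$; for $s > 0$ it lives in $M \times \R^\times$, where $W = \f{\p}{\p t} + \f{1}{t}X$.

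The key point is then to relate $\varphi_s$ to the flow of the Euler-like vector field $X$ on $M$ via the $\R^\times$-action generated by $\Theta$ (property \eqref{it:iii}). The natural guess is that $\varphi_s(x)$, for $x = j(v)$ with $v \in \nu(M,N)$, is expressed in terms of $\psi$: one expects $\kappa(\varphi_s(j(v))) = \psi(s^{-1}v)$ or something of this shape, reflecting that the flow of $W$ realizes the tubular neighborhood embedding. More robustly, I would use the commutation relation between $W$ and $\Theta$. Since $\Theta$ generates the $\R^\times$-action and $W = \f{1}{t}(\D(X)+\Theta)$, a direct computation of $[W,\Theta]$ (using $[\Theta,\D(X)] = 0$, as $\D(X)$ is homogeneous of degree $0$, and $[\Theta, t^{-1}] = -t^{-1}$ on functions) gives $[W,\Theta] = -W$, i.e. the flows satisfy an intertwining relation $a \cdot \varphi_s(x) = \varphi_{as}(a\cdot x)$ for $a \in \R^\times$. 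Now, given $x$ in the zero fiber and any target time $s_0 \neq 0$, pick $a$ with $|as_0|$ small; then $a \cdot x$ is still in the zero fiber (the $\R^\times$-action preserves $\pi^{-1}(0)$, acting by scalar multiplication), and $\varphi_{as_0}(a\cdot x)$ exists because for small times the flow of $W$ starting from a point of the compact-in-fibers zero section stays in a fixed compact neighborhood — here one uses completeness of $X$ combined with the local flow-box structure near $\pi^{-1}(0)$, exactly as in Lemma~\ref{lem:W} and its coordinate description in Appendix~\ref{app:coord}. Then $\varphi_{s_0}(x) = a^{-1}\cdot \varphi_{as_0}(a\cdot x)$ is defined.

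So the structure is: (1) $t\circ \varphi_s(x) = s$, pinning down the fiber; (2) the scaling symmetry $a\cdot\varphi_s(x) = \varphi_{as}(a\cdot x)$ from $[W,\Theta] = -W$; (3) short-time existence near the zero fiber from completeness of $X$; (4) combine (2) and (3) to bootstrap from short-time to all-time existence. The main obstacle I anticipate is step (3): making precise that the flow of $W$ is defined for a uniform short time on all of $\pi^{-1}(0)$ — the zero fiber is non-compact (it is a whole vector bundle), so one cannot simply invoke compactness. The resolution is that the $\R^\times$-equivariance already reduces uniformity over the fiber to a neighborhood of the zero section of $\nu(M,N)$ intersected with the zero fiber; equivalently, one uses that near any point of $N \subset \nu(M,N) \subset \D(M,N)$ the vector field $W$ has an explicit form (cf. the coordinate description referenced after Lemma~\ref{lem:W}) in which $X$ being Euler-like forces $W$ to be, to leading order, $\f{\p}{\p t} + \Theta/t$-like and manifestly complete in the fiber directions, while the completeness of $X$ handles the base direction.
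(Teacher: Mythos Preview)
Your step (1) is correct and is exactly where the paper begins: $W\sim_\pi \f{\p}{\p t}$, so for $x\in\pi^{-1}(0)$ the curve $\varphi_s(x)$ lies in $M\times\R^\times$ for all small $s\neq 0$. (With the paper's sign convention $\pi(\varphi_s(x))=-s$, not $+s$, but this is immaterial.)

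The gap is in your bootstrap via the $\R^\times$-action. The scaling relation $a\cdot\varphi_s(x)=\varphi_{as}(a\cdot x)$ is correct, but it gives \emph{no} leverage for extending the interval of existence at a fixed $x$. If $(\,-T_-(x),\,T_+(x)\,)$ is the maximal interval at $x$, the scaling forces $T_\pm(a\cdot x)=a\,T_\pm(x)$ for $a>0$; so requiring $\varphi_{as_0}(a\cdot x)$ to exist is the condition $as_0\in(-aT_-(x),aT_+(x))$, i.e.\ $s_0\in(-T_-(x),T_+(x))$, which is what you wanted to prove in the first place. Your step (4) is therefore circular, and the difficulty you flag in step (3) (no uniform short-time existence over the non-compact fiber, and $a\cdot x=m_{a^{-1}}(x)$ escapes to infinity as $a\to 0$) is not resolved by the equivariance---it is the same problem restated.

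The paper's route is much shorter and avoids all of this. After your step (1), one simply writes down the flow of $W=\f{1}{t}X+\f{\p}{\p t}$ on $M\times\R^\times$ explicitly:
\[
\varphi_s(m,t)=\big(\varphi^X_{-\log(1-s/t)}(m),\ t-s\big),\qquad -\infty<\tfrac{s}{t}<1.
\]
Completeness of $X$ makes the first component defined whenever the reparametrisation $-\log(1-s/t)$ is finite; for $t<0$ this holds for all $s>0$ (and symmetrically for $t>0$, all $s<0$). Since the curve through $x\in\pi^{-1}(0)$ enters $M\times\R_{<0}$ for small $s>0$, it persists for all $s>0$; likewise for $s<0$. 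No scaling argument is needed.
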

\begin{proof}
We will show that $\varphi_s(x)$ is defined for all $s>0$ (the proof for $s<0$ is similar).
Since $W\sim_\pi \f{\p}{\p t}$, the map $\pi$ intertwines the flow of $W$ with the translation flow $t\mapsto t-s$ on $\R$. Hence, $\varphi_s(x)\in M\times \R_{<0}$ for $s>0$ sufficiently small. It therefore suffices to show that the solution curves of points $(m,t)\in M\times \R_{<0}\subset \D(M,N)$ are defined for all $s>0$. But on $M\times \R^\times$, the vector field $W$ is given by $\f{1}{t}X+\f{\p}{\p t}$, hence we can describe its (local) flow in terms
of the flow $\varphi_s^X$ of $X$:
\begin{equation}\label{eq:explicit}
 \varphi_s(m,t)
 =(\varphi^X_{-\log(1-\f{s}{t})}(m),\,t-s)
 \end{equation}
for 
$-\infty<\f{s}{t}<1$. In particular, if $t<0$ this is defined for all $s>0$.
\end{proof}

\begin{proof}[Proof of Theorem \ref{th:euler}]
For all $s\in \R$, let $\D(M,N)_s\subset \D(M,N)$ be the domain of $\varphi_s$. By the lemma, this is
an open neighborhood of $\nu(M,N)$ for all $s$. Hence we obtain a well-defined embedding, for any given $s\neq 0$,  by the composition
\begin{equation}\label{eq:sflow}
\nu(M,N)\stackrel{j}{\lra}\D(M,N)_s\stackrel{\varphi_s}{\lra}\D(M,N)\stackrel{\kappa}{\lra}M.
\end{equation}
Equivalently, this map is the restriction of $\varphi_s$ to a map from $\pi^{-1}(0)=\nu(M,N)$ to
$\pi^{-1}(-s) =M$.  Since $[W,\D(X)]=0$, the map \eqref{eq:sflow} intertwines the restrictions of $\D(X)$ to $\nu(M,N)$ and to  $M$; it therefore takes  $\E$ to $X$.

Since $W$ restricts to the vector field $\f{\p}{\p t}$ along $N\times\R\subset \D(M,N)$,
the map \eqref{eq:sflow} restricts to the identity map on $N$.
To find the induced map on normal bundles, consider a section of $\nu(M,N)\cong \nu(\nu(M,N),N)$ represented
as $[Y|_N]$ for some vector field $Y\in \mf{X}(M)$. Recall that the vector field $\wh{Y}\in\mf{X}(\D(M,N))$ is given by $tY$ on $M\times \R^\times$, and by
$[Y|_N]$ (as a fiberwise constant vector field) on $\pi^{-1}(0)=\nu(M,N)$.  Equation
\eqref{eq:yhat} shows that the flow
$\varphi_s$ of $W$ preserves $\wh{Y}$ modulo vector fields that are tangent to $N\times \R$. Hence, the map on normal bundles induced by \eqref{eq:sflow} is
$v\mapsto -s v$, and so is the identity map for $s=-1$. We conclude that the map \eqref{eq:sflow} with $s=-1$,
\begin{equation}
\psi=\kappa\circ \varphi_{-1}\circ j,
\end{equation}
 is the desired tubular neighborhood embedding.
\end{proof}

\begin{remark}
Note that the image $U=\psi(\nu(M,N))$ of the tubular neighborhood embedding $\psi$ is
simply the attracting set of $N$:
$$
U=\{m\in M|\ \ \lim_{s\to \infty}\varphi_s^X(m)\in N\}.
$$
The map $\D(\psi)\colon \nu(M,N)\times \R\equiv \D(\nu(M,N),N)\to
\D(U,N)$ takes $\f{\p}{\p t}$ to $W$; in particular, the restriction of $W$ to
$\D(U,N)\subset \D(M,N)$ is complete.
\end{remark}

\begin{remark}
In \cite[Sec.~2]{bur:spl}, we chose an `initial' tubular neighborhood embedding to reduce to the case that $M$ is a vector bundle $V\to N$, with $N$ its zero section. In terms of the identification $\D(V,N)\cong V$, the vector field $W$ may then be regarded
as a time dependent vector field on $V$. This is exactly the vector field
used in \cite[Lemma~2.4]{bur:spl}.
\end{remark}

We also note the following property of $\psi$. Let $m_t$ denote scalar multiplication by $t$ on $\nu(M,N)$.
\begin{lemma}\label{lem:later}
For all $t\in\R$,
\begin{equation}
\psi\circ m_{t}=
\kappa\circ \varphi_{-t}\circ j.
\end{equation}
\end{lemma}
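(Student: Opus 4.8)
The plan is to exploit the $\R^\times$-equivariance of $W$ relative to $\Theta$ (the generator of the $\R^\times$-action from property \eqref{it:iii}). Recall that $W = \tf{1}{t}(\D(X)+\Theta)$, and that $a\in\R^\times$ acts on $M\times\R^\times$ by $(m,t)\mapsto(m,at)$ and on $\nu(M,N)$ by scalar multiplication by $a^{-1}$. First I would compute the effect of the $\R^\times$-action on $W$: since $\D(X)$ is $\R^\times$-invariant (it restricts to $X\times 0$ on $M\times\R^\times$, which is visibly invariant under $(m,t)\mapsto(m,at)$), while $\Theta$ is the generator of the action and hence invariant, we get that under the scaling by $a$ the function $t$ pulls back to $a t$, so $W=\tf1t(\D(X)+\Theta)$ pulls back to $\tf{1}{at}(\D(X)+\Theta)=\tf1a W$. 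In other words, if $\rho_a\colon \D(M,N)\to\D(M,N)$ denotes the action of $a$, then $\rho_a^* W = \tf1a W$, equivalently $(\rho_a)_* W = a^{-1}W$ when pushing forward.

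The standard consequence of such homogeneity is a rescaling of the flow: $\rho_a\circ\varphi_s = \varphi_{as}\circ\rho_a$ on the appropriate domains. I would verify this by differentiating both sides in $s$ (the left side has $s$-derivative $(\rho_a)_*W$ evaluated along the curve, i.e. $a^{-1}W$, matching the $s$-derivative of the right side, which carries a factor $a$ from the chain rule times $W$... — more carefully, writing $\gamma(s)=\rho_a(\varphi_s(x))$ one gets $\dot\gamma(s)=(d\rho_a)(W_{\varphi_s(x)})=a^{-1}W_{\gamma(s)}$, whereas $\delta(s)=\varphi_{as}(\rho_a(x))$ has $\dot\delta(s)=a\,W_{\delta(s)}$; so in fact the correct identity is $\rho_a\circ\varphi_s=\varphi_{s/a}\circ\rho_a$, which I would pin down by this computation and uniqueness of integral curves). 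Applying this with the value $a=t$: for $x\in\nu(M,N)$ we have $\rho_t(\varphi_s(x)) = \varphi_{s/t}(\rho_t(x))$, and I would choose $s$ so that the outputs land in the $t$-fiber in the two ways being compared.

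Concretely: $\psi\circ m_t$ evaluated at $x\in\nu(M,N)$ is $\kappa(\varphi_{-1}(j(m_t(x))))$. Now $m_t(x)=\rho_{t^{-1}}(x)$ by property \eqref{it:iii} (the action of $t^{-1}\in\R^\times$ on $\nu(M,N)$ is scalar multiplication by $t$). So I rewrite $j(m_t(x))=\rho_{t^{-1}}(j(x))$, using that $j$ is $\R^\times$-equivariant for the action on $\nu(M,N)$ and the restriction of the action to the zero fiber. Then the rescaling identity gives $\varphi_{-1}(\rho_{t^{-1}}(j(x))) = \rho_{t^{-1}}(\varphi_{-t}(j(x)))$. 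Finally I apply $\kappa$ and observe that $\kappa\circ\rho_a = \kappa$, because $\rho_a$ fixes the base: on $M\times\R^\times$ it is $(m,t')\mapsto(m,at')$ which $\kappa$ sends to $m$ in both cases, and on $\nu(M,N)$ both $\rho_a$ and the identity compose with $\kappa=i\circ p$ to the same thing since $p$ is $\R^\times$-invariant. Hence $\kappa(\varphi_{-1}(j(m_t x))) = \kappa(\rho_{t^{-1}}(\varphi_{-t}(j(x)))) = \kappa(\varphi_{-t}(j(x)))$, which is exactly $(\kappa\circ\varphi_{-t}\circ j)(x)$.

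The main obstacle is bookkeeping rather than conceptual: getting the direction and magnitude of the flow rescaling exactly right (is it $\varphi_{s/a}$ or $\varphi_{as}$?), and being careful about domains of definition — $\varphi_{-t}\circ j$ is defined on all of $\nu(M,N)$ by the lemma preceding Theorem~\ref{th:euler} only after one checks the domain matches up under $\rho_{t^{-1}}$, though since that lemma asserts $\varphi_s(x)$ is defined for \emph{all} $s$ when $x\in\nu(M,N)$, and $\rho_{t^{-1}}(\nu(M,N))=\nu(M,N)$, this causes no trouble. The case $t=0$ should be handled separately or by continuity: $\psi\circ m_0$ is the constant map to $N$ (composition with zero section then $\psi$), while $\kappa\circ\varphi_0\circ j=\kappa\circ j=i\circ p$ is the bundle projection followed by inclusion — wait, these agree only after noting $\varphi_0=\mathrm{id}$, so $\kappa\circ\varphi_0\circ j=i\circ p\colon\nu(M,N)\to N\hookrightarrow M$, which indeed is \emph{not} the constant map; so in fact for $t=0$ the identity reads $\psi\circ m_0=i\circ p$, and I would simply remark that $\psi\circ m_0=\psi\circ(\text{zero section})\circ p$ and $\psi$ restricted to the zero section is the inclusion $N\hookrightarrow M$, so both sides equal $i\circ p$, consistent with the $t\neq0$ formula in the limit.
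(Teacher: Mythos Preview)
Your approach is exactly the paper's: use the degree $-1$ homogeneity of $W$ under the $\R^\times$-action to obtain a flow rescaling identity, then combine with $j\circ m_t=\rho_{t^{-1}}\circ j$ and $\kappa\circ\rho_a=\kappa$. There is, however, a sign slip in your derivation of the rescaling. From $\rho_a^*W=\tfrac1a W$ one gets $(\rho_a)_*W=aW$ (not $a^{-1}W$): for a diffeomorphism, $(\rho_a)_*=(\rho_a^{-1})^*=(\rho_{a^{-1}})^*$, so $(\rho_a)_*W=\tfrac{1}{a^{-1}}W=aW$. Consequently $\gamma(s)=\rho_a(\varphi_s(x))$ satisfies $\dot\gamma=aW_\gamma$, giving $\rho_a\circ\varphi_s=\varphi_{as}\circ\rho_a$ --- your \emph{first} guess, and precisely the form the paper states as $\varphi_{as}(a\cdot x)=a\cdot\varphi_s(x)$. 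Your ``correction'' to $\varphi_{s/a}$ is wrong.

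Fortunately your key displayed step $\varphi_{-1}(\rho_{t^{-1}}(j(x)))=\rho_{t^{-1}}(\varphi_{-t}(j(x)))$ is exactly what the \emph{correct} identity yields (take $a=t^{-1}$, $s=-t$), so the conclusion stands once the intermediate bookkeeping is fixed; you evidently applied the right formula without reconciling it with the stated one. Your treatment of $t=0$ and of domains is fine and matches the paper's (which simply restricts to $t\neq 0$ and appeals to continuity).
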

\begin{proof}
It suffices to prove this for $t=a\neq 0$. Since $W$ is homogeneous of degree $-1$ relative to the $\R^\times$-action on the deformation space (see (c) in Section~\ref{subsec:def}), its flow satisfies
$\varphi_{as}(a\cdot x)=a\cdot \varphi_s(x)$.
Hence
$\kappa(\varphi_{s}(j(m_a(v)))=
\kappa(\varphi_{s}(a^{-1}\cdot j(v)))=\kappa(a^{-1}\cdot \varphi_{as}(j(v)))
=\kappa(\varphi_{as}(j(v)))$.
Now put $s=-1$.
\end{proof}

\subsection{The splitting theorem for singular foliations}\label{subsec:foliation}
We now illustrate these techniques with an application to the theory of singular foliations.
Rather than following the Stefan-Sussmann approach \cite{ste:int,sus:orb}, we will take
the viewpoint of Androulidakis-Skandalis \cite{and:hol} (see also Hermann \cite{her:fol}), and consider a singular foliation on $M$ to be a $C^\infty(M)$-submodule $\J\subset \mf{X}(M)$ such that
\begin{enumerate}
	\item $\J$ is \emph{local}: If a vector field $X\in \mf{X}(M)$ has the property that for all $m\in M$, there
	exists an element of $\J$ equal to $X$ near $m$, then $X\in\J$.
	\item $\J$ is \emph{locally finitely generated}: On some neighborhood of any given $m\in M$, it is spanned by  finitely many vector fields in $\J$.
	\item $\J$ is \emph{involutive}: $[\J,\J]\subset \J$.
\end{enumerate}
Note that  \cite{and:hol} considers $C^\infty(M)$-modules of compactly supported vector fields on $M$; the equivalence to the definition given above is explained in \cite{and:ste} and \cite[Chapter 2]{wan:int}.

A smooth map $\varphi\colon N\to M$ is \emph{transverse to $\J$} if for all $n\in N$, the range of $T_n\varphi$ together with $\{Y_{\varphi(n)},\ Y\in\J\}$ spans the entire tangent space $T_{\varphi(n)}M$. In this case, a singular foliation $\varphi^!\J\subset \mf{X}(N)$ is defined   as the set of all $Y\in \mf{X}(N)$ with the property that  $T\varphi(Y)$ (as a section of the pull-back bundle $\varphi^* TM$) can be written as a locally finite sum $\sum_i h_i X_i$, with $h_i\in C^\infty(N)$ and $X_i\in \J$. See \cite{and:hol}.

If $N$ is a submanifold of $M$ transverse to $\J$, with $i\colon N\to M$ the inclusion,
then $i^!\J$ is simply the set of all $X|_N$ such that $X\in \J$ is tangent to $N$. Letting
$p\colon \nu(M,N)\to N$ be the projection as in \eqref{eq:pi}, we may regard the foliation \[ \nu(\J)=p^!i^!\J\]
of $\nu(M,N)$ as the linear approximation  for the foliation $\J$ around $N$.
\begin{theorem}[Splitting theorem for singular foliations]\label{th:split-foliations}
Let $\J$ be a singular foliation on $M$, and let $N\subset M$ be a submanifold transverse to $\J$.
Then there exists a tubular neighborhood embedding $\psi\colon \nu(M,N)\to M$
such that $\psi^!\J=\nu(\J)$.
Given a proper action of a Lie group $G$ on $M$, preserving the foliation and the transverse submanifold $N$,  one can take the tubular neighborhood embedding to be $G$-invariant.
\end{theorem}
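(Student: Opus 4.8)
The plan is to construct an Euler-like vector field $X$ for $(M,N)$ that is \emph{adapted} to the foliation $\J$, meaning $X\in\J$, and then to run the machinery of Theorem~\ref{th:euler} and its proof, checking that the resulting tubular neighborhood embedding $\psi=\kappa\circ\varphi_{-1}\circ j$ intertwines the foliations. First I would observe that the key mechanism is already visible in the proof above: the flow $\varphi_s$ of $W$ on $\D(M,N)$ maps the zero fiber $\nu(M,N)$ to the fiber $\pi^{-1}(-s)\cong M$, and since $[W,\D(X)]=0$ for our chosen $X$, it intertwines $\D(X)|_{\nu(M,N)}=\nu(X)=\E$ with $X$. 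What I want in addition is that $\varphi_s$ carries the \emph{whole module} $\nu(\J)$ (lifted appropriately to $\D(M,N)$) to $\J$. Concretely, I would lift the module $\J$ to a module $\D(\J)$ on $\D(M,N)$: for a generator $Y\in\J$ tangent to $N$ use $\D(Y)$, and for a generator $Y\in\J$ \emph{not} tangent to $N$ use $\wh Y$ (well, $\tfrac1t\wh Y$ is not defined, so one uses $\wh Y$ itself, noting $\wh Y = tY$ on $M\times\R^\times$). Using the commutation relations \eqref{eq:yhat} and \eqref{eq:dy}, together with $X\in\J$ and involutivity $[\J,\J]\subset\J$, one checks that the flow of $W$ preserves this lifted module up to the $C^\infty$-module structure and the tangency-to-$N\times\R$ ambiguity, exactly as in the normal-bundle computation in the proof of Theorem~\ref{th:euler}. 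Matching the zero fiber with $\pi^{-1}(-1)$ then gives $\psi^!\J=\nu(\J)$.

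The existence of an adapted Euler-like $X\in\J$ is the first technical point. Because $N$ is transverse to $\J$, locally near a point of $N$ one can choose finitely many generators $X_1,\dots,X_k\in\J$ whose restrictions to $N$, together with $TN$, span $TM|_N$; a local vector field of the form $\sum_i f_i X_i$ with suitable coefficients vanishing on $N$ can be arranged to have linear approximation the Euler field along $N$. Patching these local choices with a partition of unity and using locality of $\J$, one gets a (possibly incomplete) $X\in\J$ which is Euler-like for $(M,N)$; multiplying by a bump function equal to $1$ near $N$ keeps it in $\J$ (which is a $C^\infty(M)$-submodule) and makes it complete, as in \cite[Rem.~2.9]{bur:spl}. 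One should also verify that $\J$ restricted to the attracting set $U=\psi(\nu(M,N))$ is what gets identified, which is harmless since the statement is local around $N$.

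For the $G$-equivariant refinement, the strategy is to make \emph{every} step above $G$-invariant. The group $G$ acts properly on $M$ preserving $N$ and $\J$; properness lets us average. First I would build the local adapted generators, then average the resulting incomplete Euler-like section over $G$ using a Haar-type integral against the proper action (more precisely, using a $G$-invariant cutoff supported near the orbit of $N$), to obtain a $G$-invariant Euler-like $X\in\J$ — here one uses that $\J$ is $G$-stable and a $C^\infty(M)$-module, and that averaging a vector field that is Euler-like along $N$ over a group fixing $N$ yields another Euler-like field, since the Euler condition \eqref{eq:eulerlikecond} is $G$-invariant (as $G$ acts linearly on $\nu(M,N)$ and fixes $\E$). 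Then the induced vector field $W$ on $\D(M,N)$ is automatically $G$-invariant (the functor $\D$ is natural, property \eqref{it:i}, and $W$ is built canonically from $X$ and $\Theta$), so its flow $\varphi_s$ is $G$-equivariant, and hence $\psi=\kappa\circ\varphi_{-1}\circ j$ is $G$-equivariant.

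The main obstacle I anticipate is the bookkeeping in the second paragraph: verifying cleanly that the lifted module $\D(\J)$ on the deformation space is preserved by the flow of $W$, keeping careful track of which generators are tangent to $N$ and which are not, and of the module-structure and tangency-to-$N\times\R$ corrections in \eqref{eq:yhat}–\eqref{eq:dy}. The conceptual content is exactly the normal-bundle computation already carried out in the proof of Theorem~\ref{th:euler} (the flow preserves $\wh Y$ modulo vector fields tangent to $N\times\R$, and rescales by $s$), but one must argue that "modulo tangent to $N\times\R$" and "modulo the $C^\infty$-module structure" together yield precisely the pullback foliation $\varphi^!\J$ in the sense of Androulidakis–Skandalis — i.e., one must match the intrinsic definition of $\nu(\J)=p^!i^!\J$ with what comes out of parallel transport. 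Once that dictionary is in place, the equivariant case costs essentially nothing beyond a standard averaging argument justified by properness.
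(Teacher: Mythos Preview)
Your overall strategy---construct an Euler-like $X\in\J$, pass to the deformation space, and use the flow of $W$---matches the paper exactly, including the equivariant refinement by averaging. The difference is in the step you flag as ``the main obstacle'': you propose to define an ad hoc lifted module $\D(\J)$ via the generators $\D(Y)$ and $\wh Y$ and then track the flow of $W$ on these generators using the commutator formulas \eqref{eq:yhat}--\eqref{eq:dy}. The paper bypasses this entirely. It works with the pullback foliation $\kappa^!\J$ on $\D(M,N)$ and observes that $\Theta\in\kappa^!\J$ (since $\Theta\sim_\kappa 0$) and $\D(X)\in\kappa^!\J$ (since $\D(X)\sim_\kappa X\in\J$), so $W=t^{-1}(\Theta+\D(X))\in\kappa^!\J$ as well. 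Then one invokes the general fact (Androulidakis--Skandalis) that the flow of any vector field \emph{belonging to} a singular foliation preserves that foliation. Since $j$, $\varphi_{-1}$, and $\kappa$ are each foliation-preserving, so is their composition $\psi$. This replaces all of your anticipated bookkeeping with a single membership check; your approach would work but is strictly harder, and your ``lifted module'' does not quite coincide with $\kappa^!\J$ (it misses, e.g., $\Theta$ and $\partial_t$), which is part of why the dictionary you worry about is awkward to set up.
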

\begin{proof}
We first show that the $C^\infty(M)$-submodule $\J$ contains an Euler-like vector field $X$.
Let  $r=\dim M-\dim N$.
Transversality implies that for all $m\in N$, there is an open neighborhood $V\subset M$
and vector fields $Y_1,\ldots,Y_r\in\J|_V$,
spanning an $r$-dimensional subbundle $K\subset TM|_V$, such that $TM|_{N\cap V}=K|_{N\cap V}\oplus TN|_{N\cap V}$. By \cite[Lemma~3.9]{bur:spl}, there exists a section of $K$ which vanishes along $N$ and is Euler-like. Since such a section is a $C^\infty(V)$-linear combination of the $Y_j$, it lies in
$\J|_V$. This shows how to construct Euler-like vector fields $X_V\in \J|_V$ locally; using a partition of unity we may patch the local definitions to a global Euler-like vector field, $X\in \J$.
Given a $G$-action preserving all the data, one can produce a $G$-invariant $X\in\J$, by averaging
over slices for the action.

The vector fields $\Theta$ and
$\D(X)$ on the  deformation space $\D(M,N)$ lie in $\kappa^!\J$, since
$\Theta\sim_\kappa 0$ and $\D(X)\sim_\kappa X$. Hence, the vector field $W=t^{-1}(\Theta+\D(X))$ also lies in $\kappa^!\J$, which implies that its flow $\varphi_s$
on $\D(U,N)\subset \D(M,N)$ (where $W$ is complete) preserves the foliation \cite[Prop.~1.6]{and:hol} (see also \cite{gar:inn}). Each of the maps
\[
\nu(M,N)\stackrel{j}{\lra} \D(U,N)\stackrel{\varphi_{-1}}{\lra} \D(U,N)\stackrel{\kappa}{\lra} U
\]
is foliation preserving; hence, so is their composition $\psi$.
\end{proof}
\begin{remark}
A \emph{leaf} of a singular foliation $\J$ is a maximal connected submanifold $S\subset M$ with the property that the vector fields in $\J$ are all tangent to $S$, and span all of $TS$. Given $m\in M$, and letting $N$ be a submanifold through $m$ with $T_mN$ a complement to the span of $X_m,\ X\in\J$, we have that $\{m\}$ is a leaf of $\nu(\J)=i^!\J$, and so the fiber $\nu(M,N)_m$ is a
leaf of $p^!i^!\J$. Hence, we recover Hermann's theorem \cite[Theorem 2.2]{her:fol} that every point of $M$ is contained in a (unique) leaf. We also obtain a $G$-equivariant version of this result (for point $m$ fixed under the $G$-action); note that the traditional proofs of Hermann's theorem, by inductively building local normal forms, do not lend themselves to averaging under $G$-actions.
\end{remark}

\section{The splitting theorem for anchored vector bundles}\label{sec:splitla}
In this section, we revisit the splitting theorem for \emph{involutive anchored vector bundles} \cite[Theorem~3.13]{bur:spl}, reformulating its proof in terms of deformation spaces. This will lay the groundwork
for the more complicated cases of $L_\infty$-algebroids and Courant algebroids, to be discussed in the subsequent sections.

An {\em anchored vector bundle} is a vector bundle
 $E\to M$ together with a bundle map $\a\colon E\to TM$ covering the identity map on $M$, called the \emph{anchor}. We denote by $\mf{aut}_{\ca{AV}}(E)$ the infinitesimal vector bundle automorphisms preserving $\a$;  in terms of the associated linear operator $D$ on $\Gamma(E)$ (cf.~ Section \ref{subsec:convent}), the compatibility with the anchor is  expressed by the condition
\[ [X,\a(\tau)]=\a(D\tau),
\]
where $X$ is the symbol of $D$.

The anchored vector bundle $E$  is called \emph{involutive} if the image of its space of sections under the anchor map is a Lie subalgebra $\J=\a(\Gamma(E))\subset \mf{X}(M)$. It follows that it determines a singular foliation on $M$ (see ~Section \ref{subsec:foliation}).
As shown in \cite{bur:spl,lau:uni}, the involutivity condition is equivalent to the existence of a
skew-symmetric bracket $[\cdot,\cdot]$ on $\Gamma(E)$, with the property
that for all $\sigma\in\Gamma(E)$, the operator $[\sigma,\cdot]$ satisfies the Leibniz rule
\[ [\sigma,f\tau]=f[\sigma,\tau]+(\L_{\a(\sigma)}f)\,\tau,\]
as well as
\[ \a([\sigma,\tau])=[\a(\sigma),\a(\tau)].\]
In fact, one may take $[\sigma,\tau]=\nabla_\sigma\tau-\nabla_\tau\sigma$ where $\nabla$ is a torsion-free $\a$-connection (for details, see
\cite[Proposition 3.17]{bur:spl}).
The infinitesimal automorphism defined by $D=[\sigma,\cdot]$, lifting the
vector field $\a(\sigma)$,
 will be denoted by
\begin{equation}\label{eq:tsigma}
\wt{\a}(\sigma)\in\mf{aut}_{\ca{AV}}(E).\end{equation}
\begin{remark}
The structure of an anchored vector bundle, with a skew-symmetric bracket satisfying the Leibniz rule but not necessarily the Jacobi identity, is referred to in the literature as a {\em pre-Lie algebroid} \cite{gra:lie} or \emph{almost Lie algebroid}  \cite{gra:pon,gru:coh}.
In general, the flow of $\wt{\a}(\sigma)\in\mf{aut}_{\ca{AV}}(E)$ need not preserve the bracket $[\cdot,\cdot]$. However, if  $[\cdot,\cdot]$ also satisfies the Jacobi identity,
so that $E$ with these structures is a \emph{Lie algebroid}, then $D=[\sigma,\cdot]$ is a derivation of the bracket, and hence
\begin{equation}\label{eq:tsigma1}
 \wt{\a}(\sigma)\in \mf{aut}_{\ca{LA}}(E),
 \end{equation}
 where $\mf{aut}_{\ca{LA}}(E)$ are the infinitesimal Lie algebroid automorphisms. Hence, in this case the flow of $\wt{\a}(\sigma)$ does preserve the bracket.
\end{remark}

For the remainder of this section, we let $E$ be an involutive anchored vector bundle, and fix a bracket $[\cdot,\cdot]$ on $\Gamma(E)$ as above. If
$i\colon N\hra M$ is a submanifold such that $\a^{-1}(TN)$ is a submanifold
(e.g., if $\a$ is transverse to $N$), then the pull-back $i^!E=\a^{-1}(TN)$ is an anchored subbundle  of $E$. The usual argument for Lie algebroids shows that $i^!E$ inherits a bracket, in such a way that $[\sigma|_N,\tau|_N]=[\sigma,\tau]|_N$ for all sections such that $\a(\sigma),\a(\tau)$ is tangent to $N$. In particular,
$i^!E$ is again involutive.

More generally, given a smooth map $\varphi\colon N\to M$ transverse to the anchor $\a$, one defines the {\em pull-back} $\varphi^!E\to N$ as the
pull-back of $E\times TN$ under inclusion of the graph, $N\cong \on{Gr}(\varphi)\subset M\times N$. Thus, $\varphi^!E$ is the
fiber product
\begin{equation}\label{eq:!map}
\xymatrix{ \varphi^!E\ar[r]^{\varphi_!}\ar[d] & E\ar[d]_{\a}\\ TN\ar[r]_{T\varphi} & TM,}
\end{equation}
where the left vertical map  becomes the anchor of $\varphi^!E$ (we keep denoting it by $\a$), and where the natural map $\varphi_!$ defines a morphism of anchored vector bundles covering the map $\varphi$. The space of sections of $\varphi^!E$ inherits a bracket from the bracket on $\Gamma(E\times TN)$. In particular, $\varphi^!E$ is again involutive. (When $E$ is a Lie algebroid, this is the usual construction of a pull-back Lie algebroid due to Higgins-Mackenzie \cite{hig:alg}, and $\varphi_{!}$ is then a morphism of Lie algebroids.) If $\varphi$ is the projection from a product $N=M\times Q$ to the first factor, we obtain $\varphi^!E=E\times TQ$; this locally describes the pull-back in the case of a submersion. The following notation will be convenient.

\begin{definition}
Given a section $\sigma\in \Gamma(E)$, together with a vector field $Z$ on $N$ satisfying $Z\sim_\varphi \a(\sigma)$, we denote by
\[ \sigma\times_\varphi Z\in \Gamma(\varphi^! E)\]
the section given by restriction of $\sigma\times Z\in \Gamma(E\times TN)$ to $\on{Gr}(\varphi)$.
\end{definition}
Observe that $\a(\sigma\times_\varphi Z)=Z$ and $(\sigma\times_\varphi Z)\sim_{\varphi}\sigma$. 

Suppose now that $i\colon N\to M$ is an inclusion of a submanifold, transverse to the anchor of $E$. Letting $p\colon \nu(M,N)\to N$ be the projection, we will take
\[ \nu(E)=p^!i^!E\to \nu(M,N)\]
to be the \emph{linear approximation}
of the involutive anchored vector bundle $E$ around $N$. It comes with a
distinguished \emph{Euler section}
\begin{equation}\label{eq:eulersection}
 \epsilon=0\times_p \E\in\Gamma(\nu(E)).
\end{equation}
A section $\tau\in \Gamma(E)$,  such that
$\a(\tau)$ is tangent to $N$ has a well-defined linear approximation
\begin{equation}\label{eq:linearapprox}
 \nu(\tau)=\tau|_N\times_p \nu(\a(\tau)),\end{equation} with $\a(\nu(\tau)) = \nu(\a(\tau))$.

%
A section $\sigma\in \Gamma(E)$ will be called \emph{Euler-like} if it vanishes along $N$, and
$\a(\sigma)$  is Euler-like (in particular, $\nu(\sigma)$ is the Euler section $\epsilon$).
The following was proved in \cite[Lemma~3.9]{bur:spl}.
\begin{lemma} \label{EL-ex}
For any submanifold $N\subset M$ transverse to the anchor $\a\colon E\to TM$, there exists an Euler-like section $\sigma\in \Gamma(E)$. Given a proper $G$-action on $M$, preserving $N$ and lifting to an action on $E$, one can take $\sigma$ to be $G$-invariant.
\end{lemma}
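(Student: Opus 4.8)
The plan is to reduce the existence of an Euler-like section to a local-to-global patching argument, using the transversality hypothesis to build local Euler-like sections and a partition of unity to glue them. First I would recall the fundamental lemma of \cite{bur:spl} (cited as \cite[Lemma~3.9]{bur:spl} already in the proof of Theorem \ref{th:split-foliations}): for a vector bundle $K\to M$ whose anchor is transverse to $N$ and whose rank equals $\operatorname{codim}(M,N)$, there exists a section of $K$ vanishing along $N$ and Euler-like. Concretely, near a point $m\in N$ transversality gives an open set $V$, sections spanning a complement $K\subset TM|_V$ to $TN$ along $N\cap V$, and hence, via the anchor, a local section $\sigma_V\in\Gamma(E|_V)$ with $\sigma_V|_{N\cap V}=0$ and $\a(\sigma_V)$ Euler-like. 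The key point is that being Euler-like is a condition only on the $1$-jet along $N$: $\sigma$ is Euler-like iff $\sigma|_N=0$ and, for every $f\in C^\infty(M)$ with $f|_N=0$, the function $\L_{\a(\sigma)}f-f$ vanishes to second order along $N$.

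Next I would carry out the gluing. Choose a locally finite open cover $\{V_\alpha\}$ of a neighborhood of $N$ together with local Euler-like sections $\sigma_\alpha\in\Gamma(E|_{V_\alpha})$ as above, and a subordinate partition of unity $\{\rho_\alpha\}$ with $\sum_\alpha\rho_\alpha\equiv 1$ near $N$. Set $\sigma=\sum_\alpha\rho_\alpha\sigma_\alpha$, extended by a section vanishing near $N$ and supported where needed away from $N$; shrinking by a bump function equal to $1$ near $N$ makes $\a(\sigma)$ complete (as in the discussion after \eqref{eq:eulerlikecond} and \cite[Rem.~2.9]{bur:spl}). Then $\sigma|_N=\sum_\alpha\rho_\alpha\sigma_\alpha|_N=0$, and since the defining condition on the $1$-jet of an Euler-like section is convex (a $C^\infty(M)$-affine combination of sections whose symbols are Euler-like, all vanishing on $N$, is again of this type: $\a$ is $C^\infty(M)$-linear, $\nu$ is additive on sections vanishing on $N$, and $\nu(\rho_\alpha\sigma_\alpha)=\rho_\alpha|_N\,\nu(\sigma_\alpha)$ while $\nu(\a(\sigma_\alpha))=\E$ for each $\alpha$), we get $\nu(\a(\sigma))=\sum_\alpha(\rho_\alpha|_N)\E=\E$. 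Hence $\sigma$ is Euler-like.

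For the equivariant statement, assume a proper $G$-action on $M$ preserving $N$ and lifting to $E$; then $\a$ is $G$-equivariant. Start from any Euler-like $\sigma$ produced above. Properness of the action on $M$ gives, for the closed invariant set $N$, a $G$-invariant function $\chi\in C^\infty(M)^G$ equal to $1$ near $N$ with $G$-compact support, and one can average $\chi\sigma$ over the group: either integrating against a Haar measure on compact isotropy groups and using a slice theorem, or — as phrased in the proof of Theorem \ref{th:split-foliations} — "by averaging over slices for the action." Invariance of $N$ and of the Euler condition on $1$-jets (the Euler vector field $\E$ on $\nu(M,N)$ is canonical, hence $G$-invariant) ensures the averaged section is still Euler-like, now $G$-invariant.

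The main obstacle I expect is purely the bookkeeping in the gluing step: verifying that the Euler-like condition — which is a statement about second-order vanishing of $\L_{\a(\sigma)}f-f$, not about $\sigma$ itself — is preserved under $C^\infty(M)$-affine combinations. This is exactly where one must use that $\nu(-)$ is additive on sections with vanishing restriction to $N$ and interacts with multiplication by functions via restriction to $N$, i.e. the linearity properties recorded around \eqref{eq:linearapprox} and in property \eqref{it:iv} of Section \ref{subsec:def}. The completeness fix and the equivariant averaging are by now standard and cause no real difficulty.
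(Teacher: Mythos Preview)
Your proposal is correct and matches the paper's approach: the paper does not give an independent proof of this lemma but simply cites \cite[Lemma~3.9]{bur:spl}, and your argument is exactly the one used there --- local Euler-like sections obtained from a rank-$r$ complement via transversality, patched by a partition of unity using the convexity of the Euler-like condition on $1$-jets, then averaged for the $G$-equivariant version. Indeed, the same outline appears verbatim in the paper's proof of Theorem~\ref{th:split-foliations} for the singular-foliation case, which you correctly identified as a template.
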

The transversality of $\a\colon E\to TM$ to the inclusion $i\colon N\to M$ also implies its transversality  to the map $\kappa\colon \D(M,N)\to M$. Hence we obtain an involutive anchored vector bundle
\[ \D(E)=\kappa^!E\to \D(M,N)\]
over the deformation space, with
\begin{equation}\label{eq:open}
\D(E)|_{M\times \R^\times}=E\times T\R^\times,\ \ \ j^!\D(E)=\nu(E).\end{equation}
This follows because over $M\times \R^\times$, the map $\kappa$ is just the projection, and
since its restriction to the zero fiber is $i\circ p$. We will need two types of sections of $\D(E)$:
\begin{enumerate}
\item\label{it:2c} The vector field $\Theta$ on the deformation space gives a section
\[
\theta=0\times_\kappa \Theta\in \Gamma(\D(E)),
\]
with $\a(\theta)=\Theta$. In terms of \eqref{eq:open}, its restriction to $M\times\R^\times$
is the section $0\times t\f{\p}{\p t}$ of $E\times T\R^\times$, while its restriction to
$\nu(M,N)$ is minus the Euler section, $-\epsilon$.
\item\label{it:2d} A section $\tau\in \Gamma(E)$,  such that
$\a(\tau)$ is tangent to $N$, defines a section
\[ \D(\tau)=\tau\times_\kappa \D(\a(\tau)) \in \Gamma(\D(E)),\]
with $\a(\D(\tau))=\D(\a(\tau))$. Its restriction to $M\times \R^\times$ is $\tau\times 0$, while the restriction to $\nu(M,N)$ is $\nu(\tau)$.
\end{enumerate}

\begin{lemma} If $\sigma\in \Gamma(E)$ is Euler-like, then the section $\f{1}{t}\sigma+\f{\p}{\p t}$ of
$E\times T\R^\times\cong \D(E)|_{M\times \R^\times}$ extends smoothly to a section
\[
w\in \Gamma(\D(E))
\]
satisfying $\a(w)=W$ and $[w,\D(\sigma)]=0$.
\end{lemma}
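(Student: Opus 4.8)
The plan is to mirror the construction of $W$ from Lemma~\ref{lem:W} one level up, at the level of sections rather than vector fields. First I would recall the two distinguished sections: $\theta = 0\times_\kappa \Theta \in \Gamma(\D(E))$ from item~\eqref{it:2c}, whose restriction to the zero fiber $\nu(M,N)$ is $-\epsilon$, and $\D(\sigma) = \sigma \times_\kappa \D(\a(\sigma)) \in \Gamma(\D(E))$ from item~\eqref{it:2d}, which makes sense because $\sigma$ is Euler-like and hence $\a(\sigma)$ is tangent to $N$; its restriction to the zero fiber is $\nu(\sigma) = \epsilon$, since $\sigma$ is Euler-like. Therefore the sum $\D(\sigma) + \theta \in \Gamma(\D(E))$ restricts to $\epsilon + (-\epsilon) = 0$ along the hypersurface $\pi^{-1}(0) = \nu(M,N)$. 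Since $\D(\sigma) + \theta$ is a section vanishing along the smooth hypersurface $\{t = 0\}$, it is divisible by $t$ in $\Gamma(\D(E))$, and I would set
\[
w = \tfrac{1}{t}\bigl(\D(\sigma) + \theta\bigr) \in \Gamma(\D(E)).
\]
(To see the divisibility cleanly one can work in local trivializations over the charts of Appendix~\ref{app:coord}: a smooth section of a vector bundle that vanishes on the zero set of the coordinate function $t$ has all components divisible by $t$.)

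Next I would compute $\a(w)$. Applying the anchor, which is $C^\infty$-linear and commutes with restriction, gives $t\,\a(w) = \a(\D(\sigma)) + \a(\theta) = \D(\a(\sigma)) + \Theta$. But $\a(\sigma)$ is Euler-like by hypothesis, so by the proof of Lemma~\ref{lem:W} we have exactly $\f1t(\D(\a(\sigma)) + \Theta) = W$. Hence $\a(w) = W$, and since dividing by $t$ is injective on sections (equivalently, on $M\times\R^\times$ the identity is the obvious one, and it propagates by continuity), this pins down $w$ uniquely as the stated extension of $\f1t\sigma + \f{\p}{\p t}$.

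Finally, for the bracket relation $[w,\D(\sigma)] = 0$: on the open dense subset $M\times\R^\times$, where $\D(E) = E\times T\R^\times$, $w$ is $\f1t\sigma + \f{\p}{\p t}$ and $\D(\sigma)$ is $\sigma\times 0$, so
\[
[w,\D(\sigma)]\big|_{M\times\R^\times} = \bigl[\tfrac1t\sigma + \tfrac{\p}{\p t},\ \sigma\times 0\bigr] = \tfrac1t[\sigma,\sigma] + \bigl(\L_{\p/\p t}\tfrac1t\bigr)\,\sigma = -\tfrac1{t^2}\sigma - \tfrac1{t^2}\bigl(-\sigma\bigr)\cdot(-1)\ldots
\]
here I should be more careful: $[\f{\p}{\p t}, \f1t\sigma] = (\L_{\p/\p t}\f1t)\sigma = -\f1{t^2}\sigma$ using the Leibniz rule for the bracket, $[\f1t\sigma,\f1t\sigma] = \f1{t^2}[\sigma,\sigma] + \text{(anchor terms)}$, and since $\a(\f1t\sigma) = \f1t\a(\sigma)$ acts trivially in the $\R^\times$ direction, skew-symmetry gives $[\f1t\sigma, \sigma] = \f1t[\sigma,\sigma] = 0$; combining, $[w,\D(\sigma)]|_{M\times\R^\times} = 0$. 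Since $M\times\R^\times$ is dense in $\D(M,N)$ and the bracket of two smooth sections is smooth, $[w,\D(\sigma)] = 0$ everywhere by continuity. The main obstacle is making the divisibility-by-$t$ step rigorous — that a smooth section of $\D(E)$ vanishing on $\pi^{-1}(0)$ is of the form $t\,w'$ for a unique smooth $w'$; I expect this to follow either from Hadamard's lemma applied in the coordinate charts of Appendix~\ref{app:coord} (after a local trivialization of $\kappa^!E$), or from the analogous statement for functions combined with local frames, exactly paralleling the argument that $\D(\sigma)+\theta$ vanishes along the zero fiber. Everything else is a routine computation on $M\times\R^\times$ followed by a continuity/density argument.
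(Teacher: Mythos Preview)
Your proposal is correct and follows the same approach as the paper: set $w = \tfrac{1}{t}(\D(\sigma) + \theta)$, noting that the sum restricts to $\epsilon - \epsilon = 0$ on $\pi^{-1}(0)$ and is therefore divisible by $t$. The paper's proof is actually terser than yours---it only spells out the divisibility step and leaves $\a(w)=W$ and $[w,\D(\sigma)]=0$ implicit; your verification of the bracket relation is a bit muddled (you compute $[\tfrac{\partial}{\partial t}, \tfrac{1}{t}\sigma]$ rather than the needed $[\tfrac{\partial}{\partial t}, \sigma\times 0]$), but since the latter vanishes trivially and $[\tfrac{1}{t}\sigma,\sigma]=0$ by skew-symmetry plus the fact that $\a(\sigma)$ does not act on $t$, the conclusion stands.
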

\begin{proof}
By the above, $\D(\sigma)|_{\nu(M,N)}=\nu(\sigma)=\epsilon$. Hence, the section $\D(\sigma)+\theta$ satisfies \[ (\D(\sigma)+\theta)|_{\nu(M,N)}=
\epsilon-\epsilon=0;\]
it is therefore divisible by $t$.
\end{proof}

The section $w$ defined in the previous lemma gives rise to an infinitesimal automorphism (see \eqref{eq:tsigma})
\[
\wt{W}=\wt{\a}(w)\in\mf{aut}_{\ca{AV}}(\D(E))
\]
with base vector field $W=\a(w)$. Since $W$ is complete over $\D(U,N)$, the vector field
$\wt{W}$ is complete on $\D(E|_U)=\D(E)|_{\D(U,N)}$. Its flow $\wt{\varphi}_s$ is by automorphisms of anchored vector bundles, with base map $\varphi_s$ the flow of $W$. The map $\varphi_{-1}$
intertwines the inclusion $j_0$ of $\nu(M,N)=\pi^{-1}(0)$ with the inclusion $j_1$ of
$U=\pi^{-1}(1)\cap \D(U,N)$. Hence it induces a unique isomorphism of anchored vector bundles
$\wt{\psi}\colon j_0^!\D(E|_U)\to j_1^!\D(E|_U)$, such that
$$(j_1)_!\circ \wt{\psi}=\wt{\varphi}_{-1}\circ (j_0)_!.$$
Using the canonical identifications
$j_0^!\D(E|_U)\cong \nu(E)$, and
$j_1^!\D(E|_U)\cong E|_U$, this is the desired isomorphism  of anchored vector bundles:
\begin{equation}\label{eq:LAsplit11} \xymatrix{ \nu(E)\ar[r]^{\wt{\psi}} \ar[d] & E|_U\ar[d]\\ \nu(M,N)\ar[r]_{\psi} & U.
}
\end{equation}
Since $\kappa\circ j_1=\on{id}_M$, the morphism $\wt{\psi}$ may alternatively be described
as the following composition of morphisms of anchored vector bundles:
\begin{equation}\label{eq:LAsplit12} \xymatrix@C=8ex{ \nu(E)\ar[r]^{{j}_!}\ar[d] & \D(E|_U) \ar[r]^{\wt{\varphi}_{-1}}\ar[d] & \D(E|_U) \ar[r]^{{\kappa}_!}\ar[d]
& E|_U \ar[d]\\
\nu(M,N) \ar[r]_j  & \D(U,N) \ar[r]_{\varphi_{-1}} & \D(U,N) \ar[r]_\kappa & U.
}
\end{equation}
Here the maps ${\kappa}_!,\ j_!$ comes from the definition $\D(E)=\kappa^!E$ (see \eqref{eq:!map}) and
the identification $\nu(E)= j^!\D(E)$.

An additional property of this construction, not explicit in \cite{bur:spl}, is the following.
Given a section $\tau$ with $\a(\tau)$ tangent to $N$ and $[\sigma,\tau]=0$, we have
$[w,\D(\tau)]=0$. (This is obvious over $M\times\R^\times$, and hence holds globally.)
It follows that the flow  $\wt{\varphi}_s$ preserves the section $\D(\tau)$, and therefore $\wt{\psi}\circ \nu(\tau)= \tau|_U\circ \psi$, or
\[ \wt\psi^*\tau=\nu(\tau),\]
where we put $\wt\psi^*\tau=\wt{\psi}^{-1}\circ \tau\circ \psi$.
In particular,
\[ \wt\psi^*\sigma=\epsilon.\]
We also refer to the isomorphism \eqref{eq:LAsplit11} as a \emph{splitting theorem}, since a (local) trivialization of the normal bundle $\nu(M,N)=N\times \mathsf{P}$ identifies $\nu(E)=p^!i^!E$ with a product $i^!E\times T\mathsf{P}$. In particular, given $m\in M$ one may take $N$ to be a `small' submanifold passing through $m$ and with $T_mN$ complementary to the range of the anchor map at $m$.

Throughout this discussion, we did not assume the Jacobi identity for
the bracket $[\cdot,\cdot]$. If the Jacobi identity holds, so that $E$ is a Lie algebroid, then $\wt{\a}(w)$ is an infinitesimal Lie algebroid automorphism. Hence, the flow $\wt{\varphi}_s$ is by Lie algebroid automorphisms, and the resulting map $\wt{\psi}\colon \nu(E)\to E|_U$ is an isomorphism of Lie algebroids (see \cite{fre:sub} for a related construction). For $N$ a small transversal at a given point $m\in M$,  one recovers the splitting theorem for Lie algebroids due to Fernandes, Weinstein, and Dufour  \cite{duf:nor,fer:con,wei:alm}. Among the advantages of our coordinate-free approach are its
functorial properties; as a consequence, one immediately obtains the $G$-equivariant version (for proper actions lifting to Lie algebroid automorphisms), as well as versions for $\ca{VB}$-algebroids, $\ca{LA}$-groupoids, or Lie algebroid representations.

\begin{remark}
The method just described to obtain splitting theorems for involutive anchored vector bundles and Lie algebroids, based on deformation spaces, can also be used in other contexts. For instance, suppose that $\G\rra M$ is a Lie groupoid, and $i\colon N\hra M$ a submanifold transverse to the orbits of $\G$ (cf. \cite[Sec.~4.4]{bur:spl}). Let $E$ be the associated Lie algebroid, and $\sigma\in \Gamma(E)$ a section that is Euler-like. The groupoid $\D(\G)=\kappa^!\G$ has Lie algebroid $\D(E)=\kappa^! E$, and
the section $\D(\sigma)$ defines a vector field on $\D(\G)$, whose local flow is by
groupoid automorphisms. Letting $\nu(\G)=p^!i^!\G$ be the linear approximation of $\G$, one obtains a lift of the tubular neighborhood embedding $\psi$ defined by $\a(\sigma)$ to a Lie groupoid isomorphism
\[ \nu(\G)\to \G|_U,\]
as ${j}_!\colon \nu(\G)\to \kappa^!\G|_{\D(U,N)}$, followed by the time $-1$-flow, followed by ${\kappa}_!$ (the maps ${j}_!$ and ${\kappa}_!$ are defined just as the analogous maps in \eqref{eq:LAsplit12}).
 \end{remark}

\section{The splitting theorem for $L_\infty$-algebroids}\label{sec:splitinfinity}
In recent years, $L_\infty$-algebroids have become increasingly significant
in mathematics and physics. One motivation comes from the theory of singular foliations: as shown in \cite{lau:uni}, Lie-$n$ algebroids provide
a notion of \emph{resolution} of a singular foliation, with $n=0$ corresponding to regular foliations, and $n=1$  corresponding to
foliations coming from Lie algebroids. They also play a role in
string theory \cite{sta:twi}, the deformation
theory of coisotropic submanifolds \cite{oh:def}, higher Poisson geometry \cite{bru:alg},
shifted symplectic geometry \cite{pym:sh}, and representations up to homotopy \cite{jot:lie2}.

\subsection{Definitions}
The shortest definition of an $L_\infty$-algebroid is in super-geometric terms, as
\emph{split $NQ$-manifolds}, i.e., split non-negatively graded manifolds with a homological vector field of degree 1 \cite{sev:so,she:hig,vor:q}. In more classical terms, this is the definition in terms of
the associated \emph{Chevalley-Eilenberg complex}. Consider a non-positively graded vector bundle
\[ E=\bigoplus_{r\le 0}E_r\to M,\]
where each $E_r$ has finite rank (but $E$ itself might be of infinite rank). Let $E[1]$ be the graded bundle with the shifted grading $E[1]_r=E_{r+1}$, and let $E[1]^*=E^*[-1]$ be its (graded) dual, with grading
\[ E[1]^*=\bigoplus_{r> 0}(E[1]^*)_r\to M,\]
where
\[ (E[1]^*)_r=(E[1]_{-r})^*=(E_{1-r})^*.\]
We denote by $\mathsf{C}(E)\to M$ the symmetric coalgebra bundle over $E[1]$ (cf.~ \cite[Appendix A]{cat:rel}); its (graded) dual is the symmetric algebra bundle
\[ \mathsf{C}(E)^*=\on{Sym}(E[1]^*)\to M.\]
Thus, the fibers of $\mathsf{C}(E)^*$ are generated by homogeneous elements in fibers of $E[1]^*$, subject to relations
$xx'=(-1)^{rr'} x'x$ whenever $x\in (E[1]^*)_r,\ x'\in (E[1]^*)_{r'}$.
The grading of $\mathsf{C}(E)^*$ is such that the inclusion of $E[1]^*$ is degree preserving. The algebra bundle $\mathsf{C}(E)^*$ also carries another grading coming from the symmetric power degree; we will adopt the terminology of \cite{pym:sh} and call this the \emph{weight}. The subspace of elements of weight
$n$ is a graded subspace $C^n(E)^*=\oplus_{r\ge 0} C^n(E)^*_r$; note that
\begin{equation}\label{eq:zero}
C^n(E)^*_r=0 \mbox{\ \ \ for \ \ } r<n.\end{equation}
Clearly, $C^0(E)^*=M\times \R$ and $C^1(E)^*=E[1]^*$. Every section $\sigma\in \Gamma(E[1]_r)=\Gamma(E_{r+1})$ defines a degree $r$ derivation $\iota(\sigma)$ of the algebra $\Gamma(\mathsf{C}(E)^*)$ lowering the weight by $1$; on $\Gamma(E[1]^*)$ it is the obvious pairing.

\begin{definition}\begin{enumerate}
\item
An \emph{$L_\infty$-algebroid structure} on $E\to M$ is a degree $1$
derivation $\d$ of the algebra $\Gamma(\mathsf{C}(E)^*)$ with $\d\circ \d=0$.
If $E_{1-r}=0$ for $r>k$, then $E$ is also called a $\on{Lie}_k$- (or $L_k$-) algebroid.
\item
Suppose $E\to M$ and $F\to N$ are
$L_\infty$-algebroids. A \emph{morphism of $L_\infty$-algebroids}
\[ \Phi\colon F\da  E\]
is given by a morphism of graded vector bundles
\[\Phi\colon \mathsf{C}(F)\to \mathsf{C}(E)\]  such that the pullback map on sections of the dual bundle $\Phi^*\colon \Gamma(\mathsf{C}(E)^*)\to \Gamma(\mathsf{C}(F)^*)$ is a morphism of differential graded algebras (cf. \cite[Thm.~3]{bon:ca}).
\end{enumerate}
\end{definition}
\begin{example}
If $M=\pt$, the definition of $L_\infty$-algebroid reduces to that of an
\emph{$L_\infty$-algebra}. On the other hand, if $E_r=0$ for $r\neq 0$ one recovers the definition of a  Lie algebroid in terms of its Chevalley-Eilenberg complex \cite{vai:lie}.
The anchor map $\a\colon E\to TM$ of an $L_\infty$-algebroid
(see \eqref{eq:linftyanchor} below)
is an example of an $L_\infty$-morphism.
\end{example}

Unpacking the data of an $L_\infty$-structure on $E=\bigoplus_{r\le 0} E_r$
reveals a rich geometric structure \cite{bon:ca,she:hig}. Let  $\d_n$ be the component of $\d$ raising the weight by $n$:
\[ \d_n\colon \Gamma(E[1]^*)\to \Gamma(C^{1+n}(E)^*).\]
By \eqref{eq:zero}, and since every element of degree $r$ is a sum of products of generators
of degree $\le r$, the operator $\d_n$ vanishes on elements $x$ of degree $r<n-1$. Hence,
on any element of given degree only finitely many terms of
\[ \d=\sum_{n\ge 0} \d_n\]
act non-trivially. Similarly, we see that all $\d_n$ with $n\neq 1$ must vanish on $C^\infty(M)$. Hence, by the derivation property, the $\d_n$   are $C^\infty(M)$-linear maps except for $n=1$. One defines the \emph{anchor map}
\begin{equation}\label{eq:linftyanchor}
\a\colon E\to TM
\end{equation}
in terms of its action on sections as
\[ \L_{\a(\sigma)}f=\iota(\sigma)\d f;\]
by the above it is non-vanishing only on $E_0$. The restriction of $\d_0$ to sections of weight $1$ is dual to a degree $1$ map $\delta\colon E\to E$,
defining a complex of vector bundles
\begin{equation}\label{eq:complex}
 \cdots \stackrel{\delta}{\lra}
E_{-2}\stackrel{\delta}{\lra}
E_{-1}\stackrel{\delta}{\lra}
E_{0}\lra 0;\end{equation}
one finds that $\a\circ \delta=0$. Finally, one has the $n$-brackets for $n\ge 2$,
\begin{equation}\label{eq:nbrackets}
 [\cdot,\cdot,\ldots,\cdot]_n\colon \Gamma(E)\times \cdots \times \Gamma(E)\to \Gamma(E),
\end{equation}
given by Voronov's \emph{derived bracket} construction  \cite{vor:hig}:
\[ \iota([\sigma_1,\ldots,\sigma_n]_n)= [[\ldots [\d_{n-1},\iota(\sigma_1)],\iota(\sigma_2)],\ldots,\iota(\sigma_n)];\]
alternatively these may be regarded as the components of the co-derivation $Q\colon \Gamma(\mathsf{C}(E))\to \Gamma(\mathsf{C}(E))$ dual to $\d$.
The binary bracket $[\cdot,\cdot]=[\cdot,\cdot]_2$ on $\Gamma(E)$ satisfies the Leibniz rule
\[ [\sigma,f\tau]=f[\sigma,\tau]+(\L_{\a(\sigma)}f)\tau\]
while the higher brackets $[\cdot,\ldots,\cdot]_n$ for $n\ge 3$ are all $C^\infty(M)$-linear. These brackets satisfy a sequence of `higher Jacobi identities', and compatibilities with $\delta$;
see \cite{bon:ca} for a detailed discussion. The anchor vanishes on $n$-brackets for $n\ge 3$, and satisfies $\a([\sigma,\tau])=[\a(\sigma),\a(\tau)]$; in particular, an $L_\infty$-algebroid $E$ determines a singular foliation $\a(\Gamma(E_0))$
on $M$.

Suppose that $F\da E$ is an $L_\infty$-morphism, given by a bundle map $\Phi\colon \mathsf{C}(F)\to \mathsf{C}(E)$, with base map $\varphi\colon N\to M$. Since $\Phi$ is a fiberwise coalgebra morphism, it may be described in terms of
its components $\Phi^n\colon C^{1+n}(F[1])\to E[1],\ n=0,1,\ldots$.  The bundle map $\Phi^0\colon F[1]\to E[1]$ is a morphism of anchored vector bundles and also intertwines the differentials $\delta$. However this map does \emph{not} intertwine brackets, in general:  one usually obtains a `morphism'  only by taking the higher $\Phi^n$ into account. In the special case that $\Phi^n=0$ for $n\neq 0$, we will say that \emph{$\Phi$ has weight zero}.

The pullback map on sections of the dual bundle $\Phi^*\colon \Gamma(\mathsf{C}(E)^*)\to \Gamma(\mathsf{C}(F)^*)$
is similarly determined by a sequence of maps
\[ (\Phi^n)^*\colon \Gamma(E[1]^*)\to \Gamma(C^{1+n}(F)^*)\]
satisfying $(\Phi^n)^*(f\alpha)=(\varphi^*f)\ ((\Phi^n)^*\alpha)$.

\subsection{Basic constructions}
The usual constructions with Lie algebroids generalize to the $L_\infty$-setting.

\subsubsection{$L_\infty$-subalgebroids}
 Suppose that $E$ is an $L_\infty$-algebroid over $M$, and $F\subset E$ is a graded subbundle along $N\subset M$ such that the kernel of the natural pullback map $\mathsf{C}(E)^*\to \mathsf{C}(F)^*$ is $\d$-closed. Then $\mathsf{C}(F)^*$ inherits a differential, making $F$ into an $L_\infty$-algebroid with an obvious weight zero  $L_\infty$-morphism $F\da E$.  We refer to $F$ as an \emph{$L_\infty$-subalgebroid}.

\subsubsection{Products}
For any two $L_\infty$-algebroids $E\to M,\ E'\to M'$ the product $E\times E'\to M\times M'$ acquires the structure of an $L_\infty$-algebroid.
The differential on $\Gamma(\mathsf{C}(E\times E')^*)$ is such that $\d(\alpha\otimes \alpha')=\d\alpha\otimes \alpha'+(-1)^r \alpha\otimes\d\alpha'$ when $\alpha\in \mathsf{C}(E),\ \alpha'\in \mathsf{C}(E')$ have degrees $r,r'$, respectively.
Projections to the two factors define $L_\infty$-morphisms of weight zero,
\[ \pr_E\colon E\times E'\da E,\ \ \ \pr_{E'}\colon E\times E'\da E'.\]

\subsubsection{Restriction to submanifolds}
Let $E\to M$ be an $L_\infty$-algebroid, and $N\subset M$ a submanifold such that $i^!E=\a^{-1}(TN)$ is smooth. Note that $(i^!E)_0=i^!(E_0)$ (the pull-back as an anchored vector bundle), while the summands
$(i^!E)_r$ for $r< 0$ are simply the restrictions $E_r|_N$ as vector bundles.
\begin{lemma}[Pullback to submanifolds] \label{lem:linftypull}
If $i\colon N\hra M$ is a smooth submanifold such that $i^!E=\a^{-1}(TN)$ is smooth, then the subbundle  $i^!E=\a^{-1}(TN)$  inherits a unique $L_\infty$-algebroid structure for which it is an $L_\infty$-subalgebroid.
\end{lemma}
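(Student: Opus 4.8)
The plan is to reduce the whole statement to one algebraic check. Uniqueness is immediate: the natural map $\mathsf{C}(E)^*\to\mathsf{C}(F)^*$ is a surjective bundle map of algebras over $i\colon N\hra M$, so any differential on $\Gamma(\mathsf{C}(F)^*)$ making this a chain map must be the descent of $\d$, and that descent automatically squares to zero. For existence, by the definition of $L_\infty$-subalgebroid it suffices to show that
\[ \mathcal I:=\ker\big(\Gamma(\mathsf{C}(E)^*)\to\Gamma(\mathsf{C}(F)^*)\big)\]
is a differential ideal, $\d\mathcal I\subseteq\mathcal I$; here $F=i^!E=\a^{-1}(TN)$ is a graded subbundle along $N$, agreeing with $E|_N$ in negative degrees and with $F_0=\a^{-1}(TN)\cap E_0|_N$ in degree $0$.

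The first move is to pin down $\mathcal I$. Since $\mathsf{C}(E)^*=\on{Sym}(E[1]^*)$ is generated over $C^\infty(M)$ by $\Gamma(E[1]^*)$, and since the cokernel of $F[1]\hra E[1]|_N$ is concentrated in degree $1$ (dually, $\ann(F_0)\subset E_0^*|_N$), a local computation identifies $\mathcal I$ as the ideal generated by the functions vanishing on $N$ together with the set $\mathcal J$ of sections $y\in\Gamma(E_0^*)=\Gamma(C^1(E)^*_1)$ whose restriction $y|_N$ annihilates $F_0$. As $\d$ is a derivation, it is then enough to check $\d f\in\mathcal I$ for $f|_N=0$ and $\d y\in\mathcal I$ for $y\in\mathcal J$.

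For a function $f$ with $f|_N=0$ one has $\d f=\d_1 f\in\Gamma(E_0^*)$ (all other $\d_n$ kill functions), characterized by $\langle\d f,\sigma\rangle=\L_{\a(\sigma)}f$; pointwise along $N$ this says $\langle(\d f)_x,e\rangle=\langle (df)_x,\a(e)\rangle$, which vanishes for $e\in (F_0)_x$ because then $\a(e)\in T_xN$ and $df$ kills $T_xN$. Hence $\d f\in\mathcal J\subseteq\mathcal I$. For $y\in\mathcal J$, a weight/degree count ($y$ has weight $1$ and degree $1$, so $\d y$ has degree $2$, and $C^m(E)^*_r=0$ for $r<m$) forces $\d y=\d_0 y+\d_1 y$ with $\d_0 y\in\Gamma(E_{-1}^*)$ and $\d_1 y\in\Gamma(\wedge^2E_0^*)=\Gamma(C^2(E)^*_2)$. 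The term $\d_0 y$ is, up to sign, $\delta^*y$, so $\langle\d_0 y,\tau\rangle=\pm\langle y,\delta\tau\rangle$; since $\a\circ\delta=0$ we have $\delta(E_{-1}|_N)\subseteq F_0$, hence $\d_0 y$ vanishes along $N$ and lies in $\mathcal I$. For $\d_1 y$ I would use that the only pieces of $\d_1$ acting on $\Gamma(E_0^*)$ are the anchor and the binary bracket, giving the Cartan-type identity
\[ \langle\d_1 y,\sigma_1\wedge\sigma_2\rangle=\L_{\a(\sigma_1)}\langle y,\sigma_2\rangle-\L_{\a(\sigma_2)}\langle y,\sigma_1\rangle-\langle y,[\sigma_1,\sigma_2]\rangle. \]
Evaluating on $\sigma_i\in\Gamma(E_0)$ with $\a(\sigma_i)$ tangent to $N$, whose restrictions span $F_0$ along $N$ (the pull-back-of-anchored-vector-bundles fact recalled above), every term vanishes along $N$: $\langle y,\sigma_i\rangle|_N=0$, differentiation along the $N$-tangent field $\a(\sigma_j)$ preserves this, and $[\sigma_1,\sigma_2]$ again has anchor tangent to $N$ since $\a([\sigma_1,\sigma_2])=[\a(\sigma_1),\a(\sigma_2)]$, so $[\sigma_1,\sigma_2]|_N\in\Gamma(F_0)$. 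Thus $\d_1 y|_N$ annihilates $\wedge^2F_0$, which is exactly the condition for $\d_1 y\in\mathcal I$.

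I expect the last computation for $\d_1 y$ to be the main obstacle: it is where the bracket structure of the $L_\infty$-algebroid genuinely enters, and it relies on the already established (and not entirely formal) fact that $i^!E_0$ has enough sections obtained by restricting $N$-tangent anchored sections of $E_0$ — the same point handled for anchored vector bundles in Section~\ref{sec:splitla} and in \cite{bur:spl}; everything else is bookkeeping with the weight filtration. A cleaner, coordinate-free alternative is to argue super-geometrically: $F[1]\hra E[1]$ is a closed graded submanifold, and $F$ is an $L_\infty$-subalgebroid precisely when the homological vector field of $E$ is tangent to it. Tangency unwinds to $\a(F_0)\subseteq TN$ (true by construction of $F$), $\delta(F)\subseteq F$ (from $\a\circ\delta=0$), and closedness of the higher brackets $[\cdot,\ldots,\cdot]_n$ on $\Gamma(F)$ — for $n\ge 3$ a pointwise, $C^\infty(M)$-linear statement, and for $n=2$ exactly the Lie-algebroid argument.
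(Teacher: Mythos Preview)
Your proof is correct and takes essentially the same approach as the paper: both reduce to showing that the kernel of $\Gamma(\mathsf{C}(E)^*)\to\Gamma(\mathsf{C}(i^!E)^*)$ is $\d$-closed by checking it on generators, and both use the same Cartan-type identity $\iota(\sigma_1)\iota(\sigma_2)\d y = -\iota([\sigma_1,\sigma_2])y \pm \iota(\sigma_1)\d\iota(\sigma_2)y \pm \iota(\sigma_2)\d\iota(\sigma_1)y$ together with $\a([\sigma_1,\sigma_2])=[\a(\sigma_1),\a(\sigma_2)]$ for the main step. You are in fact slightly more careful than the paper: you observe that the annihilator of $i^!E$ is concentrated in $E_0^*$ and then explicitly split $\d y=\d_0 y+\d_1 y$, disposing of the weight-1 piece $\d_0 y$ via $\a\circ\delta=0$, whereas the paper's double-contraction argument strictly speaking only tests the weight-2 component and leaves the weight-1 check implicit.
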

\begin{proof}
We have to show that the kernel of the pull-back map $\Gamma(\mathsf{C}(E)^*)\to \Gamma(\mathsf{C}(i^!E)^*)$ is closed under the differential $\d$. This kernel is the ideal in $\Gamma(\mathsf{C}(E)^*)$ generated by functions $f\in C^\infty(M)$
such that $f|_N=0$, and sections $\alpha\in  \Gamma(E[1]^*)$ such that
$\alpha|_N$ takes values in the annihilator of $i^!E$.  Hence, it suffices that the differentials of these generators are again in the ideal.

If $f\in C^\infty(M)$ with $f|_N=0$, and  $\sigma\in \Gamma(E)$ with
$\a(\sigma)$ tangent to $N$, then $\iota(\sigma)(\d f)|_N=(\L_{\a(\sigma)}f)|_N=0$. This shows that $(\d f)|_N$ is a section of the
annihilator.

If $\alpha\in \Gamma(E[1]^*)$
restricts to a section of the annihilator,
and $\sigma,\tau\in \Gamma(E)$ are two homogeneous sections whose image under the anchor is tangent to $N$, then
\[
\iota(\sigma)\iota(\tau)\d\alpha=-
\iota([\sigma,\tau])\alpha
\pm \iota(\sigma)\d\iota(\tau)\alpha\pm\iota(\tau)\d\iota(\sigma)\alpha,
\]
where all three terms
vanish along $N$. (Here, we are using that $\a([\sigma,\tau])=[\a(\sigma),\a(\tau)]$ is again tangent to $N$.)
This shows that $\d\alpha$ is in the ideal.
\end{proof}

\begin{remark}
On the dual side, the degree $1$ differential $\delta$ on the complex $E$ restricts to the differential on  $i^!E$, and similarly for the anchor.
The 2-bracket on $\Gamma(E)$ induces the 2-bracket on
$\Gamma(i^!E)$, in such a way that $[\sigma|_N,\tau|_N]=[\sigma,\tau]|_N$
whenever $\sigma,\tau$ restrict to sections of $i^!E$. Likewise, all the higher brackets $[\cdot,\ldots,\cdot]_n$ of $E$, with $n\ge 3$ `restrict' to the brackets for $i^!E$.
\end{remark}

\subsubsection{Pullbacks}\label{subsec:pullLinfty}
We define the \emph{pullback} of an $L_\infty$-algebroid $E\to M$, under a smooth map $\varphi\colon N\to M$ that is transverse to the anchor,
as the fiber product
\[ \varphi^!E=E\times_{TM} TN.\]
In the case of an embedding, the $L_\infty$-structure is defined by Lemma \ref{lem:linftypull}; in general
it is defined by the identification of $\varphi^!E$ with  $i^!(E\times TN)$, where $i\colon N\cong \on{Gr}(\varphi)\to M\times N$ is the inclusion as the graph.
We have that $(\varphi^!E)_0=\varphi^!(E_0)$ (the pull-back as an anchored vector bundle), while $(\varphi^!E)_r$ for $r\neq 0$ is just the usual pull-back of $E_r$ as a vector bundle. The pull-back $L_\infty$-algebroid comes with a natural  $L_\infty$-morphism of weight zero,
\begin{equation}\label{eq:!Cmap}
\varphi_!\colon  \varphi^!E\da E,
\end{equation}
given by the inclusion $\varphi^!E\hra E\times TN$ as a sub-$L_\infty$-algebroid, followed by the projection to $E$.

If $N=M\times Q$, with $\varphi$ projection to the first factor, we have that $\varphi^!E\cong E \times TQ$; this
also gives a local description of pullbacks under submersions.

\subsubsection{Automorphisms, derivations}\label{subsec:auto}
For a $\Z$-graded vector bundle $V=\oplus_r V_r\to M$ with finite-rank graded pieces, we define the degree $0$ automorphisms $\wt{\phi}\in\on{Aut}_{\ca{VB}}(V)$ as a collection of
vector bundle automorphisms $\wt{\phi}_r\in \on{Aut}_{\ca{VB}}(V_r)$, all with the same base map  $\phi\colon M\to M$. Similarly, we define a degree $0$ infinitesimal vector bundle automorphism
$\wt{X}\in\mf{aut}_{\ca{VB}}(V)$ as a collection of vector fields $\wt{X}_r\in\mf{aut}_{\ca{VB}}(V_r)$ on the graded pieces, all with the same base vector field $X\in\mf{X}(M)$. As remarked in Section \ref{sec:euler}, if $X$ is complete then it is automatic that all the $\wt{X}_r$ are complete; in this case $\wt{X}$ determines a 1-parameter family of automorphisms $\phi^{\wt{X}}_s\in \on{Aut}_{\ca{VB}}(V)$.
Any infinitesimal automorphism $\wt{X}$ defines a sequence of linear maps $D\colon \Gamma(V_r)\to \Gamma(V_r)$, satisfying the Leibniz rule \eqref{eq:leibnitz} for the vector field $X$.
Alternatively, we can describe it as a degree zero linear map on sections of the graded dual $V^*=\oplus_r (V_r)^*$,
\[ D\colon   \Gamma(V^*)\to \Gamma(V^*),\]
again satisfying the Leibniz rule with respect to $X$. The two $D$'s are related by
\[
X\l\alpha,\tau\r=\l D\alpha,\tau\r +\l \alpha,D\tau\r
\]
for all $\alpha\in\Gamma(V^*)$ and $\tau\in\Gamma(V)$.

 Let $E\to M$ be an $L_\infty$-algebroid. An \emph{infinitesimal $L_\infty$-automorphism} of $E$ is given by an infinitesimal automorphism $\wt{X}$ of the graded vector bundle $\mathsf{C}(E)$,
with base vector field $X$, such that the corresponding (local) flow is by (local) $L_\infty$-automorphisms. Using the discussion above, we obtain an equivalent description in terms of operators on the complex $\Gamma(\mathsf{C}(E)^*)$:

\begin{lemma}
An infinitesimal $L_\infty$-automorphism of an $L_\infty$-algebroid $E$ is equivalent
to a degree 0 derivation $D\colon   \Gamma(\mathsf{C}(E)^*)\to \Gamma(\mathsf{C}(E)^*)$
which commutes with $\d$.

%
\end{lemma}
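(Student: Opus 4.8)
The plan is to unwind the definition of an infinitesimal $L_\infty$-automorphism and match it, in both directions, with the data of a degree $0$ derivation $D$ of $\Gamma(\mathsf{C}(E)^*)$ commuting with $\d$. First I would record the dictionary established in Section~\ref{subsec:auto}: a degree $0$ infinitesimal automorphism $\wt{X}$ of the graded vector bundle $\mathsf{C}(E)$, with base vector field $X$, is the same thing as a degree $0$ linear operator $D$ on $\Gamma(\mathsf{C}(E)^*)$ satisfying the Leibniz rule $D(f\alpha)=fD(\alpha)+(\L_X f)\alpha$. One subtlety to address up front is that $\mathsf{C}(E)$ is in general of infinite rank, but it is filtered by finite-rank subbundles (bounded weight, and in each weight bounded degree by \eqref{eq:zero}), and $\wt{X}$ must preserve this filtration; so the correspondence with $D$ still goes through degreewise, and $D$ is a genuine derivation of the algebra $\Gamma(\mathsf{C}(E)^*)$ because a vector bundle automorphism of $\mathsf{C}(E)$ respecting the coalgebra structure dualizes to an algebra automorphism of $\mathsf{C}(E)^*$, whence the infinitesimal version is an algebra derivation.

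The core of the argument is then the equivalence between ``the flow $\phi^{\wt{X}}_s$ is by $L_\infty$-automorphisms'' and ``$D$ commutes with $\d$''. Recall that an $L_\infty$-automorphism is, by definition, a graded vector bundle automorphism $\wt{\phi}$ of $\mathsf{C}(E)$ whose dual pullback $\wt{\phi}^*$ on $\Gamma(\mathsf{C}(E)^*)$ is a morphism of differential graded algebras, i.e. an algebra automorphism with $\wt{\phi}^*\circ \d=\d\circ \wt{\phi}^*$. So the flow being by $L_\infty$-automorphisms means $(\phi^{\wt{X}}_s)^*\circ \d=\d\circ (\phi^{\wt{X}}_s)^*$ for all $s$. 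Differentiating this identity at $s=0$, using that $\frac{d}{ds}\big|_{s=0}(\phi^{\wt{X}}_s)^*=D$ (this is the sign convention fixed in Section~\ref{subsec:convent}, transported to sections), gives $D\circ \d=\d\circ D$. Conversely, if $D\circ\d=\d\circ D$, then since $\d$ has constant coefficients relative to the flow — more precisely, $\frac{d}{ds}\big((\phi^{\wt{X}}_s)^*\circ \d - \d\circ(\phi^{\wt{X}}_s)^*\big)=(\phi^{\wt{X}}_s)^*\circ(D\circ\d-\d\circ D)=0$ — and the bracketed expression vanishes at $s=0$, it vanishes for all $s$; hence each $(\phi^{\wt{X}}_s)^*$ is a dg-algebra morphism, i.e. $\phi^{\wt{X}}_s$ is an $L_\infty$-automorphism. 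Here I am using completeness of $X$ (equivalently of all the $\wt{X}_r$, by the homogeneity remark in Section~\ref{subsec:convent}) only to have the flow defined for all $s$; for the local statement one runs the same ODE argument on the domain of definition.

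I would close by remarking that the identity $\frac{d}{ds}(\phi^{\wt{X}}_s)^*\circ\d=(\phi^{\wt{X}}_s)^*\circ D\circ\d$ uses that $\d$ is a \emph{fixed} operator independent of $s$, and that $D$ commutes with the substitution operators $(\phi^{\wt{X}}_s)^*$ in the sense $\frac{d}{ds}(\phi^{\wt{X}}_s)^*=(\phi^{\wt{X}}_s)^*\circ D=D\circ(\phi^{\wt{X}}_s)^*$, the last equality because $(\phi^{\wt{X}}_s)^*$ and $D$ are both built from the one-parameter group and hence commute. The main obstacle, such as it is, is bookkeeping: making sure the differentiation-under-the-flow is legitimate on the infinite-rank bundle $\mathsf{C}(E)^*$, which is handled by the weight-and-degree filtration by finite-rank pieces noted above, so that every computation takes place in a finite-rank subbundle preserved by both $\d$ (it raises weight, so on a bounded-degree piece only finitely many $\d_n$ act, cf. the discussion after \eqref{eq:zero}) and the flow. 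No genuinely hard estimate or construction is required; the content is entirely the translation between the automorphism picture and the derivation picture, together with the standard ``differentiate the conjugation relation'' lemma for one-parameter groups.
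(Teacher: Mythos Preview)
Your proposal is correct and is exactly the argument the paper has in mind: the paper does not actually supply a proof of this lemma, but states it as an immediate consequence of the preceding discussion (``Using the discussion above, we obtain an equivalent description\ldots''), namely the dictionary between degree~$0$ infinitesimal automorphisms of a graded vector bundle and first-order operators $D$ on sections satisfying the Leibniz rule. Your write-up simply fills in those details via the standard differentiate-the-flow / integrate-the-derivation argument.

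One cosmetic slip: the displayed identity
\[
\tfrac{d}{ds}\big((\phi^{\wt{X}}_s)^*\circ \d - \d\circ(\phi^{\wt{X}}_s)^*\big)=(\phi^{\wt{X}}_s)^*\circ(D\circ\d-\d\circ D)
\]
is not literally correct as written, since passing $\d$ through $(\phi^{\wt{X}}_s)^*$ is precisely what you are proving. The clean version is to set $C_s=(\phi^{\wt{X}}_s)^*\circ\d-\d\circ(\phi^{\wt{X}}_s)^*$; using $\tfrac{d}{ds}(\phi^{\wt{X}}_s)^*=(\phi^{\wt{X}}_s)^*\circ D$ and $[D,\d]=0$ one finds $C_s'=C_s\circ D$, whence $C_0=0$ forces $C_s=0$. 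You clearly have this in mind in your final paragraph, so the correction is purely notational.
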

 %

Note that the restriction of the derivation $D$ to the weight zero summand $\Gamma(\mathsf{C}^0(E)^*)=C^\infty(M)$ is the base  vector field $X$
of the resulting infinitesimal automorphism $\wt{X}$ of $\mathsf{C}(E)$. If $X$ is complete then $\wt{X}$  integrates to a flow $\varphi_{s}^{\wt{X}}\colon\mathsf{C}(E)\to \mathsf{C}(E)$, defining a 1-parameter family of
$L_\infty$-algebroid automorphisms, denoted by the same letter
$\varphi_{s}^{\wt{X}}\colon E \da E$.
We will mainly be interested in the following situation. Let $\sigma\in \Gamma(E_0)= \Gamma(E[1]_{-1})$. Then the interior multiplication operator $\iota(\sigma)$ is a graded derivation of degree $-1$ of $\Gamma(\mathsf{C}(E)^*)$, and its graded commutator with $\d$ is a degree $0$ derivation
\[ \L(\sigma)=[\d,\iota(\sigma)].\]
This satisfies
\[ [\d,\,\L(\sigma)]=[\d,[\d,\iota(\sigma)]]=\hh [[\d,\d],\iota(\sigma)]=0,\]
since $[\d,\d]=2\d\circ \d=0$. On functions,
\[ \L(\sigma)f=\iota(\sigma)\d f=\L_{\a(\sigma)}f.\]
It follows that  $\L(\sigma)$ defines an infinitesimal $L_\infty$-automorphism $\wt{X}$ of
$E$, with base vector field $X=\a(\sigma)$. If $X$ is complete, then this derivation integrates to a 1-parameter family of $L_\infty$-automorphisms.

\subsection{The splitting theorem}
Let $E\to M$ be an $L_\infty$-algebroid, and  $N\subset M$ a submanifold transverse to the anchor of $E$. Let $i\colon N\to M$ be the inclusion, and
$p\colon \nu(M,N)\to N$ the normal bundle; we take the $L_\infty$-algebroid
\[ \nu(E)=p^! i^! E\]
to be the \emph{linear approximation} of $E$ along $N$.  The following result says that
on a tubular neighborhood of $N$, the $L_\infty$-algebroid $E$ is $L_\infty$-isomorphic to its linear approximation.

\begin{theorem}[Splitting theorem for  $L_\infty$-algebroids]\label{th:split-linfinity}
Let $E\to M$ be an $L_\infty$-algebroid, and $N\subset M$ a submanifold
transverse to the anchor of $E$. Then there exists an Euler-like section $\sigma\in \Gamma(E_0)$ with respect to $N$, and any choice of such a section determines an $L_\infty$-isomorphism
\[ \nu(E)\da E|_U,\]
with base map the tubular neighborhood embedding $\psi\colon \nu(M,N)\to U\subset M$ defined by $\a(\sigma)$.
\end{theorem}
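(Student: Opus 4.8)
The plan is to follow the template established in Sections \ref{sec:euler} and \ref{sec:splitla}, promoting the Euler-like \emph{section} to an infinitesimal $L_\infty$-automorphism of the deformation-space $L_\infty$-algebroid and taking parallel transport. First I would observe that, by Lemma \ref{EL-ex} (whose proof only concerns the anchored vector bundle $E_0$), an Euler-like section $\sigma\in\Gamma(E_0)$ exists; fix one, with $X=\a(\sigma)$ the associated Euler-like vector field on $M$. Since $\kappa\colon\D(M,N)\to M$ is transverse to the anchor of $E$ (transversality to $i$ plus the submersion $\pi$), we get a pullback $L_\infty$-algebroid $\D(E)=\kappa^!E\to\D(M,N)$, with $\D(E)|_{M\times\R^\times}\cong E\times T\R^\times$ and $j^!\D(E)=\nu(E)$, by the restriction-to-submanifolds and pullback constructions of Section \ref{sec:splitinfinity}. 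As in Section \ref{sec:splitla}, the vector field $\Theta$ gives a section $\theta=0\times_\kappa\Theta\in\Gamma(\D(E)_0)$ restricting to $-\epsilon$ on the zero fibre, and $\D(\sigma)=\sigma\times_\kappa\D(X)\in\Gamma(\D(E)_0)$ restricts to $\nu(\sigma)=\epsilon$; hence $\D(\sigma)+\theta$ vanishes on $\pi^{-1}(0)$ and is divisible by $t$, yielding $w=\tfrac1t(\D(\sigma)+\theta)\in\Gamma(\D(E)_0)$ with $\a(w)=W$ and $[w,\D(\sigma)]=0$, exactly as in the anchored-vector-bundle case.

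Next I would apply the operator $\L(w)=[\d,\iota(w)]$ from Section \ref{subsec:auto}: since $w$ is a section of degree-$0$ part $\D(E)_0=\D(E)[1]_{-1}$, the derivation $\L(w)$ of $\Gamma(\mathsf{C}(\D(E))^*)$ is a degree-$0$ derivation commuting with $\d$, i.e.\ an infinitesimal $L_\infty$-automorphism $\wt W$ of $\D(E)$ with base vector field $W$. Over $\D(U,N)$ the vector field $W$ is complete, so by the homogeneity remark of Section \ref{subsec:convent} all the graded pieces of $\wt W$ are complete, and $\wt W$ integrates to a $1$-parameter family of $L_\infty$-automorphisms $\varphi_s^{\wt W}\colon\D(E|_U)\da\D(E|_U)$ covering the flow $\varphi_s$ of $W$. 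Restricting the time-$(-1)$ flow to the fibres and using $\varphi_{-1}\circ j=j_1\circ\psi$ (Theorem \ref{th:euler}), together with the identifications $j_0^!\D(E|_U)\cong\nu(E)$ and $j_1^!\D(E|_U)\cong E|_U$ and the weight-zero morphisms $j_!,\kappa_!$ of \eqref{eq:!Cmap}, I would define $\wt\psi$ as the composition
\[ \nu(E)\xrightarrow{\ j_!\ }\D(E|_U)\xrightarrow{\ \varphi^{\wt W}_{-1}\ }\D(E|_U)\xrightarrow{\ \kappa_!\ }E|_U, \]
in parallel with \eqref{eq:LAsplit12}; since each arrow is an $L_\infty$-morphism and the first and last are isomorphisms onto/from the relevant restrictions, $\wt\psi$ is an $L_\infty$-isomorphism $\nu(E)\da E|_U$ covering $\psi$.

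The one genuinely new point compared to Section \ref{sec:splitla}—and the step I expect to be the main obstacle—is verifying that $\L(w)$ really is well-defined and $\d$-commuting \emph{as a derivation on the deformation space}, and that its flow restricts correctly to the zero and generic fibres; the subtlety is that $w$ is only a section over $\D(U,N)$ (not all of $\D(M,N)$) and that $\mathsf{C}(\D(E))^*$ is an infinite product over weights, so one must check degreewise/weightwise that the flow is defined and that the identity $[\d,\L(w)]=0$ (which is formal, from $\d\circ\d=0$) indeed produces an honest automorphism of each finite-rank graded piece of $\mathsf{C}(\D(E))$. Two facts make this routine rather than hard: on $M\times\R^\times$ everything is the obvious product structure, so all identities hold there and extend by continuity/density to $\D(M,N)$ (the argument used repeatedly in Section \ref{sec:euler}); and completeness of $W$ over $\D(U,N)$ forces completeness of each $\wt W_r$. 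I would also record, exactly as in Section \ref{sec:splitla}, that if $\tau\in\Gamma(E)$ has $\a(\tau)$ tangent to $N$ and $[\sigma,\tau]=0$ (and more generally is annihilated by all higher brackets with $\sigma$), then $[w,\D(\tau)]=0$, so $\wt\psi^*\tau=\nu(\tau)$; in particular $\wt\psi^*\sigma=\epsilon$. Finally, the $G$-equivariant refinement follows verbatim, choosing $\sigma$ invariant via Lemma \ref{EL-ex} and noting all constructions are functorial.
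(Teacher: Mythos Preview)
Your proposal is correct and follows essentially the same route as the paper: construct $\D(E)=\kappa^!E$, form $w=\tfrac1t(\D(\sigma)+\theta)$, use $\L(w)=[\d,\iota(w)]$ as the infinitesimal $L_\infty$-automorphism with base vector field $W$, integrate over $\D(U,N)$, and read off $\wt\psi$ from the time-$(-1)$ flow between the fibres $j_0^!\D(E)\cong\nu(E)$ and $j_1^!\D(E)\cong E|_U$. One small inaccuracy: $w$ is in fact a global section of $\D(E)$ over all of $\D(M,N)$ (the divisibility by $t$ holds everywhere); it is only the \emph{flow} of $W$ that need not be complete outside $\D(U,N)$.
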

\begin{proof}
The existence of Euler-like sections follows from Lemma~\ref{EL-ex}. Since $\a\colon E\to TM$ is tranverse to $N$, it is transverse to the map $\kappa\colon \D(M,N)\to M$. We hence obtain an $L_\infty$-algebroid
\[ \D(E)=\kappa^!E\to \D(M,N)\]
over the deformation space. By our discussion for anchored vector bundles in Section \ref{sec:splitla}, we have the section
\[   w=\f{1}{t}(\D(\sigma)+\theta)\in \Gamma(E_0)\subset \Gamma(E),\]
with $\a(w)=W$. As discussed in Section~\ref{subsec:auto}, this section defines
a degree $0$ derivation
\[ \L(w)\colon \mathsf{C}(\D(E)^*)\to \mathsf{C}(\D(E)^*),\]
commuting with $\d$, and a corresponding infinitesimal $L_\infty$-automorphism of $\D(E)$. The vector field $W$ is complete over $\D(U,N)\subset \D(M,N)$,
and the lifted flow
\[
\wt{\varphi}_s\colon \mathsf{C}(\D(E)|_{\D(U,N)})\to \mathsf{C}(\D(E)|_{\D(U,N)}).
\]
of the vector field $\wt{W}$ on the coalgebra bundle defines a 1-parameter family of $L_\infty$-automorphisms
\[\wt{\varphi}_s\colon  \D(E)|_{\D(U,N)}\da \D(E)|_{\D(U,N)},\]
with base map the flow $\varphi_s$ of $W$.
The map $\varphi_{-1}$ takes $\pi^{-1}(0)$ to $\pi^{-1}(1)$, hence
it induces an $L_\infty$-isomorphism
\[ \wt{\psi}\colon j_0^!\D(E)|_{\D(U,N)}\da  j_1^!\D(E)|_{\D(U,N)}\]
such that $ \wt{\varphi}_{-1}\circ (j_0)_!=(j_1)_!\circ \wt{\psi}$
(cf.~ \eqref{eq:!Cmap}). Using
the identifications $j_0^!\D(E)|_{\D(U,N)}\cong \nu(E)$ and $j_1^!\D(E)|_{\D(U,N)}\cong E|_U$
this is the desired $L_\infty$-isomorphism $\wt{\psi}\colon \nu(E)\da E|_U$.
\end{proof}

Since $\kappa\circ j_1=\on{id}$, and writing $j=j_0$, we may alternatively describe $\wt{\psi}$
as a composition of $L_\infty$-morphisms
\begin{equation}\label{eq:LAsplit1} \xymatrix@C=8ex{ \nu(E)\ar@{-->}[r]^{j_!}\ar[d] & \D(E)|_{\D(U,N)} \ar@{-->}[r]^{\wt{\varphi}_{-1}}\ar[d] & \D(E)|_{\D(U,N)}\ar@{-->}[r]^{\kappa_!}\ar[d]
& E|_U \ar[d]\\
\nu(M,N) \ar[r]_j  & \D(U,N) \ar[r]_{\varphi_{-1}} & \D(U,N) \ar[r]_\kappa & U.
}
\end{equation}
%
%


\begin{remark}
By the result for anchored vector bundles (cf.~ Section \ref{sec:splitla}),
the choice of the Euler-like section $\sigma$ determines a degree $0$  isomorphism of graded vector bundles $\nu(E)\to  E|_U$,
with base map the tubular neighborhood embedding $\psi\colon\nu(M,N)\to U\subset M$ determined by $\a(\sigma)$. (While $E$ has infinite rank, it suffices to apply the result from Section \ref{sec:splitla} to its finite-rank approximations $\oplus_{r=-k}^0 E_r$.) The construction of this map depends only on the 2-bracket $[\cdot,\cdot]$. In general, this map $\nu(E)\to E|_U$ determines only the weight $0$ component of the morphism
$\wt{\psi}\colon \nu(E)\da E|_U$
 of the theorem. The construction of the full morphism involves all the higher $n$-brackets as well.
\end{remark}

\begin{corollary}[Local splitting theorem for $L_\infty$-algebroids]
Let $E\to M$  be an $L_\infty$-algebroid, $S\subset M$ a leaf of the corresponding singular foliation, and $m\in S$. Choose a submanifold $i\colon N\hra M$ containing $m$, such that $T_mM=T_mN\oplus T_mS$. Replacing $N$ with a smaller submanifold if needed, the $L_\infty$-algebroid $E$ is locally $L_\infty$-isomorphic, near $m$, to the product $i^!E\times TS$.
\end{corollary}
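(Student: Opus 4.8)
The plan is to deduce the Local splitting theorem as an immediate application of Theorem \ref{th:split-linfinity} together with the observation that the linear approximation $\nu(E)$ around a suitably chosen small transversal is itself a product. First I would choose the submanifold $N\ni m$ with $T_mM=T_mN\oplus T_mS$; shrinking $N$, the transversality $T_mM=T_mN\oplus\a(E_m)=T_mN\oplus T_mS$ persists on a neighborhood of $m$ in $N$, so $N$ is transverse to the anchor of $E$ and Theorem \ref{th:split-linfinity} applies. It produces an Euler-like section $\sigma\in\Gamma(E_0)$ and an $L_\infty$-isomorphism $\nu(E)\da E|_U$ over a tubular neighborhood embedding $\psi\colon\nu(M,N)\to U\subset M$, with $m\in U$ (since $m\in N\subset U$). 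Replacing $N$ by a smaller transversal if necessary, we may assume $U$ is an arbitrarily small neighborhood of $m$, which is the sense in which the statement is ``local near $m$''.

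Next I would identify $\nu(E)=p^!i^!E$ with the product $i^!E\times TS$. The point is that $N$ is a small transversal at a point $m$ of the leaf $S$, so the normal bundle $\nu(M,N)$ is (after restricting to a neighborhood of $m$, hence after the shrinking already invoked) trivial: a choice of local coordinates adapted to the splitting $T_mM=T_mN\oplus T_mS$ gives an identification $\nu(M,N)\cong N\times\mathsf{P}$, where $\mathsf{P}$ is an open ball in $T_mS\cong\R^{\dim S}$, in such a way that $p\colon\nu(M,N)\to N$ becomes the projection $N\times\mathsf{P}\to N$. Under a trivialization of this kind the pullback $p^!$ of an $L_\infty$-algebroid along a projection was already computed in Section~\ref{subsec:pullLinfty}: for the projection $\varphi\colon N\times Q\to N$ one has $\varphi^!F\cong F\times TQ$ as $L_\infty$-algebroids, with the weight-zero morphisms matching up. Applying this with $F=i^!E$ and $Q=\mathsf{P}$, and noting $T\mathsf{P}\cong TS$ over the relevant neighborhood (indeed $\mathsf{P}$ is an open subset of a vector space modeled on $T_mS$, and its tangent bundle is the trivial bundle which we identify with $TS$ restricted to a neighborhood of $m$, or more invariantly with the leaf's tangent Lie algebroid near $m$), we get $\nu(E)=p^!i^!E\cong i^!E\times T\mathsf{P}\cong i^!E\times TS$ as $L_\infty$-algebroids.

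Composing the two identifications yields the desired local $L_\infty$-isomorphism $E|_U\cong\nu(E)\cong i^!E\times TS$ near $m$, which is exactly the claim once we record that the base diffeomorphism is $\psi^{-1}$ followed by the trivialization $\nu(M,N)\cong N\times\mathsf{P}$, carrying a neighborhood of $m$ in $M$ to a neighborhood of $(m,0)$ in $N\times S$. I expect the only genuinely delicate point to be bookkeeping: one must make sure the shrinkings of $N$ (first to guarantee transversality on all of $N$, then to make $U$ small, then to trivialize $\nu(M,N)$) are compatible and that the $L_\infty$-structure, not merely the underlying graded bundle, is transported correctly through the trivialization --- but all of this is already packaged in the pullback-under-submersion computation of Section~\ref{subsec:pullLinfty} and in Theorem~\ref{th:split-linfinity}, so no new analytic input is needed. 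An alternative, essentially equivalent, route is to invoke the ``splitting theorem'' remark following \eqref{eq:LAsplit1}: a local trivialization of $\nu(M,N)$ directly identifies $\nu(E)$ with $i^!E\times T\mathsf{P}$, and one then just quotes Theorem~\ref{th:split-linfinity}.
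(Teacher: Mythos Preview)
Your proposal is correct and follows essentially the same approach as the paper: shrink $N$ to ensure transversality and triviality of the normal bundle, identify $\nu(E)=p^!i^!E$ with $i^!E\times T\mathsf{P}$ via the pullback-under-projection computation of Section~\ref{subsec:pullLinfty}, and apply Theorem~\ref{th:split-linfinity}. The paper's proof is just a terser version of yours, taking $\mathsf{P}=T_mS$ directly rather than an open ball therein.
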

\begin{proof}
Replacing $N$ with a smaller submanifold if needed, we ensure that $N$ is transverse to the foliation, and that $\nu(M,N)$ is trivial:  $\nu(M,N)=N\times \mathsf{P}$, where $\mathsf{P}=T_mS$. But then
$p^!i^!E=i^!E\times T\mathsf{P}$ (see Section~\ref{subsec:pullLinfty}), so the claim follows from the theorem.
\end{proof}
As another direct application, we obtain a local description of transitive $L_\infty$-algebroids.

\begin{corollary}[Local trivializations of transitive $L_\infty$-algebroids]
Let $E\to M$ be a transitive $L_\infty$-algebroid, i.e., such that the anchor map is surjective. Given $m\in M$ and a tubular neighborhood embedding $\psi\colon T_mM\to M$, with image $U$, there is an $L_\infty$-isomorphism
\[ E|_U\da TU\times \g\]
where $\g$ is an $L_\infty$-algebra.
\end{corollary}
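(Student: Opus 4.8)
The plan is to derive this corollary directly from the Splitting Theorem for $L_\infty$-algebroids (Theorem~\ref{th:split-linfinity}) applied to a suitable transversal. First I would observe that since $E$ is transitive, the anchor is surjective, so the associated singular foliation has $M$ itself as its only leaf; in particular the point $\{m\}$ can serve as a ``transversal'' to the foliation, or more precisely we take $N=\{m\}$, which is trivially transverse to the anchor. The normal bundle $\nu(M,N)$ is then just $T_mM$ (a vector space over a point), and the inclusion $i\colon \{m\}\hra M$ gives a pullback $L_\infty$-algebroid $i^!E=\a^{-1}(T_m\{m\})=\a^{-1}(0)=\ker(\a_m)$ over the point. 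By definition, an $L_\infty$-algebroid over a point is an $L_\infty$-algebra; call it $\g$. Its underlying complex is $\ker(\a)|_m$ with the induced differential $\delta$, and its brackets are the restrictions of the $n$-brackets of $E$ to the fiber, as described in the remark following Lemma~\ref{lem:linftypull}.

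Next I would apply Theorem~\ref{th:split-linfinity} with this choice of $N=\{m\}$: it produces an Euler-like section $\sigma\in\Gamma(E_0)$ and an $L_\infty$-isomorphism $\nu(E)\da E|_{U'}$, where $U'$ is the image of the tubular neighborhood embedding $\psi'\colon\nu(M,\{m\})=T_mM\to M$ determined by $\a(\sigma)$. Here $\nu(E)=p^!i^!E$ where $p\colon T_mM\to\{m\}$ is the projection; by the local description of pullbacks under submersions (Section~\ref{subsec:pullLinfty}, applied with $N=\{\mathrm{pt}\}\times Q$ for $Q=T_mM$), we have $\nu(E)=p^!\g=\g\times T(T_mM)$. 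Identifying $T(T_mM)$ with $TU'$ via $\psi'$, and noting that as an $L_\infty$-algebroid over the point $T(T_mM)$ carries the trivial (abelian, zero-anchor-to-a-point) structure so that $\g\times T(T_mM)$ matches $TU'\times\g$ after transporting along $\psi'$, we obtain an $L_\infty$-isomorphism $E|_{U'}\da TU'\times\g$.

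The one genuine subtlety is that the corollary fixes in advance an arbitrary tubular neighborhood embedding $\psi$ with image $U$, whereas Theorem~\ref{th:split-linfinity} produces its own embedding $\psi'$ (with image $U'$) pinned down by the Euler-like section. To bridge this, I would use the fact that any two tubular neighborhood embeddings of $\{m\}$ in $M$ differ by a diffeomorphism of $T_mM$ fixing the origin and with identity linear part at $0$ --- equivalently, by Theorem~\ref{th:euler}, the embedding $\psi'$ corresponds to the Euler-like vector field $\a(\sigma)$, while $\psi$ corresponds to some other Euler-like vector field $X_0$; one can always arrange (by Lemma~\ref{EL-ex} together with the freedom in choosing $\sigma$, or by composing with the diffeomorphism $\psi^{-1}\circ\psi'$ of $T_mM$, which lifts to an $L_\infty$-automorphism of the product $TU'\times\g$ acting trivially on the $\g$-factor) that the resulting isomorphism has base map $\psi$ with image $U$. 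Alternatively, and perhaps more cleanly, one simply notes that the statement ``there is an $L_\infty$-isomorphism $E|_U\da TU\times\g$'' is insensitive to the choice of tubular neighborhood among those of the germ class, since the diffeomorphism relating two such embeddings is covered by an automorphism of $TU\times\g$ of the desired form (the derivative of a point-fixing diffeomorphism of a vector space is an automorphism of its tangent bundle, and tensoring with $\mathrm{id}_\g$ gives an $L_\infty$-automorphism of the product).

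I expect the main obstacle to be purely bookkeeping: making the identification $\nu(E)\cong TU\times\g$ precise as $L_\infty$-algebroids, i.e.\ checking that the $L_\infty$-structure on $p^!\g$ obtained by pulling back the $L_\infty$-algebra $\g$ along $p\colon T_mM\to\mathrm{pt}$ really does agree, under $\psi$, with the product $L_\infty$-algebroid structure on $TU\times\g$ (trivial tangent-bundle factor, constant $\g$-factor, no cross brackets). This follows from the explicit form of the pullback and product constructions in Section~\ref{subsec:pullLinfty}, together with the observation that $T_mM$ being a single-fiber vector bundle over a point forces the base vector field of everything in $p^!\g$ to lie along the $T_mM$-directions, but it does require unwinding the definitions carefully. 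Everything else --- existence of $\sigma$, the isomorphism itself, transitivity implying $M$ is a single leaf --- is immediate from the results already established.
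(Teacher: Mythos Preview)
Your overall strategy---take $N=\{m\}$, identify $i^!E$ with an $L_\infty$-algebra $\g$, recognize $\nu(E)=p^!\g$ as $T(T_mM)\times\g$, and invoke Theorem~\ref{th:split-linfinity}---is exactly what the paper does. The one place you diverge is in handling the \emph{given} tubular neighborhood embedding $\psi$, and here the paper's argument is cleaner and your reconciliation step is not quite rigorous as written.

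The paper exploits transitivity directly: since $\a\colon E_0\to TM$ is surjective, choose any splitting $s\colon TM\to E_0$. Let $X=\psi_*\E\in\mf{X}(U)$ be the Euler-like vector field corresponding to the given $\psi$ (via Theorem~\ref{th:euler}), and set $\sigma=s(X)\in\Gamma(E_0|_U)$. Then $\sigma(m)=s(0)=0$ and $\a(\sigma)=X$, so $\sigma$ is Euler-like, and the tubular neighborhood embedding produced by Theorem~\ref{th:split-linfinity} is \emph{exactly} $\psi$, with image exactly $U$. No reconciliation is needed.

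Your approach instead lets the theorem hand you some $\psi'$ with image $U'$ and then tries to match it to $\psi$. Your option of composing with $\psi^{-1}\circ\psi'$ runs into the problem that this diffeomorphism is only defined on $\psi'^{-1}(U\cap U')$, not on all of $T_mM$, so you do not obtain an isomorphism over the prescribed $U$. Your other option---``freedom in choosing $\sigma$''---is in fact the right idea, but you do not say how to exercise that freedom; the missing observation is precisely that surjectivity of $\a$ lets you lift the specific Euler-like vector field $\psi_*\E$. Once you make that explicit, your argument collapses to the paper's.
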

\begin{proof}
We apply the theorem to $i\colon N=\{m\}\hra M$. Thus $\g=i^!E$ is an $L_\infty$-algebra, and
$\nu(E)=p^!i^!E=T_mM\times \g\cong TU\times \g$.
Let $X\in\mf{X}(U)$ be the image of the Euler vector field $\E$ under $\psi$, and
$\sigma\in \Gamma(E|_U)$ its image under any choice of splitting of
$\a\colon E_0\to TM$. Then Theorem \ref{th:split-linfinity} specializes to an
$L_\infty$-isomorphism $\wt{\psi}\colon \nu(E)\da E|_U$.
\end{proof}


\section{The splitting theorem for Courant algebroids}\label{sec:splitcourant}

We now discuss a splitting theorem for Courant algebroids; its proof follows the same strategy as for the anchored vector bundles and $L_\infty$-algebroids.
Basic references on Courant algebroids include \cite{lib:cou,liu:ma,roy:co,sev:let,sev:poi,sev:poi1}.

\subsection{Basic definitions and constructions}\label{subsec:basic}
Throughout, we will take `\emph{metric}' to be synonymous with `non-degenerate symmetric bilinear form'.  A \emph{Courant algebroid}
is a vector bundle $\AA$ over a manifold $M$, equipped with a metric $\l\cdot,\cdot\r$ on its fibers, a bundle map $\a\colon \AA\to TM$ over the identity map on $M$
(called the {\em anchor} map), and an $\mathbb{R}$-bilinear bracket
$\Cour{\cdot,\cdot}$ on its space of sections, such that the following
properties hold, for all sections $\sigma,\tau_1,\tau_2\in\Gamma(\AA)$:
\begin{enumerate}
\item $\a(\sigma)\l\tau_1,\tau_2\r=\l\Cour{\sigma,\tau_1},\tau_2\r+\l\tau_1,\Cour{\sigma,\tau_2}\r$,
\item $\Cour{\sigma,\Cour{\tau_1,\tau_2}}=\Cour{\Cour{\sigma,\tau_1},\tau_2}+\Cour{\tau_1,\Cour{\sigma,\tau_2}}$,
\item $\Cour{\tau_1,\tau_2}+\Cour{\tau_2,\tau_1}=\a^* \d\ \l\tau_1,\tau_2\r$.
\end{enumerate}
In the last line, the metric is used to identify $\AA^*\cong \AA$, hence $\a^*\colon T^*M\to \AA$.  One refers to $\Cour{\cdot,\cdot}$ as the \emph{Courant bracket} (or sometimes the \emph{Courant-Dorfman bracket}). 
From the axioms, one can derive additional properties, such as the derivation property
\begin{equation}\label{eq:dercou}
\Cour{\sigma,\ f\tau}=f\Cour{\sigma,\tau}+(\a(\sigma)f)\tau,
\end{equation}
for all $f\in C^\infty(M)$ and $\sigma,\tau\in\Gamma(\AA)$, and the
property $\a\circ \a^*=0$. (See \cite{uch:rem}.)
The
anchor map satisfies
$\a(\Cour{\sigma,\tau})=[\a(\sigma),\a(\tau)]$,
%
which implies that $(\AA,\a)$ is an \emph{involutive} anchored vector bundle (in the sense of Section~\ref{sec:splitla}). As a result,  one obtains:
\begin{proposition}
For any Courant algebroid $\AA$ over $M$, the image of the anchor map
defines a singular foliation $\J=\a(\Gamma(\AA))$.
\end{proposition}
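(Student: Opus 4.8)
The plan is to observe that this proposition is almost immediate from the material already developed, so the "proof" is really a matter of assembling the right citations. First I would recall the two structural facts about a Courant algebroid $\AA$ that are stated just above: the anchor satisfies $\a(\Cour{\sigma,\tau})=[\a(\sigma),\a(\tau)]$ for all $\sigma,\tau\in\Gamma(\AA)$, and the bracket satisfies the Leibniz/derivation rule \eqref{eq:dercou}. Together these say precisely that $(\AA,\a)$ is an \emph{involutive anchored vector bundle} in the sense of Section~\ref{sec:splitla}: the image $\J=\a(\Gamma(\AA))\subset\mf{X}(M)$ is closed under the Lie bracket of vector fields, and $\Gamma(\AA)$ carries a Leibniz bracket over $\a$.

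Next I would invoke what was established in Section~\ref{sec:splitla}: an involutive anchored vector bundle determines a singular foliation on $M$, in the sense of the definition in Section~\ref{subsec:foliation}. Concretely, one must check that $\J=\a(\Gamma(\AA))$ is a $C^\infty(M)$-submodule of $\mf{X}(M)$ that is local, locally finitely generated, and involutive. The submodule property and involutivity are exactly the two displayed facts above (involutivity uses $\a\circ\Cour{\cdot,\cdot}=[\cdot,\cdot]\circ(\a\times\a)$, and the module property uses that $\a$ is $C^\infty(M)$-linear on sections together with \eqref{eq:dercou}). Being locally finitely generated is automatic from local finite-rankness of $\AA$ together with local triviality: a local frame $\sigma_1,\dots,\sigma_k$ of $\AA$ over an open set $V$ gives generators $\a(\sigma_1),\dots,\a(\sigma_k)$ of $\J|_V$. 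Locality is a standard consequence of the sheaf-theoretic nature of $\Gamma(\AA)$ and partitions of unity. Hence $\J$ is a singular foliation.

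I expect no genuine obstacle here — the content is entirely bookkeeping, since every ingredient has been made available. If one wanted to be careful about a single point, it would be verifying the "local" and "locally finitely generated" axioms rather than just the involutivity; but these follow from the fact that $\AA$ is a (locally finite-rank) vector bundle and $\a$ is a vector bundle morphism, so the image module $\a(\Gamma(\AA))$ inherits these properties by the same elementary arguments used implicitly in Section~\ref{sec:splitla}. I would therefore write the proof as: \emph{By the axioms, $\a(\Cour{\sigma,\tau})=[\a(\sigma),\a(\tau)]$ and $\Cour{\cdot,\cdot}$ satisfies the Leibniz rule \eqref{eq:dercou}, so $(\AA,\a)$ is an involutive anchored vector bundle; by Section~\ref{sec:splitla} it determines the singular foliation $\J=\a(\Gamma(\AA))$.}
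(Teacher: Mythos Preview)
Your proposal is correct and matches the paper's approach exactly: the paper's ``proof'' is the sentence immediately preceding the proposition, namely that $\a(\Cour{\sigma,\tau})=[\a(\sigma),\a(\tau)]$ makes $(\AA,\a)$ an involutive anchored vector bundle, and then Section~\ref{sec:splitla} supplies the singular foliation. Your compressed one-line version at the end is essentially verbatim what the paper does.
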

%

A Courant algebroid is called \emph{transitive} if the anchor map is surjective. Clearly, the restriction $\AA|_\O$ of a Courant algebroid $\AA$ to any leaf $\O$ of its singular foliation is a transitive Courant algebroid, and $\AA$ is the union over these restrictions.

A first example is the \emph{standard Courant algebroid} $\T M=TM\oplus T^*M$, with anchor given by the projection to the first summand,
the bilinear form $\l\cdot,\cdot\r$ extending the pairing between $TM$ and $T^*M$ (thus, $TM$ and $T^*M$ are Lagrangian), and with the Courant bracket on sections given by
\begin{equation}\label{eq:courantbracket}
 \Cour{X_1+\alpha_1,X_2+\alpha_2}=[X_1,X_2]+\ca{L}_{X_1}\alpha_2-\iota_{X_2}\d\alpha_1,
 \end{equation}
for vector fields $X_1,X_2$ and 1-forms $\alpha_1,\alpha_2$.
More generally, given a closed 3-form $\eta \in \Omega^3(M)$, one has the $\eta$-twisted Courant algebroid $\T M^{\eta}$, with the same pairing and anchor as before,
 and Courant bracket obtained by adding a term $\iota_{X_1}\iota_{X_2}\eta$ to the right hand side of \eqref{eq:courantbracket}.

A Courant algebroid over a point is the same as a \emph{metrized} Lie algebra $\dd$ (a Lie algebra with an $\ad$-invariant metric).  More generally, given a Lie algebra action of $\dd$ on a manifold $M$, with coisotropic stabilizers,  the trivial bundle
$\AA=M\times \dd$ has the structure of an  \emph{action Courant algebroid}  \cite{lib:cou}. Here the anchor $\a$ is given by the action, and the metric and Courant bracket on sections of $\AA$ extend the metric and Lie bracket on $\dd$, regarded as constant sections.

A subbundle $E\subset \AA$ of a Courant algebroid is called a \emph{Dirac structure} if it is Lagrangian (that is, $E=E^\perp$) and its space of sections is closed under the Courant bracket. In this case, $E$ becomes a Lie algebroid, with the Lie bracket and anchor obtained by restriction of the Courant bracket and anchor on $\AA$. Note that
the anchor of a Dirac structure defines another singular foliation of $M$, whose leaves are contained in those of $\AA$. Graphs of Poisson structures and graphs of closed 2-forms define Dirac structures inside the standard Courant algebroid $\T M$.
Given a metrized Lie algebra $\dd$ with an action on $M$ with coisotropic stabilizers, any Lagrangian Lie subalgebra defines a Dirac structure $M\times \g$ inside the action Courant algebroid $M\times \dd$.


\subsection{Courant algebroid automorphisms}\label{subsec:caauto}
We denote by $\mf{aut}_{\ca{CA}}(\AA)$ the infinitesimal Courant algebroid automorphisms, given by vector fields $\wt{X}\in\mf{X}(\AA)$ on the total space $\AA$, with base vector field $X\in\mf{X}(M)$, whose local flows are by Courant algebroid automorphisms (i.e., preserving the vector bundle structure, anchor, bracket, and metric).  This is expressed as compatibility of the corresponding linear operator $D$ on
$\Gamma(\AA)$ (cf. Section~ \ref{subsec:convent}) with the anchor, metric and bracket:
\[ \begin{split}
\a(D\tau)&=[X,\a(\tau)],\\
\ca{L}_X \l \tau_1,\tau_2\r&=\l D\tau_1,\tau_2\r+\l\tau_1,D\tau_2\r,\\
D\Cour{\tau_1,\tau_2}&=\Cour{D\tau_1,\tau_2}+\Cour{\tau_1,D\tau_2}.
\end{split}\]
In particular, for every
section $\sigma\in \Gamma(\AA)$ the operator
$
D=\Cour{\sigma,\cdot}
$
defines an `inner' infinitesimal Courant automorphism
\begin{equation}\label{eq:alift}
\wt{\a}(\sigma)\in \aut_{\ca{CA}}(\AA)\end{equation}
lifting the vector field $\a(\sigma)$. Given a closed 2-form $\omega\in\Omega^2(M)$, with associated
bundle map $\omega^\flat\colon TM\to T^*M$, one finds that
\[D_\omega=\a^*\circ \omega^\flat\circ \a\colon \Gamma(\AA)\to \Gamma(\AA^*)\cong \Gamma(\AA)\]
defines an infinitesimal Courant automorphism, with $X=0$. If $\omega$ is exact, then this
infinitesimal Courant automorphism is inner:
\begin{lemma}\label{lem:alpha}
For any 1-form $\alpha\in \Omega^1(M)$,
\begin{equation}\label{eq:identity}
 D_{d\alpha}=- \Cour{\a^*\alpha, \cdot}.\end{equation}
 In particular, if $\alpha$ is closed then $\Cour{\a^*\alpha, \cdot}=0$.
 \end{lemma}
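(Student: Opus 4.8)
The plan is to verify the identity \eqref{eq:identity} by evaluating both sides as linear operators on $\Gamma(\AA)$, using only the defining axioms of a Courant algebroid together with the auxiliary properties listed above (the derivation rule \eqref{eq:dercou}, $\a\circ\a^*=0$, and $\a(\Cour{\sigma,\tau})=[\a(\sigma),\a(\tau)]$). Both $D_{d\alpha}$ and $-\Cour{\a^*\alpha,\cdot}$ are $\R$-linear maps $\Gamma(\AA)\to\Gamma(\AA)$ with base vector field zero (for the left side this is by definition; for the right side it follows from $\a(\Cour{\a^*\alpha,\tau})=[\a(\a^*\alpha),\a(\tau)]=[\,0,\a(\tau)\,]=0$, using $\a\circ\a^*=0$). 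Hence both sides are in fact $C^\infty(M)$-linear operators, so it suffices to check equality after pairing with an arbitrary section, i.e.\ to show $\l D_{d\alpha}\tau_1,\tau_2\r=-\l\Cour{\a^*\alpha,\tau_1},\tau_2\r$ for all $\tau_1,\tau_2\in\Gamma(\AA)$.

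First I would unwind the left-hand side: by definition $\l D_{d\alpha}\tau_1,\tau_2\r=\l \a^*(d\alpha)^\flat\a(\tau_1),\tau_2\r=\langle (d\alpha)^\flat\a(\tau_1),\a(\tau_2)\rangle = (d\alpha)(\a(\tau_1),\a(\tau_2))$, where the middle step uses that the pairing of $\a^*\beta$ with $\tau_2$ equals the natural pairing $\langle\beta,\a(\tau_2)\rangle$ of the $1$-form $\beta$ with the vector field $\a(\tau_2)$ (this is the definition of $\a^*$ as the metric-adjoint of $\a$). Then I expand $d\alpha$ via the Cartan formula $(d\alpha)(X_1,X_2)=\L_{X_1}\langle\alpha,X_2\rangle-\L_{X_2}\langle\alpha,X_1\rangle-\langle\alpha,[X_1,X_2]\rangle$ with $X_i=\a(\tau_i)$.

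Next I would unwind the right-hand side using axioms (i) and (iii). Writing $\sigma=\a^*\alpha$, axiom (iii) gives $\Cour{\tau_1,\a^*\alpha}=-\Cour{\a^*\alpha,\tau_1}+\a^*d\l\a^*\alpha,\tau_1\r$, and $\l\a^*\alpha,\tau_1\r=\langle\alpha,\a(\tau_1)\rangle$. Pairing axiom (i) (applied with $\sigma=\a^*\alpha$) against $\tau_2$ and also against $\tau_1$, together with $\a(\a^*\alpha)=0$ so that $\L_{\a(\a^*\alpha)}(\cdot)=0$, lets me express $\l\Cour{\a^*\alpha,\tau_1},\tau_2\r+\l\tau_1,\Cour{\a^*\alpha,\tau_2}\r=0$; combining this with the symmetrization from axiom (iii) and the observation $\l\tau_1,\a^*d g\r=\langle dg,\a(\tau_1)\rangle=\L_{\a(\tau_1)}g$ for any $g\in C^\infty(M)$, everything collapses to the same Cartan-formula expression obtained for the left-hand side. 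Matching the terms term-by-term then yields \eqref{eq:identity}; the final sentence of the lemma is immediate, since $d\alpha=0$ forces $D_{d\alpha}=0$, hence $\Cour{\a^*\alpha,\cdot}=0$.

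I do not anticipate a serious obstacle here: the proof is a bookkeeping exercise in the Courant axioms, and the only point requiring a little care is the correct handling of signs and of the skew term $\a^*d\l\tau_1,\tau_2\r$ coming from axiom (iii) when one wants to pass between $\Cour{\tau_1,\a^*\alpha}$ and $\Cour{\a^*\alpha,\tau_1}$. The mildly delicate step is making sure the symmetric part contributed by axiom (iii) is killed when pairing against $\tau_2$ in the right way — i.e.\ recognizing that $\l\a^*dg,\tau_2\r = \L_{\a(\tau_2)} g$ is exactly the term needed to reproduce the Cartan-formula expansion of $(d\alpha)(\a(\tau_1),\a(\tau_2))$. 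Once the two sides are written out in the Cartan form they visibly agree.
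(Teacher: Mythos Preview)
Your approach is correct and essentially the same as the paper's: both compute $\l\Cour{\a^*\alpha,\tau_1},\tau_2\r$ via axioms (a) and (c), reduce to the Cartan formula for $d\alpha$ evaluated on $\a(\tau_1),\a(\tau_2)$, and identify this with $\l D_{d\alpha}\tau_1,\tau_2\r$. The paper's organization is marginally more direct---it applies axiom (c) first to swap $\Cour{\a^*\alpha,\gamma}$ for $-\Cour{\gamma,\a^*\alpha}+\a^*\d\l\a^*\alpha,\gamma\r$ and then axiom (a) once with $\sigma=\gamma$, bypassing your preliminary antisymmetry step from axiom (a) with $\sigma=\a^*\alpha$---but the content is identical.
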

 \begin{proof}
We calculate, for any  $\gamma,\zeta\in\Gamma(\AA)$:
\begin{align*}
\l \Cour{\a^*\alpha, \gamma},\zeta\r&=\l \a^*\d \l\a^*\alpha,\gamma\r-\Cour{\gamma,\a^*\alpha},\zeta\r\\
&=\l \d\l\alpha,\a(\gamma)\r,\a(\zeta)\r -\L_{\a(\gamma)}\l \a^*\alpha,\zeta\r
+\l \a^*\alpha,\Cour{\gamma,\zeta}\r\\
&=\L_{\a(\zeta)}\l \alpha,\a(\gamma)\r-\L_{\a(\gamma)}\l\alpha,\a(\zeta)\r+\l\alpha,[\a(\gamma),\a(\zeta)]\r\\
&=-(\d\alpha)(\a(\gamma),\a(\zeta))\\
&=-\l D_{d\alpha}(\gamma),\zeta\r.\qedhere
\end{align*}
\end{proof}
We will use the notation
$$
R_\omega=\exp(D_\omega)=\on{id}+D_\omega
$$
for the global Courant automorphism defined by a closed 2-form $\omega$, and refer to it as a {\em gauge transformation}.

\begin{proposition}\label{prop:gaugerelated}
Let $\sigma,\tau\in \Gamma(\AA)$, with $\tau-\sigma = \a^*\beta$ for some $\beta\in \Omega^1(M)$, and let $X=\a(\tau)=\a(\sigma)$. Then the (local) flows $\wt{\psi}_s,\wt{\phi}_s$ on $\AA$ defined by
$\wt{\a}(\tau),\wt{\a}(\sigma)$ are related by a family of
gauge transformations:
\[ \wt{\psi}_s=R_{\varpi_s}\circ \wt{\phi}_s,\ \ \  \varpi_s=  - \d \int_0^s (\phi_{u})_*\beta\ \d u,\]
where $\phi_s$ is the flow of $X$.
\end{proposition}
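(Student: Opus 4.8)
The plan is to reduce the claim to a differential equation in the parameter $s$ and solve it. First I would observe that both $\wt\psi_s$ and $\wt\phi_s$ cover the same flow $\phi_s$ of $X$, since $\a(\tau)=\a(\sigma)=X$; hence the composition $\wt\psi_s\circ\wt\phi_s^{-1}$ is a (local) vector bundle automorphism of $\AA$ covering the \emph{identity} on $M$, and it makes sense to ask whether it equals the gauge transformation $R_{\varpi_s}$. Note that at $s=0$ both sides are the identity, and $\varpi_0=0$, so it suffices to check that the $s$-derivatives agree — this is where the one-parameter-group structure of the flows is used.

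The key computation is to differentiate $\wt\psi_s$ at a fixed time. Writing $D_\tau=\Cour{\tau,\cdot}$ and $D_\sigma=\Cour{\sigma,\cdot}$ for the generating operators of $\wt\a(\tau),\wt\a(\sigma)$, the difference is $D_\tau-D_\sigma=\Cour{\a^*\beta,\cdot}$. By Lemma~\ref{lem:alpha}, $\Cour{\a^*\beta,\cdot}=-D_{d\beta}$, an infinitesimal gauge transformation. The natural move is then to conjugate this to the `frame' moving with $\wt\phi_s$: a standard variation-of-parameters argument shows that $\wt\psi_s\circ\wt\phi_s^{-1}$ is generated by the time-dependent operator obtained by transporting $D_\tau-D_\sigma=-D_{d\beta}$ along the flow of $\wt\phi_s$. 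Because $\wt\phi_s$ covers $\phi_s$ and acts by Courant automorphisms, conjugating the gauge transformation $R_{d\beta}$ by $\wt\phi_s$ produces the gauge transformation by the pushed-forward form, i.e.\ $R_{(\phi_s)_*(d\beta)}=R_{d((\phi_s)_*\beta)}$; equivalently the generator at time $u$ is $-D_{d((\phi_u)_*\beta)}$. Integrating, and using that closed $2$-forms $\omega$ give $D_\omega$ with $D_{\omega_1}D_{\omega_2}=0$ (so that $R_\omega=\on{id}+D_\omega$ and the time-ordered exponential collapses to an ordinary integral, cf.\ the formula $R_\omega=\exp(D_\omega)$ preceding the statement), one gets
\[
\wt\psi_s\circ\wt\phi_s^{-1}=R_{\varpi_s},\qquad \varpi_s=-\,\d\!\int_0^s (\phi_u)_*\beta\;\d u,
\]
which rearranges to the asserted identity $\wt\psi_s=R_{\varpi_s}\circ\wt\phi_s$.

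To make the conjugation step precise I would verify the elementary identity $\wt\phi_s\circ R_\omega\circ \wt\phi_s^{-1}=R_{(\phi_{-s})^*\omega}$ for a Courant automorphism $\wt\phi_s$ covering $\phi_s$: since $D_\omega=\a^*\circ\omega^\flat\circ\a$ is built naturally from $\omega$ and the anchor, and Courant automorphisms intertwine anchors, conjugation simply pulls back $\omega$; then translate between pullback by $\phi_{-s}$ and pushforward by $\phi_s$ to match the sign conventions of Section~\ref{subsec:convent}. The other point needing care is that we are working with \emph{local} flows, so all statements should be read germ-wise near $N$ (or on the common domain of definition of $\wt\psi_s$ and $\wt\phi_s$), and one should check the two vanish on $N$ and interact well there, though this is automatic since the flows are complete on the relevant neighbourhoods by the discussion in Sections~\ref{sec:euler}--\ref{sec:splitla}.

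\textbf{Main obstacle.} The routine part is the variation-of-parameters bookkeeping; the genuinely load-bearing ingredients are (i) Lemma~\ref{lem:alpha}, which is what turns the ``inner'' difference $\Cour{\a^*\beta,\cdot}$ into a gauge transformation $-D_{d\beta}$, and (ii) getting the sign and the pushforward-versus-pullback conventions in the conjugation identity exactly right so that the final integrand is $(\phi_u)_*\beta$ rather than $(\phi_{-u})_*\beta$ or its negative. I expect the sign tracking, compounded with the flow sign convention $\L_X=\frac{\p}{\p s}\big|_{s=0}(\varphi_{-s}^X)^*$, to be the step most likely to trip one up; everything else is formal.
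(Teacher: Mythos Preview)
Your proposal is correct and follows essentially the same route as the paper: differentiate $\wt{\psi}_s\circ\wt{\phi}_s^{-1}$, invoke Lemma~\ref{lem:alpha} to turn $\Cour{\a^*\beta,\cdot}$ into $-D_{d\beta}$, use that the Courant automorphism covering $\phi_s$ conjugates this to $-D_{d(\phi_s)_*\beta}$, and integrate (the paper phrases the last step as ``both sides have the same $s$-derivative'' rather than ``the time-ordered exponential collapses'', but these are equivalent). Your aside about behaviour on $N$ and completeness is extraneous---the proposition is a general statement about sections of $\AA$ and makes no reference to any submanifold---but this does not affect the argument.
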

\begin{proof}
Recalling the action of automorphisms of $\AA$ on $\Gamma(\AA)$ (see Section~\ref{subsec:convent}), which we denote by a dot,
we will show that
\[ \wt{\psi}_s\circ \wt{\phi}_s^{-1}.\gamma=R_{\varpi_s}(\gamma)
\]
for all sections $\gamma\in \Gamma(\AA)$.
Since the two sides coincide for $s=0$, it suffices to compare their $s$-derivatives:
\begin{align*}
 \f{d}{d s}(\wt{\psi}_s\circ \wt{\phi}_s^{-1}).\gamma&=
\wt{\psi}_s\circ \Cour{\tau-\sigma,\cdot}\circ \wt{\phi}_s^{-1}.\gamma\\
&= \wt{\psi}_s\circ \Cour{\a^*\beta,\cdot}\circ \wt{\phi}_s^{-1}.\gamma\\
&=\wt{\psi}_s\circ D_{-d\beta}\circ \wt{\phi}_s^{-1}.\gamma\\
&= D_{-d(\phi_s)_*\beta}\circ \wt{\psi}_s\circ\wt{\phi}_s^{-1}.\gamma\\
&=D_{-d(\phi_s)_*\beta}(\gamma)\\
&=\f{d}{d s} R_{\varpi_s}.\gamma,
\end{align*}
where the second-to-last last step used that $\wt{\phi_s}$ and $\wt{\psi}_s$ have the same base map $\phi_s$, and hence $\a(\wt{\psi}_{s}\circ \wt{\phi}_{s}^{-1}.\gamma)=\a(\gamma)$.
\end{proof}
Note that the formula can also be written as
\begin{equation}\label{eq:altformula}
 \wt{\psi}_s=\wt{\phi}_s \circ R_{\omega_s},\ \ \ \omega_s= - \d \int_0^s (\phi_{u})^*\beta\ \d u.\end{equation}
This follows from $R_{\varpi_s}\circ \wt{\phi}_s=\wt{\phi}_s\circ R_{\phi_s^*\varpi_s}$, since
$\phi_s^*\varpi_s=\omega_s$.

\subsection{Pullbacks of Courant algebroids}\label{subsec:pull}

Given a Courant algebroid $\AA$ over $M$ and a map $\varphi\colon N\to M$ transverse to the anchor of $\a\colon \AA\to TM$, we define the {\em pull-back Courant algebroid} $\varphi^!\AA$ over $N$ as follows (see \cite[Section 2.2]{lib:cou}). Let
\begin{equation}\label{eq:C}
C\subset \AA\times \T N
\end{equation}
be the fiber product with respect to the anchor $\a \colon \AA\to TM$ and the map $T\varphi\circ \a_{\T N}\colon \T N\to TM$.
Then $C$ is a coisotropic subbundle along $\on{Gr}(\varphi)\subset M\times N$, and $C^\perp$
 lies in the kernel of the anchor. It follows that the quotient
 \[
 \varphi^!\AA=C/C^\perp
 \]
inherits a Courant algebroid structure over $N\cong \on{Gr}(\varphi)$, with the Courant bracket induced from that on sections of $\AA\times \T N$. (Note that the pull-back $\varphi^!\AA$ as a Courant algebroid is different from the pull-back as an anchored vector bundle.)

The diagonal inclusion of $C$ in
$\AA\times \ol{\varphi^!\AA}$ is a Lagrangian subbundle along the graph of $\varphi$, and
defines a {\em Courant morphism} (in the sense of \cite[Sec.~2.2]{bur:cou}),
\[ \varphi_! \colon \varphi^!\AA\da \AA,
\]
see \cite[Proposition 2.10]{lib:cou}.
Here $\ol{\varphi^!\AA}$ denotes the Courant algebroid with the same bracket and anchor as $\varphi^!\AA$ but with minus the metric $-\l\cdot,\cdot\r$. We have that $(\varphi\circ \varphi')^!\AA
=(\varphi')^!\,\varphi^!\,\AA$ under composition of maps, provided that the appropriate transversality conditions are satisfied.

Given a Dirac structure $E\subset \AA$ such that $\varphi$ is transverse to the anchor of $E$, one obtains a pull-back Dirac structure $\varphi^!E\subset \varphi^!\AA$ as the image of  $C\cap (E\times TN)$ under the quotient map $C\to C/C^\perp=\varphi^!E$. (Its Lie algebroid structure coincides with that as the pull-back Lie algebroid.)

Special cases of the pull-back of Courant algebroids include:
\begin{enumerate}
\item $\varphi^!\T M=\T N$. (See \cite[Proposition 2.9]{lib:cou}.)
\item If $\varphi\colon M\times Q\to M$ is the projection onto the first factor, then $\varphi^!\AA
=\AA\times \T Q$.
\item If $\varphi$ is a diffeomorphism, then $\varphi^!\AA$ is the usual pull-back
$\varphi^*\AA$ as a vector bundle.
\end{enumerate}
Given a section $\tau\in \Gamma(\AA)$ and a vector field $Z\in \mf{X}(N)$ such that
$Z\sim_{\varphi}\a(\tau)$, we obtain a section of
$\varphi^!\AA$ by restricting $\tau\times Z\in \Gamma(\AA\times \T N)$ to a section of $C\to \on{Gr}(\varphi)\cong N$, followed by the quotient map to $C/C^\perp$. We will denote this section by
\begin{equation}\label{eq:awfulnotation}
\tau\times_\varphi Z\in \Gamma(\varphi^!\AA);
\end{equation}
its image under the anchor is $Z$. Given another map $\varphi'\colon N'\to N$, transverse to the anchor of $\varphi^!\AA$, and a vector field $Z'\in\mf{X}(N')$ such that $Z'\sim_{\varphi'}Z$, we have that
\[ \tau\times_{\varphi\circ \varphi'}Z'=(\tau\times_\varphi Z)\times_{\varphi'}Z'.\]



\subsection{The splitting theorem}
 Suppose that $\AA\to M$ is a Courant algebroid whose anchor $\a$ is transverse to a given submanifold $i\colon N\hra M$.
 The description of the pull-back Courant algebroid simplifies \cite[Proposition 2.8]{lib:cou}:
 \begin{equation} i^!\AA=\a^{-1}(TN)/\a^{-1}(TN)^\perp.\end{equation}
Any section $\tau\in \Gamma(\AA)$ such that
 $\a(\tau)$ is tangent to $N$ defines a section
 \begin{equation}\label{eq:sectionpullback}
  i^!\tau\in \Gamma(i^!\AA),\end{equation}
 by taking the image of $\tau|_N\in \Gamma(C)$ under the quotient map; in terms of
 \eqref{eq:awfulnotation} we have that $i^!\tau=\tau\times_i \a(\tau)|_N$.
 As usual, we denote by $p\colon \nu(M,N)\to N$ the projection; we take
\begin{equation}\label{eq:nuA}
\nu(\AA)=p^!i^!\AA \to \nu(M,N)
\end{equation}
to be the linear approximation for $\AA$ around $N$. This bundle has a distinguished \emph{Euler section}
\begin{equation}\label{eq:euler2}
 \epsilon =0\times_p \E \in \Gamma(\nu(\AA)).
 \end{equation}
 By construction,
$\l\epsilon,\epsilon\r=0,\ \epsilon|_N=0$, and  $\a(\epsilon)=\E$. For $\tau\in \Gamma(\AA)$ such that $\a(\tau)$ is tangent to $N$, we have the section
$\nu(\tau)=i^!\tau\times_p \nu(\a(\tau))$. If $\sigma$ is an Euler-like section,
then $\nu(\sigma)=\epsilon$. Sometimes, it is useful to take $\sigma$ to be isotropic:

\begin{lemma}
There exists an Euler-like section
 $\sigma\in\Gamma(\AA)$ which is  \emph{isotropic}, that is,  $\l\sigma,\sigma\r=0$. Given a proper $G$-action on $M$, with a lift to an action by Courant automorphisms of $\AA$, one can take $\sigma$ to be $G$-invariant.
\end{lemma}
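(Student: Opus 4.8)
The strategy is to produce the section by correcting an arbitrary Euler-like section $\sigma_0$ by a term of the form $\a^*\beta$: since $\a\circ\a^*=0$, such a correction does not change the anchor, hence preserves the Euler-like property as long as $\beta|_N=0$, while it changes $\l\sigma_0,\sigma_0\r$ in a way that can be solved away.

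Concretely, by Lemma~\ref{EL-ex} I would first choose an Euler-like section $\sigma_0\in\Gamma(\AA)$ (taken $G$-invariant in the equivariant case), and write $X=\a(\sigma_0)$ for the associated complete Euler-like vector field, with attracting set $U\subseteq M$ on which the flow $\varphi_s^X$ is conjugate, via the tubular neighborhood embedding of Theorem~\ref{th:euler}, to the contracting Euler flow on $\nu(M,N)$. For any $\beta\in\Omega^1(M)$ with $\beta|_N=0$, the section $\sigma:=\sigma_0+\a^*\beta$ still vanishes along $N$ and satisfies $\a(\sigma)=X$, hence is Euler-like. Using the defining property $\l\a^*\mu,\tau\r=\l\mu,\a(\tau)\r$ of $\a^*$ together with $\a\circ\a^*=0$, one computes
\[ \l\sigma,\sigma\r=\l\sigma_0,\sigma_0\r+2\,\iota_X\beta . \]
So it suffices to solve the transport equation $\iota_X\beta=-\hh\l\sigma_0,\sigma_0\r$ with $\beta|_N=0$; note that the right-hand side $g:=-\hh\l\sigma_0,\sigma_0\r$ vanishes to second order along $N$, because $\sigma_0|_N=0$.

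I would solve this by a homotopy operator along the flow of $X$: on $U$, set $h:=\int_0^\infty(\varphi_s^X)^*g\,\d s$. Conjugating by the tubular embedding and substituting $r=e^{-s}$, this becomes $v\mapsto\int_0^1 r^{-1}\,\wt g(rv)\,\d r$ for the pull-back $\wt g$ of $g$ to $\nu(M,N)$; since $\wt g$ vanishes to second order along the zero section, Hadamard's lemma writes $\wt g(rv)=r^2G(v,r)$ with $G$ smooth, so the integral converges, $h$ is smooth, and $h$ vanishes to second order along $N$ — in particular $\d h|_N=0$. Differentiating under the integral sign, using $\tfrac{d}{ds}(\varphi_s^X)^*=-(\varphi_s^X)^*\L_X$, gives $\L_X h=g$, that is $\iota_X(\d h)=g$. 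Multiplying $\d h$ by a cutoff equal to $1$ near $N$ and supported in $U$ yields a global $\beta$ with $\beta|_N=0$, and $\sigma=\sigma_0+\a^*\beta$ is then Euler-like and isotropic on a neighborhood of $N$ (which is what is used in the normal form theorem, and which can be promoted to isotropy on all of $M$ by first passing to the standard vector-bundle model, where $X$ may be taken globally contracting). For the equivariant statement, one takes $\sigma_0$ and the cutoff $G$-invariant; then $X$, $g$, and hence the canonically defined $h$ and $\sigma$ are $G$-invariant — alternatively one averages a non-invariant $\beta$ over slices for the proper action, as in Lemma~\ref{EL-ex}.

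The main obstacle is the analytic point in the previous paragraph: verifying that the homotopy integral converges and extends smoothly up to and along $N$, which is exactly where the second-order vanishing of $\l\sigma_0,\sigma_0\r$ and the near-$N$ contraction of the Euler-like flow are needed. Once that is in place, the remainder consists of short computations with the Courant axioms ($\a\circ\a^*=0$ and the adjointness of $\a,\a^*$) together with the bookkeeping of the cutoff and the vector-bundle reduction.
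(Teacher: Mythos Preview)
Your argument is correct and takes a genuinely different route from the paper. The paper proceeds geometrically: it picks a subbundle $R\subset TM$ with $R|_N=TN$, notes that $\a^{-1}(R)$ is coisotropic near $N$, and then invokes a linear-algebra lemma (Lemma~\ref{lem:iso}) to produce an \emph{isotropic} complement $F\subset \AA|_{U_1}$; an Euler-like section is then chosen inside $F$ and extended by a bump function, so that isotropy is automatic and global. Your approach is analytic: you start from an arbitrary Euler-like $\sigma_0$ and correct by $\a^*\beta$, reducing to the transport equation $\iota_X\beta=-\tfrac12\l\sigma_0,\sigma_0\r$, which you solve by the flow-homotopy integral, exploiting the second-order vanishing of $\l\sigma_0,\sigma_0\r$ along $N$.

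What each buys: the paper's method avoids any convergence estimate and yields a section that is isotropic on all of $M$ in one stroke, at the cost of the auxiliary isotropic-complement lemma. Your method is more hands-on and self-contained (no Lemma~\ref{lem:iso}), and the explicit formula for $\beta=\d h$ makes the $G$-equivariance immediate; however, as you note, your cutoff of $\d h$ gives $\l\sigma,\sigma\r=(1-\chi)\l\sigma_0,\sigma_0\r$, so isotropy holds only near $N$. Your fix --- work on the attracting set $U$ where the integral for $h$ converges everywhere, so that $\sigma':=\sigma_0+\a^*\d h$ is isotropic on all of $U$, and then replace $\sigma'$ by $\rho\sigma'$ for a bump function $\rho$ supported in $U$ and equal to $1$ near $N$ --- does give global isotropy (since $\l\rho\sigma',\rho\sigma'\r=\rho^2\l\sigma',\sigma'\r=0$), with $\a(\rho\sigma')=\rho X$ still Euler-like after the standard completeness argument. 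You should state this step explicitly rather than leave it as a parenthetical remark.
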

\begin{proof}
Choose a subbundle $R\subset TM$ with $R|_N=TN$.
Since $N$ is transverse to the anchor of $\AA$, the subbundle $R$ remains transverse to the
anchor over some neighborhood $U_1\subset M$ of $N$; hence, over this neighborhood, $\a^{-1}(R|_{U_1})\subset \AA|_{U_1}$ is a subbundle. Since $R|_{U_1}$ contains the coisotropic subspace $\ker(\a_m)$ for $m\in U_1$, it is coisotropic. Choose a complementary subbundle $F_1\subset \AA|_{U_1}$;
by Lemma \ref{lem:iso} below this determines a complementary  \emph{isotropic} subbundle $F\subset \AA|_{U_1}$. By construction, the restriction of the anchor to
 $F$ is transverse to $N$. Hence, there exists an Euler-like section $\sigma_1\in \Gamma(F)$ (see Lemma~\ref{EL-ex}).
After multiplying (if necessary) by a `bump function' supported in $U_1$, the section $\sigma_1$ extends by zero to an Euler-like isotropic section $\sigma\in \Gamma(\AA)$. In the $G$-equivariant case, if the action on $M$ is proper, one can take all of the choices to be
$G$-equivariant, and the resulting $\sigma$ is then $G$-invariant.
\end{proof}

The previous proof used the following fact:
\begin{lemma}\label{lem:iso}
Let $V\to M$ be a metrized vector bundle, and $C\to M$ a coisotropic subbundle. Then any choice of a subbundle $F_1\subset V$ complementary to $C$ determines  an isotropic subbundle $F\subset C$ complementary to $C$.
\end{lemma}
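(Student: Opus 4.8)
The statement to prove is Lemma \ref{lem:iso}: given a metrized vector bundle $V\to M$, a coisotropic subbundle $C\subset V$, and a subbundle $F_1$ complementary to $C$ (so $V=C\oplus F_1$), we want an \emph{isotropic} subbundle $F\subset V$ complementary to $C$. My plan is to construct $F$ by correcting $F_1$ with a map into $C^\perp$, which is legitimate precisely because $C$ being coisotropic means $C^\perp\subset C$. Note first that $\dim F_1=\dim F_1^\perp=\operatorname{rk}V-\operatorname{rk}C=\operatorname{rk}C^\perp$ pointwise, and since $C^\perp\subset C$ while $F_1\cap C=0$, the pairing restricts to a perfect pairing between $F_1$ and $C^\perp$; in particular $C^\perp\cap F_1=0$, so $C=C^\perp\oplus(F_1\cap C)^{??}$—more usefully, $V=C^\perp\oplus F_1\oplus R$ for some complement, but I will not need this. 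The key point is that $C^\perp$ is a nondegenerately-paired partner of $F_1$.

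The construction: look for $F$ of the form $F=\{\,x+L(x) : x\in F_1\,\}$ where $L\colon F_1\to C^\perp$ is a bundle map. Any such $F$ is automatically complementary to $C$ (since $F$ projects isomorphically onto $F_1$ along $C\supseteq C^\perp$, and $\operatorname{rk}F=\operatorname{rk}F_1$), so the only condition is isotropy. For $x,y\in F_1$ we compute
\[
\langle x+L(x),\,y+L(y)\rangle=\langle x,y\rangle+\langle x,L(y)\rangle+\langle L(x),y\rangle,
\]
because $\langle L(x),L(y)\rangle=0$ as $C^\perp$ is isotropic ($C^\perp\subset C$ forces $C^\perp\subset (C^\perp)^\perp$). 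So isotropy of $F$ is equivalent to
\[
\langle L(x),y\rangle+\langle x,L(y)\rangle=-\langle x,y\rangle\qquad\text{for all }x,y\in F_1.
\]
Identify $C^\perp\cong F_1^*$ via the perfect pairing noted above; then $L$ becomes a bundle map $F_1\to F_1^*$, i.e. a bilinear form $b_L$ on $F_1$ with $b_L(x,y)=\langle L(x),y\rangle$, and the equation reads $b_L(x,y)+b_L(y,x)=-\langle x,y\rangle$, that is, the symmetrization of $b_L$ equals $-\tfrac12\langle\cdot,\cdot\rangle|_{F_1\times F_1}$. Thus one may simply take $b_L=-\tfrac12\langle\cdot,\cdot\rangle|_{F_1}$ (a symmetric form, so it equals its own symmetrization), which defines $L$ canonically once the identification $C^\perp\cong F_1^*$ is fixed. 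This $L$ is a smooth bundle map since all the data ($\langle\cdot,\cdot\rangle$, the splitting $V=C\oplus F_1$, the induced identification $C^\perp\cong F_1^*$) are smooth, and $F:=(\operatorname{id}+L)(F_1)$ is the desired isotropic complement.

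I do not anticipate a serious obstacle here; the only thing to be careful about is the identification $C^\perp\cong F_1^*$ — one must check that the pairing $F_1\times C^\perp\to\mathbb R$ is indeed perfect (equivalently fiberwise nondegenerate), which follows because $F_1\cap C^\perp\subset F_1\cap C=0$ and the rank count above gives equality of dimensions, together with the general fact that in a nondegenerate metric a subspace $W$ with $W\cap C=0$ pairs nondegenerately with any subspace of dimension $\operatorname{rk}V-\operatorname{rk}C$ transverse to $C$; since $C^\perp$ has exactly that dimension and $C^\perp\cap F_1=0$, nondegeneracy of $\langle\cdot,\cdot\rangle$ on $F_1\oplus C^\perp$ follows. (If one prefers to avoid the identification entirely, one can instead argue abstractly: the space of bundle maps $L\colon F_1\to C^\perp$ satisfying the displayed symmetrization condition is an affine bundle modeled on skew maps $F_1\to C^\perp$, and it is nonempty because the perfect pairing lets us solve the equation fiberwise and smoothly.) Everything is pointwise linear algebra made into a global statement by the smoothness of the chosen splitting, so there is nothing deeper to overcome.
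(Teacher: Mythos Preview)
Your proof is correct and is essentially the same construction as the paper's: the paper reduces to the subbundle $V'=C^\perp\oplus F_1$ (after checking $V'\cap(V')^\perp=0$, which is equivalent to your perfect pairing between $F_1$ and $C^\perp$), observes that $C^\perp$ is Lagrangian there, and then cites the standard ``midpoint of $F_1$ and $F_1^\perp$'' construction for an isotropic complement to a Lagrangian---which, unwound, is exactly your graph $F=(\operatorname{id}+L)(F_1)$ with $b_L=-\tfrac12\langle\cdot,\cdot\rangle|_{F_1}$. Your version has the minor advantage of being self-contained rather than appealing to an external reference.
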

\begin{proof}	
This is well-known in case that the metric on $V$ is split, and $C$ is Lagrangian: In this case, one takes $F$ to be the midpoint between $F_1$ and $F_1^\perp$ in the affine space of complementary subspaces.
(See e.g. \cite[Proposition 1.4]{me:clifbook}.) We may reduce to this case, by considering the subbundle
$V'=C^\perp\oplus F_1$. Since
\[ V'\cap (V')^\perp=(C^\perp+ F_1) 	\cap C\cap F_1^\perp
=C^\perp\cap F_1^\perp=(C+F_1)^\perp=V^\perp=0,\]
we see that $V'$ is a metrized subbundle, containing $C'=C^\perp$ as a Lagrangian subbundle, and $F_1$  as a complementary subbundle.
\end{proof}

Given a Dirac structure $E\subset \AA$ such that its anchor $\a|_E$ is transverse to
$N$, we obtain a Dirac structure
\begin{equation}\label{eq:nuE}
\nu(E)=p^!i^!E\subset \nu(\AA).
\end{equation}

\begin{theorem}[Splitting theorem for Courant algebroids]\label{th:split-courant}
 Let $\AA\to M$ be a Courant algebroid, and let $N\subset M$ be a submanifold transverse to the anchor of $\AA$.
  \begin{enumerate}
 \item
 There exists an isotropic Euler-like section $\sigma\in \Gamma(\AA)$, and any choice of an Euler-like section (not necessarily isotropic) determines an isomorphism of Courant algebroids,
 \[ \xymatrix{ \nu(\AA)\ar[r]_{\wt{\psi}} \ar[d] & \AA|_U\ar[d]\\ \nu(M,N)\ar[r]_{\psi} & U
 }
 \]
 where $\psi$  is the tubular neighborhood embedding $\nu(M,N)\to U\subset M$
  defined by $X=\a(\sigma)$.
 \item
 For any section $\tau\in\Gamma(\AA)$ such that $\a(\tau)$ is tangent to $N$ and $\Cour{\sigma,\tau}=0$, we have that $\wt{\psi}^*\tau=\nu(\tau)$, where
 $\wt{\psi}^*\tau=\wt{\psi}^{-1}\circ \tau\circ \psi$. In particular, if $\sigma$ is chosen to be isotropic
then
 \[ \wt{\psi}^*\sigma=\epsilon.\]
 \item If $E\subset \AA$ is a Dirac structure in $\AA$, such that its anchor $\a_E$
 is transverse to $N$, and if $\sigma$ is an Euler-like section of $E$, then  the Courant algebroid isomorphism $\wt{\psi}\colon \nu(\AA)\to \AA|_U$ takes $\nu(E)$ to $E|_U$.
 \end{enumerate}
Given a  proper $G$-action on $M$, with a lift to an action by Courant automorphisms of $\AA$,
then the choice of a $G$-invariant Euler-like section $\sigma$ gives a  $G$-equivariant map
$\wt{\psi}$.
 \end{theorem}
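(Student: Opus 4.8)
The plan is to mimic, essentially verbatim, the deformation-space argument used for anchored vector bundles and $L_\infty$-algebroids, but now working with Courant algebroids and taking care of the metric and the Courant-morphism formalism. The key observation is that the constructions of Section~\ref{subsec:pull} are functorial, so applying the pull-back Courant algebroid construction to the map $\kappa\colon \D(M,N)\to M$ (which is transverse to $\a$, since $\a$ is transverse to the inclusion $i\colon N\hra M$) produces a Courant algebroid $\D(\AA)=\kappa^!\AA\to \D(M,N)$ over the deformation space, with $\D(\AA)|_{M\times\R^\times}=\AA\times \T\R^\times$ and $j^!\D(\AA)=\nu(\AA)$. As in Sections \ref{it:2c}--\ref{it:2d} of Section~\ref{sec:splitla}, one has sections $\theta=0\times_\kappa \Theta$ and, for each section $\tau\in\Gamma(\AA)$ with $\a(\tau)$ tangent to $N$, a section $\D(\tau)=\tau\times_\kappa \D(\a(\tau))$; the restriction of $\D(\tau)$ to the zero fibre is $\nu(\tau)$.

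First I would prove part (1). By the isotropic-Euler-like existence lemma, choose an (isotropic, if desired) Euler-like section $\sigma\in\Gamma(\AA)$. Then $\D(\sigma)|_{\nu(M,N)}=\nu(\sigma)=\epsilon$ while $\theta|_{\nu(M,N)}=-\epsilon$, so $\D(\sigma)+\theta$ vanishes along the zero fibre and is divisible by $t$; set $w=\tfrac{1}{t}(\D(\sigma)+\theta)\in\Gamma(\D(\AA))$, a section with $\a(w)=W$. Exactly as in \eqref{eq:alift}, the operator $\Cour{w,\cdot}$ defines an inner infinitesimal Courant automorphism $\wt{W}=\wt{\a}(w)\in\aut_{\ca{CA}}(\D(\AA))$ with base vector field $W$. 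Since $W$ is complete over $\D(U,N)$, and infinitesimal automorphisms of a vector bundle are complete precisely when their base is (Section~\ref{subsec:convent}), $\wt{W}$ is complete over $\D(\AA|_U)=\D(\AA)|_{\D(U,N)}$, and integrates to a one-parameter family of Courant algebroid automorphisms $\wt{\varphi}_s$ covering the flow $\varphi_s$ of $W$. The map $\varphi_{-1}$ intertwines the inclusions $j_0$ of $\pi^{-1}(0)=\nu(M,N)$ and $j_1$ of $\pi^{-1}(1)\cap\D(U,N)\cong U$; pulling back the Courant morphism $\kappa_!$ and the canonical identifications $j_0^!\D(\AA)\cong\nu(\AA)$, $j_1^!\D(\AA)\cong\AA|_U$, one obtains the desired Courant algebroid isomorphism $\wt{\psi}\colon \nu(\AA)\to\AA|_U$, expressible as the composition $\kappa_!\circ \wt{\varphi}_{-1}\circ j_!$ along the maps $\kappa\circ\varphi_{-1}\circ j=\psi$, in complete parallel with \eqref{eq:LAsplit12} and \eqref{eq:LAsplit1}.

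For part (2), the argument is the one already flagged in Section~\ref{sec:splitla}: if $\a(\tau)$ is tangent to $N$ and $\Cour{\sigma,\tau}=0$, then $\Cour{w,\D(\tau)}=0$ — this is evident over $M\times\R^\times$, where $w=\tfrac1t\sigma+\tfrac{\p}{\p t}$ and $\D(\tau)=\tau\times 0$, hence holds everywhere by density. Therefore the flow $\wt{\varphi}_s$ preserves the section $\D(\tau)$, and since $\D(\tau)$ restricts to $\nu(\tau)$ on the zero fibre and to $\tau$ on $\pi^{-1}(1)$, we get $\wt{\psi}^*\tau=\nu(\tau)$; taking $\tau=\sigma$ with $\sigma$ isotropic yields $\wt{\psi}^*\sigma=\epsilon$. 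For part (3), if $E\subset\AA$ is a Dirac structure with $\a_E$ transverse to $N$ and $\sigma$ is an Euler-like section of $E$, then by functoriality of the pull-back-of-Dirac-structures construction, $\D(E)=\kappa^!E\subset\D(\AA)$ is a Dirac structure, $w\in\Gamma(\D(E))$, and $\Cour{w,\cdot}$ preserves $\Gamma(\D(E))$; hence $\wt{\varphi}_s$ maps $\D(E)$ to itself, and since $\D(E)$ restricts to $\nu(E)=p^!i^!E$ over the zero fibre and to $E|_U$ over $\pi^{-1}(1)$, we conclude $\wt{\psi}(\nu(E))=E|_U$. Finally, the $G$-equivariant statement follows because every step is natural: a $G$-invariant Euler-like $\sigma$ yields a $G$-invariant $w$, hence a $G$-equivariant flow $\wt{\varphi}_s$ and a $G$-equivariant $\wt{\psi}$.

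The main obstacle I anticipate is bookkeeping with the Courant-morphism formalism rather than any genuine difficulty: unlike for (pre-)Lie algebroids, $\kappa_!$ and $j_!$ are \emph{relations} (Lagrangian subbundles over graphs), not honest bundle maps, so one must check that the composition $\kappa_!\circ\wt{\varphi}_{-1}\circ j_!$ is a well-defined Courant isomorphism and that the various identifications ($\nu(\AA)\cong j^!\D(\AA)$, $\AA|_U\cong j_1^!\D(\AA)$, $\D(\varphi^!\AA)\cong\kappa^!\varphi^!\AA$, etc.) are compatible. This is where one needs the functoriality statements of Section~\ref{subsec:pull} (and property \eqref{it:i} of the deformation space) applied carefully; a useful sanity check is that the underlying anchored-vector-bundle splitting is exactly the one from Section~\ref{sec:splitla}, so the only new content is compatibility with the metric, which is automatic since $\wt{W}=\wt{\a}(w)$ is by construction an \emph{isometric} infinitesimal automorphism.
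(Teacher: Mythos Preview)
Your proposal is correct and follows essentially the same approach as the paper: construct $\D(\AA)=\kappa^!\AA$, build the section $w=t^{-1}(\D(\sigma)+\theta)$, integrate the inner infinitesimal Courant automorphism $\wt{\a}(w)$ over $\D(U,N)$, and read off $\wt{\psi}$ from the identifications $j_0^!\D(\AA)\cong\nu(\AA)$ and $j_1^!\D(\AA)\cong\AA|_U$; parts (b) and (c) then follow from $\Cour{w,\D(\tau)}=0$ and $w\in\Gamma(\D(E))$ exactly as you outline. The paper sidesteps the bookkeeping concern you flag by defining $\wt{\psi}$ directly via these pull-back identifications (so that it is an honest bundle isomorphism from the start) and only afterwards records the description $\wt{\psi}=\kappa_!\circ\wt{\varphi}_{-1}\circ j_!$ as a remark, rather than taking the composition of Courant relations as the primary definition.
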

The proof will be given in Section \ref{sec:proof-courant}.\medskip

\begin{remark}
In the case of an exact Courant algebroid $\AA=\T M$ (with the standard Courant bracket, possibly twisted by a closed 3-form $\eta\in \Omega^3(M)$, one has a more precise description of the normal form for Dirac structures. See \cite[Theorem 5.1]{bur:spl}.
\end{remark}

  Note that if the normal bundle to the submanifold $N$ is trivial, then the choice of a trivialization $\nu(M,N)=N\times\P$ simplifies the model for the Courant algebroid $\AA$ in a neighborhood of $N$ to
 \begin{equation} \label{eq:simpl}
 \nu(\AA)=i^!\AA\times \T\P.
 \end{equation}
In the case of a Dirac structure $E\subset \AA$ with $\a_E$ transverse to $N$, the model for $E$ becomes
\begin{equation}\label{eq;simpl1}
 \nu(E)=i^!E\times T\P,
\end{equation}
as a Dirac structure in $\nu(\AA)$. In particular, taking $N$ to be a `small' transversal to a leaf, one obtains a local
 normal form for Courant algebroids, as a product of the standard Courant algebroid over the leaf and a Courant algebroid over the transversal. We will discuss this local splitting theorem in Section \ref{subsec:localsplitting}, but let us already
 point out the following consequence.

 \begin{corollary}[Local trivializations of transitive Courant algebroids]
\label{cor:transitivecour} Let $\AA\to M$ be a transitive Courant algebroid. Given a point $m\in M$ and a tubular neighborhood embedding $\psi\colon T_mM\to M$, with image $U$, there is an isomorphism with the product Courant algebroid
 \[  \AA|_U\cong \T U\times \mathfrak{q}\]
 where $\mathfrak{q}$ is a metrized  Lie algebra. Given a Dirac structure $E\subset \AA$
 which is transitive in the sense that $\a|_E$ is surjective, we may choose this isomorphism
 in such a way that it identifies
 \[ E|_U\cong TU\times\g,\]
 where $\g\subset \mf{q}$ is a Lagrangian Lie subalgebra.
 \end{corollary}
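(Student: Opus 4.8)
The plan is to deduce Corollary~\ref{cor:transitivecour} directly from the splitting theorem (Theorem~\ref{th:split-courant}) by taking $N=\{m\}$. First I would note that since $\AA$ is transitive, every point is transverse to the anchor, so $N=\{m\}$ satisfies the hypotheses of Theorem~\ref{th:split-courant}. The pull-back $i^!\AA=\a^{-1}(0)/\a^{-1}(0)^\perp$ is then a Courant algebroid over a point, hence a metrized Lie algebra; call it $\mathfrak{q}$. The normal bundle is $\nu(M,\{m\})=T_mM$, and the simplified model \eqref{eq:simpl} gives $\nu(\AA)=i^!\AA\times \T\P$ with $\P=T_mM$, i.e. $\nu(\AA)\cong \T(T_mM)\times\mathfrak{q}$. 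Applying Theorem~\ref{th:split-courant}(a) with any choice of (isotropic) Euler-like section $\sigma$ yields a Courant algebroid isomorphism $\wt\psi\colon \nu(\AA)\to\AA|_{U'}$ covering the tubular neighborhood embedding $\psi_0\colon T_mM\to U'\subset M$ determined by $\a(\sigma)$. Transporting along the diffeomorphism $\psi_0^{-1}\circ\psi\colon T_mM\to T_mM$ (which is an isomorphism of the pair $(T_mM,\{0\})$ and hence lifts canonically to a Courant algebroid isomorphism of $\T(T_mM)\times\mathfrak{q}$ onto itself, by functoriality of pull-back Courant algebroids and the fact that $\nu$ of a diffeomorphism of vector spaces fixing the origin is that diffeomorphism's derivative composed appropriately) produces the desired isomorphism $\AA|_U\cong\T U\times\mathfrak{q}$ for the prescribed $\psi$.

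For the statement about Dirac structures, I would repeat the argument with $E$ in place of $\AA$: since $\a|_E$ is surjective, $i^!E\subset i^!\AA=\mathfrak{q}$ is a Courant-algebroid pull-back of a Dirac structure over a point, hence a Lagrangian Lie subalgebra $\g\subset\mathfrak{q}$. By Theorem~\ref{th:split-courant}(c), the isomorphism $\wt\psi$ can be chosen (taking $\sigma$ to be an Euler-like section of $E$, which exists since $\a|_E$ is transverse to $\{m\}$) so that it carries $\nu(E)=p^!i^!E$ to $E|_U$. Under the identifications above, $\nu(E)=i^!E\times T\P$ inside $i^!\AA\times\T\P$ becomes $TU\times\g\subset\T U\times\mathfrak{q}$, as claimed. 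One must check that the section $\sigma\in\Gamma(E)$ used for the Dirac structure is simultaneously Euler-like as a section of $\AA$; this is immediate since the anchor of $E$ is the restriction of the anchor of $\AA$, so $\a(\sigma)$ is Euler-like on $M$ and $\sigma|_{\{m\}}=0$.

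The only genuinely delicate point is the bookkeeping that allows an \emph{arbitrary} tubular neighborhood embedding $\psi\colon T_mM\to M$ to appear, rather than the specific one produced by the Euler-like section: one needs that any two tubular neighborhood embeddings of $(M,\{m\})$ differ by precomposition with a diffeomorphism of $T_mM$ fixing $0$ with derivative the identity at $0$, and that such a diffeomorphism lifts to a Courant algebroid automorphism of $\T(T_mM)\times\mathfrak{q}$ covering it. The first fact is standard (two tubular neighborhoods are isotopic); the second follows from the pull-back functoriality recalled in Section~\ref{subsec:pull}, using $\varphi^!\T M=\T N$ and $(\varphi\circ\varphi')^!=(\varphi')^!\varphi^!$. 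I expect this gluing/transport step to be the main obstacle to a fully rigorous write-up, though conceptually it is routine; everything else is a direct specialization of Theorem~\ref{th:split-courant} and the model formulas \eqref{eq:simpl}, \eqref{eq;simpl1}.
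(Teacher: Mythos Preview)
Your overall strategy—specialize Theorem~\ref{th:split-courant} to $N=\{m\}$ and use the model \eqref{eq:simpl}—is the same as the paper's, and your treatment of the Dirac case via part~(c) is fine. The difference lies in how you obtain the \emph{prescribed} tubular neighborhood embedding $\psi$. The paper does not pick an arbitrary Euler-like section and then transport; instead it exploits transitivity directly: since $\a$ (resp.~$\a|_E$) is surjective, one can choose a splitting $TM\to \AA$ (resp.~$TM\to E$), and take $\sigma$ to be the image of $X=\psi_*\E$ under this splitting. Then $\a(\sigma)=X$ by construction, so the tubular neighborhood embedding produced by Theorem~\ref{th:split-courant} is $\psi$ itself, with image exactly $U$. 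No transport or comparison of tubular neighborhoods is needed.

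Your transport step, by contrast, has a genuine gap as written: the map $\psi_0^{-1}\circ\psi$ is only defined on $\psi^{-1}(U\cap U')$, not on all of $T_mM$, since the images $U$ and $U'$ of the two embeddings need not satisfy $U\subset U'$. One can try to repair this (e.g.\ via uniqueness of tubular neighborhoods up to isotopy, or by shrinking), but the bookkeeping is nontrivial and unnecessary once you observe that transitivity lets you \emph{choose} the Euler-like section to have anchor equal to the given Euler-like vector field. That is the key simplification you are missing.
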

 \begin{proof}
 Let $X\in\mf{X}(U)$ be the image of the Euler vector field $\E$. By Lemma \ref{lem:iso}
 below, we may choose an isotropic splitting of $\AA$, i.e., a right inverse $TM\to \AA$ to the anchor map whose image is an isotropic subbundle. The image of $X$ under the isotropic splitting is an isotropic section $\sigma\in \Gamma(\AA|_U)$ with $\sigma|_m=0$ and $\a(\sigma)=X$.

Since the anchor map of $\AA$ is surjective, the inclusion $i\colon \{m\}\to M$ is transverse to the anchor. Let $\mathfrak{q}=i^!\AA$ (recall that a Courant algebroid over a point is just a metrized  Lie algebra).  Its pull-back $\nu(\AA)=p^!i^!\AA$ to $\nu(M,\{m\})=T_mM$ is the product $\T(T_mM)\times \mathfrak{q}$. Theorem \ref{th:split-courant}, applied to $\AA|_U$,
gives an isomorphism $\wt{\psi}\colon \T(T_mM)\times \mathfrak{q}\to \AA$ with base map $\psi$. Using $\psi$, we may identify $T_mM\cong U$, hence $\T(T_mM)=\T U$.

Given a Dirac structure $E\subset \AA$ such that $\a|_E$ is surjective, we may take
$\sigma$ to be the image of $X$ under a splitting of $\a|_E\colon E\to TM$. Then the isomorphism for $\AA|_U$ restricts to an isomorphism $E_U\to TU\times\g$, where
$\g=\ker(\a_E)|_m$.
\end{proof}

\subsection{Proof of the splitting theorem}\label{sec:proof-courant}
The fact that the inclusion $i\colon N\to M$ is transverse to $\a$ implies that $\kappa\colon \D(M,N)\to M$ is transverse to $\a$. So we can define the Courant algebroid
\[
\D(\AA)=\kappa^!\AA\to \D(M,N).
\]
This satisfies
\[ \D(\AA)|_{M\times\R^\times}=\AA\times \T\R^\times,\ \ \
j^!\D(\AA)=\nu(\AA),
\]
since over $M\times \R^\times$, $\kappa$ is just projection to the first factor, and
since $\kappa\circ j=i\circ p$. Making use of the construction \eqref{eq:awfulnotation}, we define various sections of $\D(\AA)$:
\begin{enumerate}
\item[(i)]
Since $\Theta\sim_\kappa 0$ the section
\[ \theta=0\times_\kappa \Theta\in \Gamma(\D(\AA))\]
is defined; it has the properties $\a(\theta)=\Theta$ as well as
\[ \theta|_{M\times \R^\times}=0\times t\f{\p}{\p t},\ \ \
j^!\theta=-\epsilon.
\]
The second identity follows from $\epsilon=0\times_p \E$ and
$-\E\sim_j \Theta$.
\item[(ii)]
Given a section $\tau\in \Gamma(\AA)$ such that the vector field $Y=\a(\tau)$ is tangent to $N$, we can also define
\[ \D(\tau)=\tau\times_\kappa \D(Y)\in \Gamma(\D(\AA))
\]
since $\D(Y)\sim_\kappa Y=\a(\tau)$. This satisfies
\[ \D(\tau)|_{M\times \R^\times}=\tau\times 0,\ \ \ j^!\D(\tau)=\nu(\tau)\]
where $\nu(\tau)=i^!\tau\times_p \nu(Y)$. Note that the map $\tau\mapsto \D(\tau)$ (hence also $\tau\mapsto \nu(\tau)$) is bracket preserving. Indeed,
$\D(\Cour{\tau_1,\tau_2})=\Cour{\D(\tau_1),\D(\tau_2)}$  is obvious over $M\times\R^\times$, hence holds globally by continuity.
\item[(iii)] \label{it:4c}
Let $\sigma\in \Gamma(\AA)$ be an Euler-like isotropic section. Since $j^!(\D(\sigma)+\theta)=\epsilon-\epsilon=0$, the section $\D(\sigma)+\theta$ vanishes along the hypersurface $\nu(M,N)$. This shows that $\D(\sigma)+\theta$ is divisible by $t$, defining a section
\[ w\in\Gamma(\D(\AA))\]
such that $\a(w)=W$ and
\[ w|_{M\times \R^\times}=\f{1}{t}\sigma+\f{\p}{\p t}\in \Gamma(\AA\times \T\R^\times).\]
If $\tau\in \Gamma(\AA)$ is a section such that $\a(\tau)$ is tangent to $N$ and $\Cour{\sigma,\tau}=0$, then $\Cour{w,\D(\tau)}=0$. (This is obvious over $M\times\R^\times$, hence holds true globally by continuity.)
 In particular, this hypothesis is true for $\tau=\sigma$, since $\sigma$ is isotropic:  $\Cour{\sigma,\sigma}=\a^*\d\l\sigma,\sigma\r=0$. Therefore,
 \[ \Cour{w,\D(\sigma)}=0.\]
\end{enumerate}

With these preparations, the proof of the splitting theorem for Courant algebroids proceeds parallel to that for involutive anchored vector bundles:
\begin{proof}[Proof of Theorem \ref{th:split-courant}]
\begin{enumerate}
\item Let $\wt{\a}(w)\in\mf{aut}_{\ca{CA}}(\D(\AA))$ be the infinitesimal Courant automorphism defined by $w$. On $\D(\AA)|_{\D(U,N)}$, its flow is complete (see Section~\ref{subsec:convent}), and gives a
1-parameter family of Courant automorphisms $\wt{\varphi}_s\in\Aut(\D(\AA)|_{\D(U,N)})$, covering the flow $\varphi_s$ of $W$ on the base. Putting $s=-1$, since $\varphi_{-1}$ takes $\pi^{-1}(0)$ to $\pi^{-1}(1)$,
it induces a Courant algebroid isomorphism
\[ \wt{\psi}\colon j_0^!\D(\AA)|_{\D(U,N)}\to j_1^!\D(\AA)|_{\D(U,N)}.\]
After identification $\nu(\AA)=j_0^!(\D(\AA)|_{\D(U,N)})$ and $\AA|_U=j_1^!\D(\AA)|_{\D(U,N)}$,
this is the desired Courant isomorphism
$\wt{\psi}\colon \nu(\AA)\to \AA|_U$, with base map
$\psi$.

\item By the discussion at the beginning of this section, the condition $\Cour{\sigma,\tau}=0$
(which holds true in particular for $\tau=\sigma$, since $\sigma$ is isotropic)
implies that $\Cour{w,\D(\tau)}=0$. Hence, $\D(\tau)$ is invariant under the flow $\wt{\varphi}_s$. Since $j_1^!\D(\tau)=\tau$ while $j_0^!\D(\tau)=\nu(\tau)$, it follows that
$\wt{\psi}$ takes $\nu(\tau)$ to $\tau$.

\item Suppose that $E\subset \AA$ is a Dirac structure, with $\a|_E$ transverse to
$N$, and that the section $\sigma$ takes values in $E$. Then $\D(\sigma)$ is a section of
$\D(E)$. The section $\theta\in \Gamma(\D(\AA))$ takes values in $\D(E)$ as well:
This is clear over $M\times\R^\times$ since $\D(E)|_{M\times \R^\times}=E\times T\R^\times$, while $\theta$ restricts to $0\times t\f{\p}{\p t}$. It follows that $w\in\Gamma(\D(E))\subset \Gamma(\D(\AA))$. Since $\D(E)$ is a Dirac structure,
the vector field $\wt{a}(w)$ is tangent to $\D(E)$, hence its flow $\wt{\varphi}_s$
preserves $\D(E)$. Hence, the Courant algebroid isomorphism $\wt{\varphi}_{-1}$
preserves the Dirac structure, and hence the induced Courant isomorphism $\wt{\psi}$ takes
$j_0^! \D(E)|_{\D(U,N)}=\nu(E)$ to $j_1^! \D(E)|_{\D(U,N)}=E|_U$.
\end{enumerate}
\end{proof}
\begin{remark}
Using the notion of \emph{Courant morphisms} (see e.g. \cite{bur:cou,lib:cou}), one can write $\wt{\psi}$ as a composition
\[
\wt{\psi}=\kappa_!\circ \wt{\varphi}_{-1}\circ j_!,
\]
just as in the case of anchored vector bundles and $L_\infty$-algebroids, see \eqref{eq:LAsplit12} and \eqref{eq:LAsplit1}.
\end{remark}

\subsection{Changing $\sigma$}
The construction of the Courant isomorphism $\wt{\psi}\colon \nu(\AA)\to \AA|_U$ depends on the choice of an Euler-like section $\sigma$. Given a 1-form $\alpha\in\Omega^1(M)$
with $\alpha|_N=0$, we obtain a new section
\[ \sigma'=\sigma + \a^*\alpha\]
which is again Euler-like, with $\a(\sigma)=\a(\sigma')=X$.

\begin{proposition}\label{prop:gauge}
The Courant morphisms $\wt{\psi}',\wt{\psi}$ determined by $\sigma',\sigma$ differ by a gauge transformation:
$\wt{\psi}'=\wt{\psi}\circ R_\omega$, where $\omega\in \Omega^2(\nu(M,N)$ is the exact 2-form
\[ \omega= \d \int_0^1 \f{1}{u} m_u^* \psi^*\alpha\ \d u.\]
Here $m_t\colon \nu(M,N)\to \nu(M,N)$ is scalar multiplication by $t$.
\end{proposition}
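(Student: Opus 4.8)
The plan is to lift everything to the deformation space and compare the two flows there, in the spirit of the proof of Theorem~\ref{th:split-courant}. Let $\sigma'=\sigma+\a^*\alpha$ with $\alpha|_N=0$. Passing to $\D(M,N)$, the section $\D(\alpha)$ makes sense as an element of $\Omega^1(\D(M,N))$ (item in Section~\ref{subsec:def}), and I claim the section $w'=\tfrac1t(\D(\sigma')+\theta)$ of $\D(\AA)$ differs from $w$ by $\a^*$ of a one-form on $\D(M,N)$: indeed $\D(\sigma')=\D(\sigma)+\D(\a^*\alpha)$ and $\D(\a^*\alpha)=\kappa^!\a^*\,\D(\alpha)=\a^*\D(\alpha)$ because $\kappa^!$ is a Courant morphism intertwining anchors; thus $w'=w+\tfrac1t\a^*\D(\alpha)=w+\a^*\beta_\D$ where $\beta_\D=\tfrac1t\D(\alpha)$ is smooth on $\D(M,N)$ since $\D(\alpha)$ vanishes on the zero fiber as a linear form and $\alpha|_N=0$ makes it divisible by $t$ there (verify in local coordinates, Appendix~\ref{app:coord}).

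Now apply Proposition~\ref{prop:gaugerelated} on the manifold $\D(U,N)$ with the Courant algebroid $\D(\AA)|_{\D(U,N)}$: the infinitesimal automorphisms $\wt{\a}(w')$ and $\wt{\a}(w)$ have the same base vector field $W$ and $w'-w=\a^*\beta_\D$, so their time-$s$ flows $\wt{\varphi}'_s,\wt{\varphi}_s$ satisfy
\[ \wt{\varphi}'_s=\wt{\varphi}_s\circ R_{\omega_s},\qquad \omega_s=-\d\!\int_0^s(\varphi_u)^*\beta_\D\,\d u,\]
using the form \eqref{eq:altformula} of the relation (here $\varphi_u$ is the flow of $W$ on $\D(U,N)$). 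Restrict this identity to the fibres over $0$: since $\wt{\psi}'=\kappa_!\circ\wt{\varphi}'_{-1}\circ j_!$ and $\wt{\psi}=\kappa_!\circ\wt{\varphi}_{-1}\circ j_!$, and $\kappa_!$ is the identity on the $t=1$ fibre, pulling back along $j$ gives $\wt{\psi}'=\wt{\psi}\circ R_{j^*\omega_{-1}}$ on $\nu(\AA)=j^!\D(\AA)|_{\D(U,N)}$; gauge transformations by closed $2$-forms are natural under pullback, so $R_{\omega_{-1}}$ restricted to the zero fibre is $R_{j^*\omega_{-1}}$. It remains to identify $j^*\omega_{-1}=j^*\big(\d\int_0^{-1}(\varphi_u)^*\beta_\D\,\d u\big)=\d\int_0^{-1}j^*(\varphi_u)^*\beta_\D\,\d u$ with the stated $2$-form $\omega=\d\int_0^1\tfrac1u m_u^*\psi^*\alpha\,\d u$ on $\nu(M,N)$.

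This last identification is the main work. By Lemma~\ref{lem:later}, $\kappa\circ\varphi_{-t}\circ j=\psi\circ m_t$, so the curve $u\mapsto\kappa\circ\varphi_u\circ j$ from $\nu(M,N)$ to $U$ is (up to sign reparametrization) $u\mapsto\psi\circ m_{-u}$. One then computes $j^*(\varphi_u)^*\beta_\D=j^*(\varphi_u)^*\big(\tfrac1t\D(\alpha)\big)$; since $\pi\circ\varphi_u=\pi-u$ and $\pi=0$ on the image of $j$, one has $t\circ\varphi_u\circ j=-u$, so this equals $-\tfrac1u\,j^*(\varphi_u)^*\D(\alpha)$. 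Finally $\D(\alpha)$ restricts on each fibre $\pi^{-1}(t)\cong M$, $t\neq0$, to $\tfrac1t\alpha$ and on the zero fibre to $\nu(\alpha)$, and pulling $\D(\alpha)$ back through $\varphi_u$ then $j$ and using $\kappa^*\alpha$ on the $M$-factor together with Lemma~\ref{lem:later} yields $j^*(\varphi_u)^*\D(\alpha)=-\tfrac1u\,m_{-u}^*\psi^*\alpha$ — after changing variables $u\mapsto -u$ and keeping track of the sign conventions for the flow (Section~\ref{subsec:convent}), $\int_0^{-1}(\cdots)\d u$ becomes $\int_0^1\tfrac1u m_u^*\psi^*\alpha\,\d u$, giving $\omega$ as claimed. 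I expect the sign bookkeeping in this change of variables, and the precise coordinate check that $\beta_\D=\tfrac1t\D(\alpha)$ is smooth across $t=0$, to be the only delicate points; everything else is formal once Propositions~\ref{prop:gaugerelated} and Lemmas~\ref{lem:W},~\ref{lem:later} are in hand.
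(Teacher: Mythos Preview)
Your overall strategy is exactly the paper's: pass to $\D(\AA)$, compare $w'$ and $w$, apply Proposition~\ref{prop:gaugerelated} in the form \eqref{eq:altformula}, and pull back to the zero fibre using Lemma~\ref{lem:later}. But there is a genuine error in your identification of $w'-w$.

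You write $\D(\a^*\alpha)=\a^*\D(\alpha)$ and conclude $w'=w+\a^*\beta_\D$ with $\beta_\D=\tfrac1t\D(\alpha)$. Check this over $M\times\R^\times$: there $\D(\a^*\alpha)$ is the section $(\a^*\alpha)\times 0$ of $\AA\times\T\R^\times$, while $\D(\alpha)=\tfrac1t\kappa^*\alpha$, so $\a^*_{\D(\AA)}\D(\alpha)=\tfrac1t(\a^*\alpha)\times 0$. These differ by a factor of $t$; the correct relation is $\D(\a^*\alpha)=t\,\a^*\D(\alpha)$, hence
\[
w'=\tfrac1t\big(\D(\sigma)+\theta\big)+\tfrac1t\,\D(\a^*\alpha)=w+\a^*\D(\alpha),
\]
so the $\beta$ to feed into Proposition~\ref{prop:gaugerelated} is $\D(\alpha)$ itself, not $\tfrac1t\D(\alpha)$. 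Your own smoothness worry is the symptom: $\D(\alpha)$ restricts on the zero fibre to the \emph{nonzero} linear form $\nu(\alpha)$, so $\tfrac1t\D(\alpha)$ is genuinely singular at $t=0$. With the extra $\tfrac1u$ that this error introduces, your final integral $\int_0^1\tfrac1{u^2}(\cdots)\,\d u$ diverges.

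Once you drop the spurious $\tfrac1t$, your computation of $j^*\varphi_u^*\D(\alpha)=-\tfrac1u\,m_{-u}^*\psi^*\alpha$ is correct (this is exactly the paper's use of $t\D(\alpha)=\kappa^*\alpha$ and Lemma~\ref{lem:later}), and the change of variables $u\mapsto -u$ then gives $j^*\omega_{-1}=\d\int_0^1\tfrac1u m_u^*\psi^*\alpha\,\d u$ as required.
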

\begin{proof}
Since $\alpha|_N=0$
the form $\f{1}{t}\alpha\in \Omega^1(M\times\R^\times)$ extends to a form $\D(\alpha)$ on the deformation space, with $t \D(\alpha)=\kappa^*\alpha$.  On the other hand,
$\sigma'$ defines a section $w'$ of $\D(\AA)$, given over $M\times\R^\times$ by $w'=\f{1}{t}\sigma'+\f{\p}{\p t}$. We have
\[ w'=w + \a^*(\D(\alpha)),\]
since this identity holds true over $M\times \R^\times$. Let $\wt{\varphi}_s,\ \wt{\varphi}'_s$ be the
families of Courant automorphisms defined by $w,w'$. By Proposition \ref{prop:gaugerelated} (see \eqref{eq:altformula}), we have that
\begin{equation}\label{eq:varpir}
\wt{\varphi}'_s=\wt{\varphi}_s \circ R_{\omega_s},
\end{equation}
where $\omega_s\in \Omega^2(\D(M,N))$ is the family of 2-forms on the deformation space given by
\[\omega_s= - \d \int_0^s \varphi_{u}^*\D(\alpha)\ \d u
= - \d \int_0^s \f{\varphi_{u}^*\kappa^*\alpha}{t-u}\ \d u\]
(recall that $\varphi_s^* t=t-s$).
Using $\kappa\circ \varphi_{u}\circ j=\psi\circ m_{-u}$ (see Lemma~\ref{lem:later}) and $j^*t=0$, this gives
\begin{equation}\label{eq:jomegas}
 j^*\omega_s= - \d \int_0^s \f{m_{-u}^*\psi^*\alpha}{-u}\ \d u
=\d \int_0^{-s} \f{m_{u}^*\psi^*\alpha}{u}\ \d u.\end{equation}
On the other hand, by the definition of $\wt{\psi}$,
and similarly for $\wt{\psi}'$, Equation \eqref{eq:varpir} implies that $\wt{\psi}'=\wt{\psi}\circ R_{j^*\omega_{-1}}$. Hence, putting
$s=-1$ in \eqref{eq:jomegas} proves the proposition.
\end{proof}


\section{Local splitting of Courant algebroids}\label{sec:localsplit}
As a special case of the splitting theorem for Courant algebroids, Theorem \ref{th:split-courant}, one can consider the case that $N$ is a  `small' transversal to a leaf of $\AA$. The resulting Courant algebroid over $N$ may then be regarded as the `transverse Courant algebroid structure'. In this section, we explain how to construct a `linear approximation' to
this transverse structure. Just as in the case of  Poisson structures \cite{wei:loc} or Lie algebroids, one may then pose the problem of  linearizability of this transverse Courant algebroid structure.

\subsection{Local splitting of Courant algebroids}\label{subsec:localsplitting}
The splitting result for Courant algebroids, Theorem \ref{th:split-courant}, implies the following local version:
\begin{corollary}\label{cor:split}
 A Courant algebroid $\AA$ over $M$ is isomorphic, near any given $m\in M$,
 to a product of the standard Courant algebroid over the
 leaf through $m$, and a Courant algebroid over a transverse submanifold
 such that its anchor vanishes at $m$.
\end{corollary}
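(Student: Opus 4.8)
The plan is to derive this from the splitting theorem for Courant algebroids, Theorem~\ref{th:split-courant}, specialized to a small transversal with trivial normal bundle. First I would fix $m\in M$ and let $S$ be the leaf of the singular foliation $\J=\a(\Gamma(\AA))$ through $m$; by the remark following Theorem~\ref{th:split-foliations}, $S$ is a submanifold near $m$ with $T_mS=\a(\AA_m)$. Choose a submanifold $i\colon N\hra M$ with $m\in N$ and $T_mM=T_mN\oplus T_mS$. Since $T_mN+\a(\AA_m)=T_mM$ and transversality of $\a$ to $N$ is an open condition on points of $N$, after shrinking $N$ we may assume that $\a$ is transverse to $N$ everywhere; shrinking further, we may assume that $\nu(M,N)$ is trivial, say $\nu(M,N)\cong N\times\P$, where $\P=T_mS$ is a vector space of dimension $\dim S$.

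Next I would apply Theorem~\ref{th:split-courant}(a): an Euler-like section $\sigma\in\Gamma(\AA)$ for $N$ yields a Courant algebroid isomorphism $\wt{\psi}\colon\nu(\AA)\to\AA|_U$ over a neighbourhood $U$ of $N$, with base the tubular neighbourhood embedding $\psi\colon\nu(M,N)\to U$. By \eqref{eq:simpl}, the trivialization of $\nu(M,N)$ identifies $\nu(\AA)=p^!i^!\AA$ with the product $i^!\AA\times\T\P$. It then remains to recognize the two factors. For the transverse factor, recall $i^!\AA=\a^{-1}(TN)/\a^{-1}(TN)^\perp$; its anchor at $m$ has image $\a\big(\a^{-1}(T_mN)\big)=\a(\AA_m)\cap T_mN=T_mS\cap T_mN=0$, so the anchor of $i^!\AA$ vanishes at $m$, as required. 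For the $\T\P$ factor: since $T_mN$ is complementary to $\a(\AA_m)$, the point $m$ is a leaf of the foliation $i^!\J$ on $N$, hence, by the remark after Theorem~\ref{th:split-foliations}, the fibre $\nu(M,N)_m=\{m\}\times\P$ is a leaf of $p^!i^!\J$; as $\psi$ is foliation-preserving ($\psi^!\J=\nu(\J)$), the set $\psi(\{m\}\times\P)$ is the portion inside $U$ of the leaf $S$ through $m$. Using this identification $\P\cong\psi(\{m\}\times\P)$, the factor $\T\P$ becomes the standard Courant algebroid over a neighbourhood of $m$ in the leaf $S$. Feeding these identifications into $\wt{\psi}$ and restricting to a neighbourhood of $m$ gives the claim.

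The only delicate point is the bookkeeping around the leaf: one must know that $S$ is an embedded submanifold near $m$ of dimension $\dim S=\dim\P$, and that the product decomposition \eqref{eq:simpl} genuinely matches $\T(\text{leaf})$ with $\T\P$ under the foliation-preserving map $\psi$. All of this is already packaged in the splitting theorem for singular foliations and the remark following it; granting that, the corollary is a direct reading of Theorem~\ref{th:split-courant} together with \eqref{eq:simpl}.
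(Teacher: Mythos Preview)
Your proposal is correct and follows the same approach as the paper: choose a small transversal $N$ through $m$ complementary to $\a(\AA_m)$, trivialize $\nu(M,N)\cong N\times\P$ with $\P=\a(\AA_m)$, and read off the local model $i^!\AA\times\T\P$ from \eqref{eq:simpl}. You supply more detail than the paper does---in particular the explicit check that $\a(\a^{-1}(T_mN))=\a(\AA_m)\cap T_mN=0$, and the identification of $\{m\}\times\P$ with a neighbourhood of $m$ in the leaf via the foliation-preserving property of $\psi$---but these are elaborations of the same argument rather than a different route.
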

\begin{proof}
Let $\P=\a(\AA_m)\subset T_mM$, the tangent space to the leaf through $m$, and pick a
submanifold $N$ such that $T_mN$ is a vector space complement
to $\P$ in $T_mM$.  Taking $N$ to be sufficiently small, the normal bundle is trivial, and, following \eqref{eq:simpl}, we get the local model $i^!\AA\times \T\P$, where $i^!\AA$ is a Courant algebroid over $N$ with vanishing anchor at $m$.
\end{proof}
Given any leaf $\mathcal{O}$ of the Courant algebroid there is a well-defined germ of a \emph{transverse Courant algebroid structure}; that is, the induced Courant algebroid structures on transversals through  any two points of $\mathcal{\O}$ are locally isomorphic.
See \cite[Section 3.8]{bur:spl} for details in the case of Lie algebroids; the argument carries over to Courant algebroids with straightforward modifications.

By the splitting result in Corollary \ref{cor:split}, the local study of Courant algebroids can be reduced to the study of Courant algebroids around critical points. Note that if $\AA$
has a critical point at $m\in M$, then $\dd=\AA_m$  is a Courant algebroid over a point, i.e. a metrized  Lie algebra. For $\sigma\in \Gamma(\AA)$, the vector field $\a(\sigma)$
vanishes at $m\in M$, and has a linear approximation $\nu(\a(\sigma))\in\mf{X}(T_mM)$ depending only on $\zeta=\sigma|_m$. Viewing
the linear vector field on $T_mM$ as an endomorphism of $T_mM$, this defines a
$\dd$-representation on $T_mM$. It turns out that this $\dd$-action has co-isotropic stabilizers, and the resulting action Courant algebroid
\[ T_mM\times \dd\]
is the \emph{linear approximation of $\AA$ near $m$}, in a sense to be made precise in the
following section. We say that a Courant algebroid $\AA$ with $\a_m=0$ is \emph{linearizable} at $m$ if it is isomorphic to the Courant algebroid $T_mM\times \dd$ near $m$. Finding conditions for linearizability is the analog for Courant algebroids of the  ``linearization problem'' previously considered in the contexts of Poisson structures and Lie algebroids \cite{wei:loc,con:norsm,cra:geo,mon:lev}. Some linearizability results for Courant algebroids have been obtained by the third author in 
\cite{lim:loc}.


\subsection{Linear approximation of transverse Courant algebroid structure}
Suppose that $\AA\to M$ is a Courant algebroid, and $S\subset M$ is a submanifold
such that $\a(\AA|_S)\subset TS$. That is, $S$ intersects each leaf $\mathcal{O}$ of $\AA$ in an open subset of $\mathcal{O}$. (We use the letter $S$ to avoid confusion with the submanifold $N$ considered earlier, which is transverse to the leaves.) Then $\AA|_S$ is again a Courant algebroid.
The normal bundle $\nu(\AA,\AA|_S)\to \nu(M,S)$ is simply the pullback (as a vector bundle) of $\AA|_S$ under the base projection $\nu(M,S)\to S$. It hence inherits a metric
by pull-back from $\AA|_S$. The map $\a\colon \AA\to TM$ takes $\AA|_S$ to $TS$, hence it defines an anchor map
\[ \nu(\a)\colon \nu(\AA,\AA|_S)\to \nu(TM,TS)\cong T\nu(M,S).\]
\begin{proposition} The vector bundle
\[ \nu(\AA,\AA|_S)\to \nu(M,S)\]
has a  Courant algebroid structure, with the metric and anchor as described above, and
with the unique Courant bracket such that the pull-back map $\Gamma(\AA|_S)\to
\Gamma(\nu(\AA,\AA|_S))$ is bracket preserving. Given a Dirac structure $E\subset \AA$,
the subbundle $\nu(E,E|_S)\subset \nu(\AA,\AA|_S)$ is a Dirac structure.
\end{proposition}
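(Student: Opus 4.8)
The plan is to run the deformation-space machinery of Section~\ref{sec:euler} once more, now with $S$ playing the role of a union of (open pieces of) leaves rather than a transversal. Form the deformation space $\D(M,S)$. Since the bundle projection $\AA\to M$ is a morphism of pairs $(\AA,\AA|_S)\to(M,S)$, functoriality of $\D$ produces a map $\D(\AA,\AA|_S)\to\D(M,S)$, and this is again a vector bundle — routine to check in the coordinates of Appendix~\ref{app:coord}, where a local trivialization $\AA\cong V\times\R^k$ with $\AA|_S\cong(V\cap S)\times\R^k$ gives $\D(\AA,\AA|_S)\cong\D(V,V\cap S)\times\R^k$. By construction its restriction to $M\times\R^\times$ is the constant family $\AA\times\R^\times$, and its restriction to the zero fibre $\pi^{-1}(0)=\nu(M,S)$ is the bundle $\nu(\AA,\AA|_S)\to\nu(M,S)$ of the statement (which, as noted there, is the pull-back $p^{*}(\AA|_S)$). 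The goal is then to put a Courant algebroid structure on $\D(\AA,\AA|_S)\to\D(M,S)$ and restrict it to $\pi^{-1}(0)$.

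The metric and the bracket extend across $t=0$ for free, whereas the anchor is the one point where the hypothesis $\a(\AA|_S)\subseteq TS$ is used. Fix a local frame $e_1,\dots,e_k$ of $\AA$, inducing a frame $\wt e_1,\dots,\wt e_k$ of $\D(\AA,\AA|_S)$ which restricts to $p^{*}(e_a|_S)$ at $t=0$. In coordinates $(x,\wt y,t)$ on $\D(M,S)$ adapted to $S=\{y=0\}$, with $y=t\wt y$, the fibre metric $\langle\wt e_a,\wt e_b\rangle=g_{ab}(x,t\wt y)$ and the structure functions $\Cour{\wt e_a,\wt e_b}=\sum_c\Gamma_{ab}^{c}(x,t\wt y)\,\wt e_c$ of $\AA$ involve no division by $t$, hence extend smoothly across $t=0$. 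The anchor $\a(e_a)$ has, by the hypothesis, normal component vanishing along $S=\{y=0\}$, so after the rescaling $y=t\wt y$ the factor $t^{-1}$ coming from $\p_{y^{j}}=t^{-1}\p_{\wt y^{j}}$ is absorbed and $\a$ extends to a smooth bundle map $\D(\a)\colon\D(\AA,\AA|_S)\to\D(TM,TS)\cong T\D(M,S)$ covering $\kappa$; invariantly, this is $\D$ applied to the morphism of pairs $\a\colon(\AA,\AA|_S)\to(TM,TS)$. One then defines the Courant bracket of arbitrary local sections by the Dorfman formula assembled from the $\Cour{\wt e_a,\wt e_b}$, the metric and $\D(\a)$; it is frame-independent, and the Courant axioms, being tensorial identities, hold on the dense open set $M\times\R^\times$ and therefore everywhere. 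At $t=0$ the metric restricts to the pull-back of that of $\AA|_S$ and $\D(\a)$ restricts to $\nu(\a)$, as required.

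Finally one restricts to the zero fibre and checks uniqueness and the Dirac statement. Because $\D(\a)$ annihilates the function $t$ (equivalently $\D(\a)^{*}\d t=0$ — true over $M\times\R^\times$, hence everywhere), the sections of $\D(\AA,\AA|_S)$ vanishing along $\pi^{-1}(0)$ form a two-sided bracket ideal (use \eqref{eq:dercou} and the symmetry axiom together with divisibility by $t$), so the Courant structure descends to $\D(\AA,\AA|_S)|_{\pi^{-1}(0)}=\nu(\AA,\AA|_S)\to\nu(M,S)$ — a ``restriction to a hypersurface'', complementary to the transverse pull-backs $j^{!}$ used in the earlier sections. By construction the map $\tau\mapsto p^{*}\tau$ satisfies $\Cour{p^{*}\tau_1,p^{*}\tau_2}=p^{*}\Cour{\tau_1,\tau_2}$, and uniqueness of a bracket with this property is automatic, since the $p^{*}\tau$ generate $\Gamma(\nu(\AA,\AA|_S))$ over $C^\infty(\nu(M,S))$ and \eqref{eq:dercou} with the symmetry axiom then fix the bracket of all sections. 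For a Dirac structure $E\subset\AA$ one has $\a(E|_S)\subseteq\a(\AA|_S)\subseteq TS$, so $E|_S\subset\AA|_S$ is again a Dirac structure and $\D(E,E|_S)\subset\D(\AA,\AA|_S)$ is defined; it has half the rank, and is isotropic and bracket-closed over $M\times\R^\times$, hence everywhere by continuity, so it is a Dirac structure whose restriction to $\pi^{-1}(0)$ is $\nu(E,E|_S)$. I expect the only genuinely delicate step to be the smoothness of $\D(\AA,\AA|_S)$ and of its anchor across the zero fibre — precisely the point at which $\a(\AA|_S)\subseteq TS$ is needed; everything afterwards follows from the principle that a tensorial identity valid on a dense open set is valid everywhere.
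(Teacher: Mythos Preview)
Your argument is correct and follows essentially the same route as the paper: build $\D(\AA,\AA|_S)\to\D(M,S)$, show that the metric, anchor and bracket extend smoothly across $t=0$ (the anchor being the one place where $\a(\AA|_S)\subset TS$ is used), verify the Courant axioms by continuity from the dense open $M\times\R^\times$, and then restrict to the zero fibre. The only cosmetic difference is that the paper works invariantly with the global sections $\D(\sigma)$ and the identity $\Cour{\D(\sigma_1),\D(\sigma_2)}=\D(\Cour{\sigma_1,\sigma_2})$, whereas you compute in local frames and adapted coordinates; also note that $\D(TM,TS)$ is only $\ker(T\pi)\subset T\D(M,S)$, not all of $T\D(M,S)$, and $\D(\a)$ covers the identity on $\D(M,S)$ rather than $\kappa$.
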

\begin{proof}
It is convenient to describe the Courant algebroid structure on $\nu(\AA,\AA|_S)$ via the deformation space $\D(\AA,\AA|_S)$. The map $\a\colon (\AA,\AA|_S)\to (TM,TS)$
defines an anchor map
\[
\D(\a)\colon \D(\AA,\AA|_S)\to \D(TM,TS)\subset T\D(M,S),
\]
where we identify $\D(TM,TS)$ with the subbundle $\ker(T\pi)$ of $T\D(M,S)$.
The metric on $\AA$, regarded as a fiberwise quadratic function $f\colon \AA\to \R,\ v\mapsto \l v,v\r$, defines a function on $\D(\AA,\AA|_S)$ which is again fiberwise quadratic.   We want to show that the obvious Courant algebroid structure on $\D(\AA,\AA|_S)|_{M\times \R^\times}=\AA\times 0_{\R^\times}$ extends to the full deformation space. Note that for all $\sigma\in \Gamma(\AA)$, the section
$\sigma\times 0\in \Gamma(\AA\times 0_{\R^\times})$ extends to a section
$\D(\sigma)$ of $\D(\AA,\AA|_S)$, defined by viewing $\sigma$ as a map of pairs $(M,S)\to (\AA,\AA|_S)$. Sections of this type generate the   entire space of sections as a module over $C^\infty(\D(M,S))$. Hence, to show that the Courant bracket of $\AA\times 0_{\R^\times}$ extends to all of $\D(\AA,\AA|_S)$, it suffices to show that the bracket of
any two sections of this type extends. But this is immediate since $\Cour{\D(\sigma_1),\D(\sigma_2)}:=\D(\Cour{\sigma_1,\sigma_2})$ provides the extension of $\Cour{\sigma_1\times 0,\sigma_2\times 0}
=\Cour{\sigma_1,\sigma_2}\times 0$. Since the anchor of $\D(\AA,\AA|_S)$ is tangent
to $\nu(M,S)\subset \D(M,S)$, this Courant algebroid structure restricts to $\nu(\AA,\AA|_S)$.
The restriction of the anchor and metric to $\nu(\AA,\AA|_S)$ are as described above.
Furthermore, since $\D(\sigma)|_{\nu(M,S)}$ is just the pull-back of $\sigma|_S\in \Gamma(\AA|_S)$ to $\nu(\AA,\AA|_S)$, the Courant bracket on sections of $\nu(\AA,\AA|_S)$ is such that the pullback of sections of $\AA|_S$ is bracket preserving.
\end{proof}

\begin{remark}
The map $\varrho\colon \Gamma(\AA|_S)\to \mf{X}(\nu(M,S))$, given by the composition
\[ \Gamma(\AA|_S)\to \Gamma(\nu(\AA,\AA|_S))\to \mf{X}(\nu(M,S))\]
of the pull-back followed by the anchor,
defines a \emph{Courant algebroid action} of $\AA|_S$ on
$\nu(M,S)\to S$, in the sense of \cite[Section 2.3]{lib:cou}, and it identifies $\nu(\AA,\AA|_S)$ as the corresponding \emph{action Courant algebroid}.
\end{remark}

In particular the proposition gives a `linear Courant algebroid'  over the
normal bundle to any leaf $S=\O\subset M$.  As a special case, if  $m\in M$ is a point at which the anchor of $\AA$ is zero, then the Courant algebroid $\AA|_S=\AA_m$ is a metrized  Lie algebra,
$\dd=\AA_m$, and we obtain a Courant algebroid structure on
\[ \nu(\AA,\AA_m)=T_mM\times \dd\to T_mM,\]
with the property that the pull-back map
$\dd\to \Gamma(T_mM\times \dd)$ intertwines the metrics and the Courant brackets.
By the theory of \cite{lib:cou}, this  means that $T_mM\times \dd$ is an \emph{action Courant algebroid} for a suitable $\dd$-action $\varrho$ on $T_mM$ with coisotopic stabilizers.
To describe the action, we need to specify the anchor. Note that the section $\nu(\sigma)\in \Gamma(T_mM\times \dd)$ obtained by restriction of $\D(\sigma)$ is just the `constant section' given by $\zeta=\sigma|_m$. We have
\[ \a(\zeta)=\a(\nu(\sigma))=\nu(\a(\sigma))\in\mf{X}(T_mM),\]
the linear approximation of the vector field $\a(\sigma)$. But a linear
vector field on a vector space is given by an endomorphism. Concretely, if
$Y$ is a vector field on $M$ with critical point at $m$, and $\nu(Y)$ is its linear approximation, then the  endomorphism of $T_mM$ corresponding to $\nu(Y)$ is
\[ v\mapsto [Y,Z]|_m\]
where $Z\in \mf{X}(M)$ is any vector field extending $v$.

\section{Lie bialgebroids}\label{sec:splitliebialgebroid}
\subsection{Basic definitions}
The notion of a  Lie bialgebroid was introduced by Mackenzie and Xu in \cite{mac:lie}.
A Lie bialgebroid $(E,F,\beta)$ over $M$ consists of
two Lie algebroids $E\to M,\ F\to M$, with a nondegenerate pairing,
\[ \beta\colon E\times_M F\to \R\]
(identifying $E=F^*$) such that the Lie algebroid differential $\d_F$ on $\Gamma(\wedge E)=\Gamma(\wedge F^*)$ is a derivation for the Schouten bracket of $E$. The notion turns out to be symmetric: If $(E,F,\beta)$ is a Lie bialgebroid then so is $(F,E,\beta)$. By \cite[Proposition 3.6]{mac:lie} (see also
\cite[Theorem 3.9]{lib:cou}), any Lie bialgebroid determines a Poisson bivector field $\pi\in \mf{X}^2(M)$, with associated bundle map $\pi^\sharp\colon T^*M\to TM$ given by composition
\begin{equation}\label{eq:poisson}
 T^*M\stackrel{\a_E^*}{\lra} E^*\stackrel{\cong}{\lra}F
\stackrel{\a_F}{\lra} TM,
\end{equation}
where $\a_E\colon E\to TM,\ \a_F\colon F\to TM$ are the anchor maps. In particular, $(\a_F\circ \a_E^*)^*=\a_E\circ \a_F^* = - \a_F\circ \a_E^*$.

Conversely, every Poisson manifold $(M,\pi)$ determines a Lie bialgebroid $(TM,\,T^*_\pi M,\, \beta)$,
where $T^*_\pi M$ is the cotangent algebroid  defined by the Poisson structure and $\beta$ is the canonical pairing; the bivector field associated to this Lie bialgebroid is $\pi$ itself.

\subsection{Statement of the normal form theorem}
Suppose  $N\subset  M$ is a submanifold transverse to the map $\pi^\sharp\colon T^*M\to TM$.
Then $N$ is also transverse to the anchor maps $\a_E,\a_F$ of $E,F$, because $\ran(\pi^\sharp)\subset \ran(\a_E)\cap \ran(\a_F)$ by definition of $\pi$. We obtain linear approximations $\nu(E)=p^!i^!E,\ \nu(F)=p^!i^!F$ over $\nu(M,N)$ as before, with a pairing
\[ \nu(\beta)\colon \nu(E)\times_{\nu(M,N)}\nu(F)\to \R\]
given by $\nu(\beta)(x,y)= \beta(i_!p_!(x),\ i_!p_!(y))$.
This pairing is \emph{ always degenerate} (if $\dim N<\dim M$), since the map $i_!p_!\colon \nu(E)\to E$ has a kernel. In the normal form result below, this will be corrected through a suitable 2-form
$\omega\in \Omega^2(\nu(M,N))$, by adding a term $\omega(\a_{\nu(E)}(x),\ \a_{\nu(F)}(y))$ to the right-hand side. We will denote this modified pairing by $\nu(\beta)^\omega$.

By Lemma~\ref{EL-ex}, it is always possible to choose a 1-form $\alpha\in \Omega^1(M)$ which is Euler-like for the Poisson structure $\pi$. The choice of such a 1-form determines a normal form:

\begin{theorem}[Normal form theorem for Lie bialgebroids]\label{th:split-liebialgebroids}
Let $(E,F,\beta)$ be a Lie bialgebroid, with associated Poisson structure $\pi$, and let $N\subset M$ be a submanifold transverse to the map $\pi^\sharp$. Choose a 1-form $\alpha\in \Omega^1(M)$, with $\alpha|_N=0$, such that $X=\pi^\sharp(\alpha)$ is Euler-like, determining a
 tubular neighborhood embedding $\psi\colon \nu(M,N)\to U\subset M$. Define the 2-form
\begin{equation}\label{eq:omega1}
\omega= \d \int_0^1 \f{1}{u} m_u^*\psi^*\alpha\ \d u \in \Omega^2(\nu(M,N)).\end{equation}
Then the pairing $\nu(\beta)^\omega$ is non-degenerate, and $\psi$ lifts to
an isomorphism of Lie bialgebroids
\[ (\nu(E),\nu(F),\nu(\beta)^\omega)\to (E|_U,F|_U,\beta).\]
\end{theorem}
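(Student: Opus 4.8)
## Proof strategy

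The plan is to run the same deformation-space argument used for Courant algebroids (Theorem \ref{th:split-courant}) and for Lie algebroids, but now carried out \emph{simultaneously} for the pair $(E,F)$, using that a Lie bialgebroid is equivalent to a Courant algebroid with two transverse Dirac structures. Concretely, I would form the \emph{Drinfeld double} $\AA = E \oplus F$, which is a Courant algebroid over $M$ with $E$ and $F$ as complementary Dirac structures, and with anchor $\a_\AA = \a_E \oplus \a_F$; the Poisson bivector $\pi$ is recovered from $\pi^\sharp = \a_F \circ \a_E^*$ as in \eqref{eq:poisson}. Transversality of $N$ to $\pi^\sharp$ gives transversality to $\a_E$, $\a_F$, hence to $\a_\AA$, so the construction of $\D(\AA) = \kappa^! \AA$ over $\D(M,N)$ goes through, and $E,F$ lift to Dirac structures $\D(E),\D(F) \subset \D(\AA)$.

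The key point is the choice of Euler-like section. I would \emph{not} pick an isotropic Euler-like section of $\AA$; instead I would take $\sigma = \a_E^*(\alpha) \in \Gamma(E) \subset \Gamma(\AA)$, where $\alpha$ is the chosen $1$-form with $\a_\AA(\sigma) = \pi^\sharp(\alpha) = X$ Euler-like and $\sigma|_N = 0$. This $\sigma$ is a section of the Dirac structure $E$ but \emph{not} of $F$ (and not isotropic), so the flow of $\wt{\a}(w)$ on $\D(\AA)$ preserves $\D(E)$ but does \emph{not} preserve $\D(F)$ — and it is exactly this failure that produces the correcting $2$-form. Following Proposition \ref{prop:gauge} and its proof: the section $w = \tfrac1t(\D(\sigma)+\theta)$ differs from the "neutral" choice (e.g. an isotropic $w_0$ built from $\a_\AA^*\D(\alpha)$) in a way controlled by $\a_\AA^*(\D(\alpha))$, so Proposition \ref{prop:gaugerelated}/\eqref{eq:altformula} expresses $\wt{\varphi}_s = (\wt{\varphi}_0)_s \circ R_{\omega_s}$ with $\omega_s$ the pulled-back exact $2$-form on $\D(M,N)$; restricting to the zero fibre via Lemma \ref{lem:later} ($\kappa \circ \varphi_u \circ j = \psi \circ m_{-u}$) and setting $s=-1$ yields precisely $\omega = \d\int_0^1 \tfrac1u m_u^*\psi^*\alpha\,\d u$ of \eqref{eq:omega1}, $j^*\omega_{-1} = \omega$. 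The resulting Courant isomorphism $\wt\psi \colon \nu(\AA) \to \AA|_U$ (a composition $\kappa_! \circ \wt\varphi_{-1} \circ j_!$) carries $\nu(E) = p^!i^!E$ to $E|_U$ (since $\sigma \in \Gamma(E)$, by part (3) of Theorem \ref{th:split-courant}), while it carries $\nu(F)$ to $E$'s complement twisted by the gauge transformation $R_\omega$, i.e. $\wt\psi$ intertwines $F|_U$ with $R_{-\omega}(\nu(F)) = \{ (\a_{\nu(F)}(y),\, y) : y \in \nu(F)\}$ described by the modified pairing $\nu(\beta)^\omega$. Non-degeneracy of $\nu(\beta)^\omega$ is then automatic: it is the pairing for which $R_{-\omega}(\nu(F))$ and $\nu(E)$ are complementary Lagrangians, which they are because their images under the Courant isomorphism are $E|_U$ and $F|_U$.

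It remains to translate the Courant-double statement back into the language of Lie bialgebroids: a pair of complementary Dirac structures in a Courant algebroid is the same data as a Lie bialgebroid (Liu--Weinstein--Xu / \cite{lib:cou}), the isomorphism of Courant algebroids carrying one Dirac pair to the other is exactly an isomorphism of Lie bialgebroids, and the bivector associated to $(\nu(E),\nu(F),\nu(\beta)^\omega)$ matches $\nu(\pi)$ on the nose (both are read off from $\a_{\nu(F)}\circ \a_{\nu(E)}^*$ after the $\omega$-correction, which is the deformation-space linear approximation of $\pi$). I expect the main obstacle to be \textbf{bookkeeping the metric/pairing sign conventions and the direction of the gauge transformation}: one must be careful that $\omega$ corrects the degeneracy of $\nu(\beta)$ rather than making it worse, i.e. that the Lagrangian $F|_U$ pulls back to the graph-type Dirac structure cut out by $+\omega$ and not $-\omega$, and that the Schouten-derivation compatibility $\d_{\nu(F)}$-vs-Schouten-bracket-of-$\nu(E)$ is preserved — though the latter is free once the Courant-algebroid isomorphism and the Dirac-pair correspondence are in place. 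A secondary technical point, relegated to Appendix \ref{app:Moser}, is the Moser-type argument needed to see that the explicit $\omega$ of \eqref{eq:omega1} is the \emph{correct} primitive, i.e. that it is exactly the $2$-form produced by the gauge-transformation computation above; this is the Lie-bialgebroid analogue of the Moser lemma and is where the hypothesis $\alpha|_N=0$ (ensuring $\tfrac1t\alpha$ extends to $\D(\alpha)$) is used.
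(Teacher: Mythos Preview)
Your overall framework---pass to the Drinfeld double $\AA=E\oplus F$, lift to the deformation space, and use Proposition~\ref{prop:gauge} to produce the correcting 2-form---is exactly the paper's approach. But the mechanism you describe for tracking $F$ has a gap, and two side remarks are off.

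First, the formula $\sigma=\a_E^*(\alpha)$ lands in $\Gamma(E^*)\cong\Gamma(F)$, not $\Gamma(E)$; the section of $E$ with anchor $X=\pi^\sharp(\alpha)$ is $\sigma=-\a_F^*\alpha$. More importantly, your proposed comparison ``$w$ vs.\ an isotropic $w_0$'' does not do what you need: an isotropic Euler-like section gives a Courant isomorphism, but there is no reason its flow preserves $\D(F)$, so comparing to it tells you nothing about where $\nu(F)$ ends up. The paper instead uses \emph{two} Euler-like sections, one in each Dirac structure:
\[
\sigma=-\a_F^*\alpha\in\Gamma(E),\qquad \tau=\a_E^*\alpha\in\Gamma(F),\qquad \tau-\sigma=\a^*\alpha.
\]
Then $\sigma$ produces $\wt\psi$ taking $\nu(E)\to E|_U$ (by Theorem~\ref{th:split-courant}(c)), while $\tau$ produces $\wt\psi'$ taking $\nu(F)\to F|_U$, and Proposition~\ref{prop:gauge} gives $\wt\psi'=\wt\psi\circ R_\omega$ with the $\omega$ of~\eqref{eq:omega1}. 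Hence $F|_U=\wt\psi'(\nu(F))=\wt\psi(R_\omega(\nu(F)))$, which is the statement. Non-degeneracy of $\nu(\beta)^\omega$ follows, as you say, because $\nu(E)$ and $R_\omega(\nu(F))$ are mapped to transverse Lagrangians.

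Second, the Moser lemma of Appendix~\ref{app:Moser} is \emph{not} used in the proof of this theorem; the formula for $\omega$ comes straight out of Proposition~\ref{prop:gauge} (and ultimately Lemma~\ref{lem:later}). The Moser argument is only invoked later, in Corollary~\ref{cor:liebisplit}, to replace the $\omega$ of~\eqref{eq:omega1} by the constant symplectic form $\omega_S$ on a small transversal.
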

As in the case of Courant algebroids, one also obtains a $G$-equivariant version of this result (for proper $G$-actions lifting to bialgebroid automorphisms).

The proof will be given in the following subsection. \medskip

\subsection{Proof of the normal form theorem}\label{subsec:prnormal}
By \cite{liu:ma}, the direct sum $\AA=E\oplus F$ acquires the structure of a Courant algebroid containing $E,F$ as Dirac structures; conversely, if $E,F\to M$ are transverse Dirac structures inside a Courant algebroid $\AA\to M$, then
$(E,F,\beta)$ is a Lie bialgebroid, with $\beta$ obtained by
restriction of the bilinear form  of $\AA$. One calls $(\AA,E,F)$ a \emph{Manin triple}. We shall adopt the viewpoint of defining Lie bialgebroids $(E,F,\beta)$ in terms of such Manin triples. For the Lie bialgebroid $(TM,\,T^*_\pi M,\beta)$ of a Poisson manifold, the corresponding Manin triple is
$(\T M,\,TM,\,\on{Gr}(\pi))$, where $\on{Gr}(\pi)$ is the graph of the map $\pi^\sharp\colon T^*M\to TM$.

The statement of Theorem \ref{th:split-liebialgebroids} is equivalent to the following assertion:

\begin{theorem}[Normal form theorem for Manin triples]
\label{th:split-manintriples}
 Let $(\AA,E,F)$ be a Manin triple over $M$,  with associated Poisson structure $\pi$,
and suppose $N\subset M$ is transverse to $\pi^\sharp$. Choose $\alpha\in \Omega^1(M)$
such that $\alpha|_N=0$ and $X=\pi^\sharp(\alpha)$ is Euler-like, defining a tubular neighborhood embedding $\psi\colon \nu(M,N)\to U\subset M$ and the 2-form $\omega$ (see \eqref{eq:omega1}). Then the Dirac structures $\nu(E)$ and $R_\omega(\nu(F))$ are transverse in $\nu(\AA)$, and $\psi$ lifts to an isomorphism of Manin triples
 \[ (\nu(\AA),\,\nu(E),\,R_\omega(\nu(F)))\to (\AA|_U,\,E|_U,\,F|_U).\]
\end{theorem}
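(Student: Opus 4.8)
The plan is to run the same deformation-space machinery used in the proof of Theorem~\ref{th:split-courant}, but now tracking how the Euler-like \emph{section} of the Courant algebroid $\AA = E\oplus F$ interacts with the two Dirac structures. The Euler-like vector field $X = \pi^\sharp(\alpha)$ lifts, via the isotropic splitting $T^*M \xrightarrow{\a_E^*} E \hookrightarrow \AA$, to a section of $\AA$; but the natural choice of such a section is \emph{not} a section of $F$, so we cannot directly apply part (3) of Theorem~\ref{th:split-courant} to $F$. The key observation is that $X = \pi^\sharp(\alpha) = \a_F(\a_E^*\alpha)$, so $\sigma_E := \a_E^*\alpha \in \Gamma(E)$ and $\sigma_F := -\a_E^*\alpha \in \Gamma(F)$ \emph{both} lift $X$ (up to sign — recall $\a_F\circ\a_E^* = -\a_E\circ\a_F^*$, so one must be careful with which section lifts $X$ versus $-X$; concretely the section $\a_E^*\alpha$ has $\AA$-anchor equal to $\a_E(\a_E^*\alpha) + \a_F(\a_E^*\alpha)$, and one checks this equals $X$ using $\a_E\a_E^* = 0$). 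So I would take $\sigma = \a_E^*\alpha$, which is isotropic (since $\langle \a^*\beta_1,\a^*\beta_2\rangle = 0$ for the split metric on $E\oplus F$, as $E,F$ are Lagrangian), and which lies in $E$. Part (3) of Theorem~\ref{th:split-courant} then immediately gives that $\wt\psi\colon \nu(\AA)\to\AA|_U$ carries $\nu(E)$ to $E|_U$.

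The heart of the argument is controlling the image of $\nu(F)$. Since $\sigma = \a_E^*\alpha$ is a section of $E$, it does \emph{not} satisfy $\Cour{\sigma,\tau}=0$ for $\tau\in\Gamma(F)$ in general, so $\D(F)$ is not $\wt\varphi_s$-invariant. The remedy is to compare with the section $\sigma' = \a^*\alpha \in \Gamma(\AA)$, where $\a = \a_\AA$; note $\sigma' = \a_E^*\alpha + \a_F^*\alpha$, and $\a_F^*\alpha$ is a section of $E^* = F$... wait, more carefully: $\a^* = \a_E^* \oplus \a_F^*$ under $\AA^* \cong \AA$, with $\a_E^*\colon T^*M\to F$ (dualizing $\a_E\colon E\to TM$ using $E^*\cong F$) and $\a_F^*\colon T^*M \to E$. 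So $\a^*\alpha$ has a component in $E$ and a component in $F$, and by Lemma~\ref{lem:alpha}, $\Cour{\a^*\alpha,\cdot\,} = -D_{d\alpha}$, which is purely a gauge action. I would use Proposition~\ref{prop:gaugerelated}/Proposition~\ref{prop:gauge}: the two Euler-like sections of $\AA$ lifting $X$, namely the $E$-section $\a_E^*\alpha$ (call its induced isomorphism $\wt\psi_E$, preserving $\nu(E)$) and the $F$-section $\a_F^*\alpha$ (inducing $\wt\psi_F$, preserving $\nu(F)$), differ by $\a^*\alpha$, hence by Proposition~\ref{prop:gauge} the corresponding isomorphisms differ by the gauge transformation $R_\omega$ with exactly the 2-form $\omega$ of \eqref{eq:omega1}. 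Therefore $\wt\psi_F = \wt\psi_E \circ R_{\pm\omega}$ (sign to be pinned down), and since $\wt\psi_F$ carries $\nu(F)$ to $F|_U$, we get that $\wt\psi_E$ carries $R_{\mp\omega}(\nu(F))$ to $F|_U$. With $\wt\psi := \wt\psi_E$, this is precisely the claim that $\wt\psi$ takes the Manin triple $(\nu(\AA),\nu(E),R_\omega(\nu(F)))$ to $(\AA|_U, E|_U, F|_U)$ — after absorbing the sign into the convention for $\omega$, which should match \eqref{eq:omega1} once one tracks the sign in $\a_F\a_E^* = -\a_E\a_F^*$ through Proposition~\ref{prop:gaugerelated}.

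Transversality of $\nu(E)$ and $R_\omega(\nu(F))$ in $\nu(\AA)$ then follows formally: $\wt\psi$ is an isomorphism of Courant algebroids carrying these to $E|_U$ and $F|_U$, which are transverse by hypothesis (they form a Manin triple). Finally, to conclude the \emph{bialgebroid} statement of Theorem~\ref{th:split-liebialgebroids}, one unwinds the Manin-triple dictionary: $R_\omega(\nu(F))$ is the graph of the map $\nu(F)\to\nu(E)^*=\nu(F)$ twisted by $\omega$ via the anchors, which is exactly the statement that the pairing $\nu(\beta)$ gets corrected to $\nu(\beta)^\omega$, and non-degeneracy of $\nu(\beta)^\omega$ is equivalent to transversality of $\nu(E)$ with $R_\omega(\nu(F))$.

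\textbf{Main obstacle.} The genuinely delicate point is the sign bookkeeping: the relation $\a_F\circ\a_E^* = -\a_E\circ\a_F^*$ means the $E$-lift and the $F$-lift of the Euler-like field $X = \pi^\sharp(\alpha)$ are related not just by $\sigma \mapsto -\sigma$ but by a specific combination, and one must verify that feeding the correct difference 1-form into Proposition~\ref{prop:gaugerelated} (or directly Proposition~\ref{prop:gauge}) reproduces the 2-form $\omega$ of \eqref{eq:omega1} \emph{with the right sign}, so that it is $R_\omega(\nu(F))$ and not $R_{-\omega}(\nu(F))$ that appears. A secondary technical point, which should be routine given the earlier sections, is checking that $\a_E^*\alpha$ is genuinely Euler-like as a section of $\AA$ — i.e.\ that it vanishes along $N$ (clear, since $\alpha|_N=0$) and has linear approximation the Euler section $\epsilon$ of $\nu(\AA)$ — and likewise for $\a_F^*\alpha$, using $\a\circ\a^* = 0$ to see that the $E\oplus F$-anchor of $\a_E^*\alpha$ is $\a_F(\a_E^*\alpha) = \pi^\sharp(\alpha) = X$.
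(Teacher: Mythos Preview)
Your approach is exactly the paper's: produce two Euler-like sections of $\AA$, one in each Dirac structure, differing by $\a^*\alpha$, then invoke Proposition~\ref{prop:gauge}. The only issue is the labeling, which you flag but never resolve and in fact get backwards: since $\a_E\colon E\to TM$ dualizes to $\a_E^*\colon T^*M\to E^*\cong F$, the section $\a_E^*\alpha$ lies in $F$, not in $E$ (as you note in your ``wait, more carefully'' aside, but then immediately contradict by calling it ``the $E$-section''), and likewise $\a_F^*\alpha\in E$. The clean choice is $\sigma=-\a_F^*\alpha\in\Gamma(E)$ and $\tau=\a_E^*\alpha\in\Gamma(F)$; the minus sign in $\sigma$ is forced by $\a_E\circ\a_F^*=-\a_F\circ\a_E^*=-\pi^\sharp$, so that $\a(\sigma)=\a(\tau)=X$, and then $\tau-\sigma=\a_E^*\alpha+\a_F^*\alpha=\a^*\alpha$ on the nose. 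Feeding this into Proposition~\ref{prop:gauge} (with $\sigma'=\tau$) gives $\wt\psi'=\wt\psi\circ R_\omega$ with precisely the $\omega$ of~\eqref{eq:omega1} and no residual sign, whence $F|_U=\wt\psi'(\nu(F))=\wt\psi(R_\omega(\nu(F)))$.
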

\begin{proof}
Define sections
\[ \sigma= - \a_F^*\alpha\in \Gamma(F^*)=\Gamma(E),\ \ \ \
\tau= \a_E^*\alpha\in\Gamma(E^*)=\Gamma(F).\]
Then $\a(\sigma)=\a(\tau)=X$,
and
$\tau - \sigma =\a^*\alpha$.
The section $\sigma$ determines a Courant morphism $\wt{\psi}\colon \nu(\AA)\to \AA|_U$ taking $\nu(E)$ to $E|_U$, while $\tau$ determines a Courant morphism
$\wt{\psi}'\colon \nu(\AA)\to \AA|_U$ taking $\nu(F)$ to $F|_U$. By Proposition \ref{prop:gauge}, these are related by a gauge transformation by the 2-form $\omega$:
\[ \wt{\psi}'=\wt{\psi}\circ R_\omega.\]
It follows that
\[ F|_U=\wt{\psi}'(\nu(F))=\wt{\psi}(R_\omega(\nu(F))).\qedhere\]
%
\end{proof}


%
\begin{remark}
The normal form theorem for Manin triples, as stated in Theorem~\ref{th:split-manintriples},   admits a
generalization to Dirac structures $E,F\subset \AA$ that are not necessarily transverse, but satisfy the weaker condition $E\cap\, F\cap\, \ker(\a)=0$. In terms of
the  canonical Courant morphism \cite{lib:cou}
$S\colon \T M \da \AA\times \bar{\AA}$, where $v+\mu\sim_S (x,y)$ if and only if $v=\a(x)$ and $\a^*\mu=x-y$, this condition means that for all
$x\in E,\ y\in F$, one has  that
$v+\mu\sim_S (x,y)$ if and only if $v+\mu=0$.
As a consequence, the backward image $L=S^!(E\times F)\subset \T M$
is a \emph{Dirac structure}, which is a Poisson structure if and only if $E$ is transverse to $F$.
By construction,
any section $X+\alpha\in\Gamma(L)$
is $S$-related to a unique section
 of $E\times F$ along the diagonal.
 But this is just the
 same as a section $(\sigma,\tau)\in \Gamma(E\oplus F)$, where $(\sigma(m),\tau(m))\in E_m\oplus F_m$
is the unique element that is $S$-related to $(X_m+\alpha_m)$. By definition
of $S$, these sections satisfy
\[ \a(\sigma)=\a(\tau)=X,\ \ \
\sigma-\tau=\a^*\alpha.
\]
Given a submanifold $N$ transverse to the anchor of $L$, we may choose an Euler-like section
$X+\alpha\in \Gamma(L)$ with respect to $N$. Proceeding  as in the
transverse case, we obtain a normal form for $(E,F)$.
\end{remark}


\subsection{Local splitting theorem for Lie bialgebroids}
The assumption that $N$ is transverse to $\pi^\sharp$ holds in particular if $N$ is  \emph{cosymplectic} \cite{wei:loc}. Recall that a submanifold $N$ of a Poisson manifold $(M,\pi)$ is cosymplectic  if and only if the composition
\begin{equation}\label{eq:cosympl} \nu(M,N)^*\to T^*M|_N\stackrel{\pi^\sharp}{\lra} TM|_N\to \nu(M,N)
\end{equation}
is an isomorphism. This implies that $TN\subset TM|_N$ has a canonical complement $\pi^\sharp(\on{ann}(TN))$, which we may identify with $\nu(M,N)$,  giving a direct sum decomposition
\begin{equation}\label{eq:dirsum}
TM|_N=TN\oplus \nu(M,N).
\end{equation}
\begin{lemma} Let $(E,F,\beta)$ be a Lie bialgebroid over $M$, and suppose $N\subset M$ is cosymplectic for the associated Poisson structure $\pi$. Then $\beta$ restricts to a non-degenerate pairing $i^!\beta$ between $i^!E\subset E,\ i^!F\subset F$, defining a pull-back Lie bialgebroid $(i^!E,i^!F,i^!\beta)$ over $N$.  \end{lemma}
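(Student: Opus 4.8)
The plan is to deduce this from the Manin triple picture of Section~\ref{subsec:prnormal}. Write $\AA=E\oplus F$ for the double, so that $(\AA,E,F)$ is a Manin triple, $E,F$ are transverse Dirac structures, and $\beta$ is the restriction to $E\times_M F$ of the metric of $\AA$ (for which $E,F$ are Lagrangian). A cosymplectic $N$ is transverse to $\pi^\sharp$, hence to each of $\a_E$, $\a_F$ and $\a_\AA=\a_E+\a_F$; consequently the subbundles $i^!E=\a_E^{-1}(TN)$ and $i^!F=\a_F^{-1}(TN)$ carry their pullback Lie algebroid structures, and the pullback Courant algebroid $i^!\AA=\a_\AA^{-1}(TN)/\a_\AA^{-1}(TN)^{\perp}$ over $N$ is defined, containing $i^!E$ and $i^!F$ as pullback Dirac structures whose induced Lie algebroid structures (Section~\ref{subsec:pull}) coincide with the pullback ones.

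The heart of the matter is to show that, thanks to the cosymplectic hypothesis,
\[ i^!\AA = i^!E\oplus i^!F, \]
with $i^!E,i^!F$ the pullback Dirac structures and with the metric of $i^!\AA$ equal to the one determined by the restriction $i^!\beta$ of $\beta$. This is a fibrewise computation. One first records that $\a_\AA^{-1}(TN)^{\perp}=\a_\AA^{*}(\ann(TN))$, and that for $\mu\in\ann(TN)$ the element $\a_\AA^{*}\mu\in\AA$ has $E$-component $\a_F^{*}\mu$ and $F$-component $\a_E^{*}\mu$, with $\a_E(\a_F^{*}\mu)=-\pi^\sharp\mu$ and $\a_F(\a_E^{*}\mu)=\pi^\sharp\mu$ (using $\pi^\sharp=\a_F\circ\a_E^{*}$ and $\a_E\circ\a_F^{*}=-\pi^\sharp$). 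Both the injectivity of the natural maps $i^!E\to i^!\AA$ and $i^!F\to i^!\AA$ (restrictions of the quotient $\a_\AA^{-1}(TN)\to i^!\AA$), and the transversality of their images, then reduce to the assertion that a covector $\mu\in\ann(T_nN)$ with $\pi^\sharp(\mu)\in T_nN$ must vanish --- which is precisely injectivity of the composition in \eqref{eq:cosympl}, i.e.\ the cosymplectic condition. Since the pullback Dirac structures $i^!E,i^!F$ are Lagrangian in $i^!\AA$, injectivity together with transversality forces the direct sum decomposition by a dimension count; and $\langle[a],[b]\rangle_{i^!\AA}=\beta(a,b)$ for $a\in\Gamma(i^!E)$, $b\in\Gamma(i^!F)$ identifies the metric of $i^!\AA$ with the one built from $i^!\beta$.

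Granting this, the lemma follows at once. Non-degeneracy of $i^!\beta$ is automatic, being the pairing component of the non-degenerate metric of the Courant algebroid $i^!\AA$ under $i^!\AA=i^!E\oplus i^!F$. And $(i^!\AA,i^!E,i^!F)$ is then a Manin triple over $N$, so by the correspondence recalled at the start of Section~\ref{subsec:prnormal} (from \cite{liu:ma}) the triple $(i^!E,i^!F,i^!\beta)$ is a Lie bialgebroid, with its two Lie algebroid factors the pullback Lie algebroids of $E$ and $F$; this is the asserted pullback Lie bialgebroid.

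The one genuinely delicate point is the fibrewise linear algebra of the second paragraph: one must track the metric identifications $\AA^{*}\cong\AA$ and $E^{*}\cong F$ carefully in order to verify $\a_\AA^{-1}(TN)^{\perp}=\a_\AA^{*}(\ann(TN))$ and the component description of $\a_\AA^{*}\mu$, and it is exactly there that the cosymplectic hypothesis enters --- for $N$ merely transverse to $\pi^\sharp$ the pairing $i^!\beta$ is in general degenerate, so some strengthening of transversality is unavoidable. Everything else is a routine application of the pullback constructions for Courant algebroids and Dirac structures recalled in Sections~\ref{subsec:pull} and~\ref{subsec:prnormal}.
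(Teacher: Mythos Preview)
Your argument is correct, and it is a genuinely different route from the paper's proof. The paper works directly with the bundles $E|_N$ and $F|_N$: it exhibits explicit complements $E'=\a_F^*(\ann(TN))$ to $\a_E^{-1}(TN)\subset E|_N$ and $F'=\a_E^*(\ann(TN))$ to $\a_F^{-1}(TN)\subset F|_N$, observes that $E'$ is orthogonal to $\a_F^{-1}(TN)$ under $\beta$, and concludes non-degeneracy of $i^!\beta$ by elementary linear algebra on the direct sum. Your proof instead passes through the Courant algebroid $i^!\AA$ and shows that $(i^!\AA,i^!E,i^!F)$ is a Manin triple, then invokes the Liu--Weinstein--Xu correspondence. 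The computational core is essentially the same---both arguments hinge on the fact that for $\mu\in\ann(TN)$ the cosymplectic condition forces $\pi^\sharp\mu\in TN$ to imply $\mu=0$, and both implicitly use the subspaces $\a_E^*(\ann(TN))$, $\a_F^*(\ann(TN))$---but the packaging differs. The paper's version is shorter and entirely self-contained in the bialgebroid language; your version has the advantage of making the Lie bialgebroid axioms for $(i^!E,i^!F,i^!\beta)$ a formal consequence of the Manin triple picture rather than something left implicit, and it meshes cleanly with the viewpoint of Section~\ref{subsec:prnormal}.
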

\begin{proof} The image $\a_F^*(\on{ann}(TN))\subset F^*|_N\cong E|_N$ is a complement $E'$ to $\a_E^{-1}(TN)\subset E|_N$, since $\a_E(E')=\pi^\sharp(\on{ann}(TN))=\nu(M,N)$. A similar argument applies to $F$. This defines splittings
\[
E|_N=\a_E^{-1}(TN)\oplus E',\ \ \ \  F|_N=\a_F^{-1}(TN)\oplus F'.
\]
Since the metric defines a non-degenerate pairing between $E$ and $F$, and since $E'$ is orthogonal to $\a_F^{-1}(TN)$, it follows that the metric defines a non-degenerate pairing between $\a_E^{-1}(TN)$ and $\a_F^{-1}(TN)$. Therefore, the pairing between $i^!E$ and $i^!F$ is non-degenerate as well.  \end{proof}
Given a Lie bialgebroid $(E,F,\beta)$ with associated Poisson structure $\pi$, and any point $m\in M$, let $S\subset M$ be the symplectic leaf through $m$, and $i\colon N\subset M$ a transverse submanifold of complementary dimension. Taking $N$ smaller if necessary, it is a cosymplectic submanifold, hence the \emph{transverse Lie bialgebroid} $(i^!E,i^!F,i^!\beta)$ over $N$ is defined.  Letting $\omega_S$ denote the symplectic form on $S$, we can also consider the Lie bialgebroid  $(TS,TS,\omega_S)$. Theorem \ref{th:split-liebialgebroids} (or the equivalent  Theorem \ref{th:split-manintriples})
has the following consequence:
\begin{corollary}\label{cor:liebisplit}  A Lie bialgebroid $(E,F,\beta)$ over $M$ is isomorphic, near any given point $m\in M$,  to a product of the standard Lie bialgebroid $(TS,TS,\omega_S)$ of the symplectic leaf through  $S$ and the transverse  Lie bialgebroid $(i^!E,i^!F,i^!\beta)$ over $N$. \end{corollary}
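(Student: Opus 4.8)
The plan is to derive Corollary~\ref{cor:liebisplit} from the normal form theorem for Lie bialgebroids (Theorem~\ref{th:split-liebialgebroids}, equivalently its Manin triple form Theorem~\ref{th:split-manintriples}), together with the cosymplectic geometry of a small transversal and the Moser lemma for Lie bialgebroids of Appendix~\ref{app:Moser}. First, given $m\in M$, let $S$ be the symplectic leaf through $m$ and choose $i\colon N\hra M$ with $m\in N$ and $T_mM=T_mN\oplus T_mS$. Shrinking $N$, it becomes cosymplectic for the associated Poisson structure $\pi$, hence transverse to $\pi^\sharp$, so the transverse Lie bialgebroid $(i^!E,i^!F,i^!\beta)$ over $N$ is defined; and by Lemma~\ref{EL-ex} applied to $(T^*M,\pi^\sharp)$ we may choose $\alpha\in\Omega^1(M)$ with $\alpha|_N=0$ and $X=\pi^\sharp(\alpha)$ Euler-like. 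Theorem~\ref{th:split-liebialgebroids} then identifies $(E,F,\beta)$, near $m$, with the model $(\nu(E),\nu(F),\nu(\beta)^{\omega})$ over $\nu(M,N)$, with $\omega$ the $2$-form \eqref{eq:omega1}. The whole problem is thus to recognise this model as a product.

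Next, shrinking $N$ once more so that $\nu(M,N)=N\times\P$ with $\P$ an open neighbourhood of the origin in $T_mS$, and writing $p\colon N\times\P\to N$ for the projection, the description of pullbacks under submersions (Section~\ref{subsec:pullLinfty}, and its Courant-algebroid counterpart \eqref{eq:simpl}) gives $\nu(\AA)=i^!\AA\times\T\P$, $\nu(E)=i^!E\times T\P$, $\nu(F)=i^!F\times T\P$ as Dirac structures, while the uncorrected pairing is $\nu(\beta)=i^!\beta\oplus 0$, since $i_!p_!$ annihilates the $T\P$-directions. Hence, if $\omega$ were the pullback $\varpi:=\pr_\P^*\,\omega_\P$ of a constant symplectic form $\omega_\P$ on $\P$, then $\nu(\beta)^{\varpi}$ would be the product pairing sending $(x_1,X),(y_1,Y)$ to $i^!\beta(x_1,y_1)+\omega_\P(X,Y)$, so that $(\nu(E),\nu(F),\nu(\beta)^{\varpi})$ would be precisely the product of $(i^!E,i^!F,i^!\beta)$ with $(T\P,T\P,\omega_\P)$. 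Since $(T\P,T\P,\omega_\P)\cong(TS_0,TS_0,\omega_S)$ for a neighbourhood $S_0$ of $m$ in $S$ by Darboux's theorem, this product is the asserted local model.

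It remains to pass from the correction form $\omega$ of \eqref{eq:omega1} to the product form $\varpi$ through Lie bialgebroid isomorphisms. Passing to the Manin triple picture of Section~\ref{subsec:prnormal}, in which $\nu(\beta)^{\omega}$ and $\nu(\beta)^{\varpi}$ correspond to $(\nu(\AA),\nu(E),R_\omega(\nu(F)))$ and $(\nu(\AA),\nu(E),R_\varpi(\nu(F)))$, one should first check, from the explicit formula \eqref{eq:omega1} together with $\psi|_N=\on{id}$ and $\nu(\psi)=\on{id}$, that after a fibrewise-over-$N$ identification of the normal bundle (a parametrised Darboux argument, equivalently the Weinstein splitting of $\pi$ at the cosymplectic transversal) $\omega$ agrees along $N$, in the $T\P$-directions, with a fixed constant symplectic form $\omega_\P$; so that the closed $2$-form $\eta=\omega-\varpi$ vanishes in the $T\P$-directions along $N$. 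Then, over a neighbourhood $U'$ of $N$ that is uniform in $t\in[0,1]$, the straight-line family $\omega_t=\varpi+t\eta$ consists of closed $2$-forms with $\nu(\beta)^{\omega_t}$ non-degenerate, i.e.\ with $R_{\omega_t}(\nu(F))$ transverse to $\nu(E)$ over $U'$, and the Moser lemma for Lie bialgebroids (Appendix~\ref{app:Moser}) yields a smooth family of isomorphisms $(\nu(E),\nu(F),\nu(\beta)^{\omega_0})\to(\nu(E),\nu(F),\nu(\beta)^{\omega_t})$; at $t=1$, composing with the isomorphism of Theorem~\ref{th:split-liebialgebroids} and with the identification of the preceding paragraph completes the proof. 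The $G$-equivariant statement follows with all choices made $G$-invariant (Lemma~\ref{EL-ex}) and an equivariant version of the Moser lemma.

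I expect the main obstacle to be the third step: pinning down the correction $2$-form $\omega$ near $N$ precisely enough to guarantee that the interpolating forms $\omega_t$ keep $\nu(E)$ and $R_{\omega_t}(\nu(F))$ transverse on a fixed neighbourhood of $N$ — this is exactly where the cosymplectic hypothesis enters, via a parametrised Darboux / Poisson-splitting argument — and then invoking the Moser lemma of Appendix~\ref{app:Moser} in the appropriate form. By contrast, the reduction to normal form, the triviality of the normal bundle, and the product formulas for $\nu(\AA),\nu(E),\nu(F),\nu(\beta)$ are routine.
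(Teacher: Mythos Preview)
Your plan is correct and follows essentially the same route as the paper: apply the normal form theorem to pass to $(\nu(\AA),\nu(E),R_\omega(\nu(F)))$, trivialize $\nu(M,N)\cong N\times\P$ so that the model becomes $(i^!\AA\times\T\P,\ i^!E\times T\P,\ R_\omega(i^!F\times T\P))$, and then use a linear interpolation between $\omega$ and the pullback of the leafwise symplectic form together with the Moser lemma for Manin triples (Appendix~\ref{app:Moser}) to replace $\omega$ by the product form. The only tactical difference is that the paper controls $\omega$ by choosing $\alpha$ suitably (citing \cite[Lemma~6.2]{bur:spl}) so that $\omega$ and $\omega_S$ agree at the single point $m$, and then uses openness of transversality near $m$ rather than along all of $N$; this is simpler than your parametrised Darboux step and already suffices for the local statement.
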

\begin{proof}
We will work with the interpretation as a Manin triple
$(\AA,E,F)$ with $\AA=E\oplus F$, as in
Theorem \ref{th:split-manintriples}.  By Theorem \ref{th:split-manintriples}, around $m$ there is an isomorphism of Manin triples $(\AA,E,F)\to (\nu(\AA),\nu(E),\nu(F)^\omega)$, where $\omega\in \Omega^2(\nu(M,N))$ is the closed 2-form \eqref{eq:omega1}. One can arrange (by suitably choosing $\alpha$ in \eqref{eq:omega1})  that $\omega|_N$ has kernel $TN$, and its restriction to $\nu(M,N)\subset T\nu(M,N)|_N$ coincides with the symplectic form on $\nu(M,N)\subset TM|_N$ induced by $\pi|_N$;
see  \cite[Lemma 6.2]{bur:spl}.

Let $\mathsf{P}=T_mS\subset T_mM$. Taking $N$ sufficiently small,  we may choose an isomorphism $\nu(M,N)\cong N\times \mathsf{P}$, extending the given identification
$\nu(M,N)_m\cong \mathsf{P}$ at $m$. On $N\times \mathsf{P}$, we have the closed 2-form $\omega_S$ (viewed as a 2-form on the product via pullback
from the second factor), but also the 2-form  $\omega$ (by pullback from $\nu(M,N)$). These coincide at the point $m=(m,0)\in N\times \mathsf{P}$. Define a family of closed 2-forms by linear interpolation, $\omega_s=(1-s)\omega+s\omega_S$. Then the restriction of
$\omega_s$ to the tangent space at $m$ is constant; in particular, there is a neighborhood of
$m$ on which $\nu(F)^{\omega_s}$ remains transverse to $\nu(E)$. The Moser method for Manin triples, proven in Appendix~\ref{app:Moser},
defines a Courant automorphism of $\nu(\AA)=i^!\AA\times \T S$ near $m$, preserving $\nu(E)=i^!E\times TS$, and taking
$\nu(F)^{\omega}$ to $\nu(F)^{\omega_S}=i^!F\times TS^{\omega_S}$.
\end{proof}

In the special case that the Poisson bivector field $\pi$ is itself non-degenerate, defining a symplectic structure $\omega$, we may take $N=\pt$, $S=M$. In this case, $i^!E=\g,\ i^!F=\h$ is a Manin pair
$(\g,\h,\beta_0)$ of Lie algebras, with a non-degenerate pairing $\beta_0$ obtained by restriction of $\beta$. We may regard $(TM,TM,\omega)$ as a Lie bialgebroid, and the corollary says that $(E,F,\beta)$ is isomorphic, near $m$, to the product of Lie bialgebroids
\[ (\g,\h,\beta_0)\times (TM,TM,\omega).\]

Note also that the Weinstein splitting theorem \cite{wei:loc} for Poisson manifolds $(M,\pi)$ is a special case
of Corollary \ref{cor:liebisplit}, by taking $(E,F,\beta)$ to be the Lie bialgebroid $(TM,\,T^*_\pi M,\beta)$.

\appendix

\section{Coordinates on the deformation space}\label{app:coord}
In this appendix, we will give a few more details regarding the construction of the deformation space (normal cone) $\D(M,N)=\nu(M,N)\sqcup (M\times \R^\times)$ of a manifold $M$ and submanifold $N$. References include \cite{hig:eul,hig:tan,kas:ma}.

For any function $f\in C^\infty(M)$ with $f|_N=0$, define a function $\wt{f}\colon \D(M,N)\to \R$, given by $\wt{f}(m,t)=\f{1}{t}f(m)$ on $M\times \R^\times$ and by
$TM|_N/TN\to \R,\ v\mod TN\mapsto v(f)$  on $\nu(M,N)$. The manifold structure of
$\D(M,N)$ requires that the function $\wt{f}$ is smooth. We may use these functions
to construct charts of $\D(M,N)$, as follows. Let $x_1,\ldots,x_r,y_1,\ldots,y_s\in C^\infty(U)$ be the local coordinates on a submanifold chart $U\subset M$ for $(M,N)$, that is, $U\cap N$ is given by the vanishing of the $y$-coordinates.
The deformation space $\D(U,U\cap N)\subset \D(M,N)$ is then the domain of a
chart for the deformation space, with coordinates $x_1,\ldots,x_r,\wt{y}_1,\ldots,\wt{y}_s,t$. Here $t$ is the projection $\pi\colon \D(M,N)\to \R$, regarded as a real-valued function, $\wt{y}_j$ are obtained from
$y_j$ by the procedure described above, and the $x_i$ are now viewed as functions
on $U\times \R^\times$ (via projection to the first factor)
respectively on $\nu(U,U\cap N)$ (via projection to the base $U\cap N$). One verifies
(as in the references above) that the transition maps for any two such charts are smooth.

A vector field $X$ is Euler-like if and only if it has the local coordinate expression
\[ X=\sum_i a_i(x,y) \f{\p}{\p x_i}+\sum_j \big(y_j+b_j(x,y)\big)\f{\p}{\p y_j},\]
where $a_i$ vanishes for $y=0$, while $b_j$ vanishes to second order. Hence
\[ \D(X)=\sum_i a_i(x, t\wt{y}) \f{\p}{\p x_i}+\sum_j \big(\wt{y}_j+\f{1}{t} b_j(x,t\wt{y})\big)\f{\p}{\p \wt{y}_j}.\]
On the other hand, the vector field $\Theta$ is given in local $(x,y,t)$ coordinates  on $M\times \R^\times$ by $\f{\p}{\p t}$. In terms of
$(x,\wt{y},t)$ coordinates this becomes
\[ \Theta=t\f{\p}{\p t}-\sum \wt{y}_i\f{\p}{\p \wt{y_i}}.\]
Hence, for $W=\f{1}{t}(\Theta+X)$ we find
\[ W=\f{\p}{\p t}+\f{1}{t}\sum_i a_i(x, t\wt{y})\f{\p}{\p x_i}+\f{1}{t^2}\sum_j b_j(x,t\wt{y}) \f{\p}{\p \wt{y}_j}.\]
This confirms that $W$ is well-defined even for $t=0$.
For a general vector field $Y=\sum_i c_i(x,y)\f{\p}{\p x_i}+\sum_j d_j(x,y)\f{\p}{\p y_j}$,
the vector field $Y\times 0$ on $M\times \R^\times$ has the coordinate expression
\[ Y\times 0=\sum_i c_i(x, t\wt{y}) \f{\p}{\p x_i}+\f{1}{t} \sum_j d_j(x,t\wt{y})\f{\p}{\p \wt{y}_j}.\]
We see again that $t(Y\times 0)$ extends to a vector field $\wh{Y}$ on the deformation space, while $Y\times 0$ extends to a vector field $\D(Y)$
if and only if  $Y$ is tangent to $N$.

In a similar fashion, for a $k$-form $\alpha=
\sum_{|I|+|J|=k} f_{IJ}(x,y)\d x^I \d y^J$ (using multi-index notation), the form $t^{-1}(\alpha\times 0)$ on $M\times \R^\times$ extends to a $k$-form $\D(\alpha)$ on the deformation space (i.e., $\kappa^*\alpha$ is divisible by $t$) if $\alpha|_N=0$. 
In this case, the pullback
of $\D(\alpha)$ to $\nu(M,N)$ is the linear approximation $\nu(\alpha)$.

\section{The Moser method for Manin triples}\label{app:Moser}
Let $\AA\to M$ be a Courant algebroid. Let $\sigma_s,\tau_s\in \Gamma(\AA)$ be 1-parameter families of sections, with $\tau_s=\sigma_s+\a^*\beta_s$ for a family of 1-forms $\beta_s\in \Omega^1(M)$,
and let $\wt{\psi}_s,\wt{\phi}_s$ be the (local) flows on $\AA$ generated by these sections. Their base flows coincide, and are given by the flow of the time dependent vector field $X_s=\a(\sigma_s)=\a(\tau_s)$.
Proposition \ref{prop:gaugerelated} also applies to time-dependent sections, with the same proof,
and shows that
\begin{equation}\label{eq:betaeqn}
\wt{\psi}_s=R_{\varpi_s}\circ \wt{\phi}_s,\ \ \ \
 \varpi_s=-\d \int_0^s (\phi_u)_*\beta_u \ \d u.\end{equation}
We will use this to prove:
\begin{theorem}[Moser method for Manin triples]
Let $\AA\to M$ be a Courant algebroid with a Dirac structure $E\subset \AA$. Let
$F_s,\ s\in\R$ be a family of Dirac structures transverse to $E$, depending smoothly on $s$,
and which are gauge equivalent in the sense that
\[ F_s={R}_{\nu_s}(F_0)\]
for a smooth family of closed 2-forms $\nu_s\in \Om^2(M)$, with $\nu_0=0$.
Denote by $\pi_s\in\mf{X}^2(M)$ the resulting family of Poisson structures. Suppose furthermore that
$\f{d}{d s}\nu_s=-\d\alpha_s$ for a smooth family of 1-forms
$\alpha_s$, and such that the vector field $\pi_s^\sharp(\alpha_s)$ is complete. Then there is a family of Courant automorphisms $\wt{\phi}_s$ such that
\[ \wt{\phi}_s(E)=E,\ \ \ \wt{\phi}_s(F_s)=F_0.\]
\end{theorem}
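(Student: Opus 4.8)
The plan is to run a Moser-type argument whose generator is an \emph{$E$-valued} time-dependent section built from $\alpha_s$. For each $s$ the transversality of $F_s$ to $E$ makes $(\AA,E,F_s)$ a Manin triple, and I set
\[
\sigma_s=-\a_{F_s}^*\alpha_s\in\Gamma(E),\qquad \tau_s=\a_E^*\alpha_s\in\Gamma(F_s),
\]
so that $\tau_s-\sigma_s=\a^*\alpha_s$ and, by \eqref{eq:poisson}, $\a(\sigma_s)=\a(\tau_s)=\pi_s^\sharp(\alpha_s)$, which is complete by hypothesis. Let $\wt{\phi}_s$ be the time-dependent flow of the inner infinitesimal Courant automorphism $\wt{\a}(\sigma_s)$ (notation as in \eqref{eq:alift}). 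Since $\a(\sigma_s)$ is complete and $\wt{\a}(\sigma_s)$ is homogeneous of degree $0$, this flow is defined for all $s\in\R$ (Section~\ref{subsec:convent}); and since $\sigma_s\in\Gamma(E)$ with $E$ a Dirac structure, $\Cour{\sigma_s,\Gamma(E)}\subset\Gamma(E)$, so $\wt{\a}(\sigma_s)$ is tangent to $E$ and hence $\wt{\phi}_s(E)=E$ for all $s$ (exactly as in the proof of Theorem~\ref{th:split-courant}(3)). Thus the first conclusion, and the completeness of $\wt{\phi}_s$, come for free; the content is the identity $\wt{\phi}_s(F_s)=F_0$.

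For that I would show that the family of Dirac structures $s\mapsto(\wt{\phi}_s)_*(F_s)$ is constant, equal to $F_0$ at $s=0$. Differentiating at $s_0$ and writing $\wt{\phi}_s=\wt{\phi}_{s_0}\circ\Lambda_{s_0,s}$, where $\Lambda_{s_0,s}$ is the flow of $\wt{\a}(\sigma_\bullet)$ from time $s_0$ to $s$, the derivative is $(\wt{\phi}_{s_0})_*$ applied to the sum of the infinitesimal effect of $\wt{\a}(\sigma_{s_0})$ on $F_{s_0}$ and the velocity $\tfrac{d}{ds}\big|_{s_0}F_s$; so it suffices to see that these two cancel. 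To compute $\tfrac{d}{ds}\big|_{s_0}F_s$, use $F_s=R_{\nu_s}(F_0)$ together with $R_a\circ R_b=R_{a+b}$ (valid since $\a\circ\a^*=0$ forces $D_a\circ D_b=0$): the velocity of $F_s$ at $s_0$ is the infinitesimal action on $F_{s_0}$ of the generator $D_{\dot\nu_{s_0}}=D_{-d\alpha_{s_0}}$, which by Lemma~\ref{lem:alpha} equals $\Cour{\a^*\alpha_{s_0},\cdot}$. Decomposing $\a^*\alpha_{s_0}=-\sigma_{s_0}+\tau_{s_0}$ relative to $\AA=E\oplus F_{s_0}$, and using that $\tau_{s_0}\in\Gamma(F_{s_0})$ so that $\Cour{\tau_{s_0},\cdot}$ is tangent to the Dirac structure $F_{s_0}$ and does not move it, one finds that the velocity of $F_s$ at $s_0$ is the infinitesimal effect of $-\Cour{\sigma_{s_0},\cdot}=-\wt{\a}(\sigma_{s_0})$ --- precisely the required cancellation. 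The bookkeeping relating the flows of $\sigma_s$ and of $\tau_s$, and the gauge term it produces, is supplied by Proposition~\ref{prop:gaugerelated} and \eqref{eq:betaeqn}. Hence $(\wt{\phi}_s)_*(F_s)\equiv F_0$, which together with $\wt{\phi}_s(E)=E$ finishes the proof.

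The step I expect to be the main obstacle is making this Moser differentiation precise for a \emph{time-dependent} flow, and giving a clean coordinate-free meaning to ``the velocity of a smooth family of Dirac structures'' (a section of $\Hom(F_s,\AA/F_s)$ subject to the Lie-algebroid compatibility), so that the pointwise cancellation above can be invoked rigorously; the algebra itself is short, but the analytic set-up requires care. A secondary, purely bookkeeping, difficulty is fixing the signs in the splitting $\a^*\alpha_s=-\sigma_s+\tau_s$ relative to the conventions of \eqref{eq:poisson} and Lemma~\ref{lem:alpha}, and checking that all the $F_s$ remain transverse to $E$ (guaranteed by hypothesis) so that $\sigma_s$ is defined throughout. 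It is worth emphasising in the write-up that the only genuinely new feature compared with the classical Moser argument for Poisson structures is that the generator is taken in $\Gamma(E)$, which is exactly what forces $\wt{\phi}_s(E)=E$.
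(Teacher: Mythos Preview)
Your setup is exactly the paper's: the same generator $\sigma_s\in\Gamma(E)$ (the $E$-component of $-\a^*\alpha_s$ in the splitting $\AA=E\oplus F_s$), the same flow $\wt{\phi}_s$, the same reason $\wt{\phi}_s(E)=E$. The only difference is in how you establish $\wt{\phi}_s(F_s)=F_0$, and here the paper's route is worth knowing because it dissolves precisely the obstacle you flag.

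Rather than differentiating the family of Lagrangians $(\wt{\phi}_s)_*F_s$ and making sense of its velocity, the paper stays at the level of sections and flows. Observe that your $\tau_s=\sigma_s+\a^*\alpha_s$ lies in $F_s=R_{\nu_s}(F_0)$; applying $R_{-\nu_s}$ gives a section $\sigma_s+\a^*\beta_s$ of $F_0$, with $\beta_s=\alpha_s-\iota_{X_s}\nu_s$. Its time-dependent flow $\wt{\psi}_s$ therefore preserves $F_0$. Now compare $\wt{\psi}_s$ and $\wt{\phi}_s$ via the time-dependent version of Proposition~\ref{prop:gaugerelated} (equation~\eqref{eq:betaeqn}): $\wt{\psi}_s=R_{\varpi_s}\circ\wt{\phi}_s$ with $-\tfrac{d}{ds}\varpi_s=(\phi_s)_*\d\beta_s$. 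A one-line Cartan computation, using $\d\alpha_s=-\dot\nu_s$ and $\d\nu_s=0$, gives $(\phi_s)_*\d\beta_s=-\tfrac{d}{ds}\big((\phi_s)_*\nu_s\big)$, so $\varpi_s=(\phi_s)_*\nu_s$. Then
\[
F_0=\wt{\psi}_s(F_0)=R_{(\phi_s)_*\nu_s}\circ\wt{\phi}_s(F_0)=\wt{\phi}_s\circ R_{\nu_s}(F_0)=\wt{\phi}_s(F_s),
\]
which is the desired identity. This is the same cancellation you describe (your $\tau_s$-part is exactly what gets absorbed), but packaged so that no ``derivative of a family of Dirac structures'' ever appears: everything happens inside the algebra of sections, flows, and the gauge formula you already invoke. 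Your infinitesimal argument is correct in spirit, but if you want to avoid setting up the deformation-of-subbundles formalism, this is the shortcut.
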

In practical applications (as in the proof of Cor.~\ref{cor:liebisplit}), the completeness assumptions are ensured by multiplying with suitable bump functions.
\begin{proof}
(See \cite{me:poilec} for a similar argument in the context of Poisson geometry.)
Let $\sigma_s \in \Gamma(E)$ be the component of $-\a^*(\alpha_s)\in \Gamma(\AA)$ under the decomposition $\AA=E\oplus F_s$. By assumption, the time dependent vector field $X_s=\a(\sigma_s)=\pi^\sharp_s(\alpha_s)$ is complete, defining a flow $\phi_s$ on $M$, and
hence the time dependent section $\sigma_s$ generates a flow $\wt{\phi}_s$.
 Since $\sigma_s$ are sections of $E$, we have that $\wt{\phi}_s(E)=E$. To prove that this flow takes $F_s$ to $F_0$, note first that
\[ \sigma_s+\a^*(\alpha_s)\]
is a family of sections of $F_s={R}_{\nu_s}(F_0)$. Applying ${R}_{-\nu_s}$, we obtain a section of
$F_0$. But
\begin{align*}
 {R}_{-\nu_s}(\sigma_s+\a^*(\alpha_s))&= \sigma_s+\a^*(\alpha_s) -\a^*\circ  \nu_s^\flat \circ \a
(\sigma_s+\a^*(\alpha_s))\\
&= \sigma_s+\a^*(\alpha_s-\nu_s^\flat (X_s))\\
&=\sigma_s+\a^*\beta_s
\end{align*}
with the family of 1-forms $\beta_s=\alpha_s-\iota_{X_s} \nu_s\in \Omega^1(M)$.
The flow of the family of sections $\sigma_s+\a^*\beta_s$ is $\wt{\psi}_s=R_{\varpi_s}\circ \wt{\phi}_s$, with
$\varpi_s$ obtained by integrating
\[ (\phi_s)_*\d \beta_s=-\f{d}{d s}\varpi_s,\]
see \eqref{eq:betaeqn}. But
\[ (\phi_s)_*\d\beta_s=(\phi_s)_*(\d\alpha_s-\L_{X_s}\nu_s)=-\f{d}{d s}((\phi_s)_*\nu_s)\]
shows that $\varpi_s=(\phi_s)_*\nu_s$. Since $\sigma_s+\a^*\beta_s$ are sections of $F_0$, the flow
$\wt{\psi}_s$ preserves $F_0$. We conclude that
\[
F_0=\wt{\psi}_s(F_0)={R}_{(\phi_s)_*\nu_s}\circ \wt{\phi}_s (F_0)=
\wt{\phi_s}\circ {R}_{\nu_s}(F_0)=\wt{\phi_s}(F_s).\qedhere \]
\end{proof}

We remark that the Moser method for Poisson manifolds (see, for example, \cite{me:poilec}) is a special case, where $\AA=\T M,\ E=TM$ and $F_s=\on{Gr}(\pi_s)$ for a family of gauge equivalent Poisson structures.

\bibliographystyle{amsplain}

\def\cprime{$'$} \def\polhk#1{\setbox0=\hbox{#1}{\ooalign{\hidewidth
  \lower1.5ex\hbox{`}\hidewidth\crcr\unhbox0}}} \def\cprime{$'$}
  \def\cprime{$'$} \def\cprime{$'$} \def\cprime{$'$} \def\cprime{$'$}
  \def\polhk#1{\setbox0=\hbox{#1}{\ooalign{\hidewidth
  \lower1.5ex\hbox{`}\hidewidth\crcr\unhbox0}}} \def\cprime{$'$}
  \def\cprime{$'$} \def\cprime{$'$} \def\cprime{$'$} \def\cprime{$'$}
\providecommand{\bysame}{\leavevmode\hbox to3em{\hrulefill}\thinspace}
\providecommand{\MR}{\relax\ifhmode\unskip\space\fi MR }
\providecommand{\MRhref}[2]{%
  \href{http://www.ams.org/mathscinet-getitem?mr=#1}{#2}
}
\providecommand{\href}[2]{#2}

\end{document}